\documentclass[12pt,reqno]{amsart}

\usepackage{amsmath,amssymb,amsthm,amsfonts}
\usepackage{hyperref}
\usepackage{url}
\usepackage{xcolor}
\usepackage[all]{xy}
\usepackage{enumerate}

\oddsidemargin=0.4in
\evensidemargin=0.4in
\topmargin=-0.2in
\textwidth=15cm
\textheight=23cm

\theoremstyle{definition}
\newtheorem{definition}{Definition}[section]
\newtheorem{notation}[definition]{Notation}

\theoremstyle{plain}
\newtheorem{theorem}[definition]{Theorem}

\newtheorem{corollary}[definition]{Corollary}
\newtheorem{lemma}[definition]{Lemma}

\newtheorem{conjecture}[definition]{Conjecture}

\theoremstyle{remark}

\newtheorem{example}[definition]{Example}

\DeclareMathOperator{\Aut}{Aut}
\DeclareMathOperator{\Inn}{Inn}
\DeclareMathOperator{\Sym}{Sym}

\DeclareMathOperator{\GL}{GL}

\title{Some new compatible groups}

\author[Ding]{Zhaochen Ding}
\author[Verret]{Gabriel Verret}
\address{Department of Mathematics, University of Auckland, New Zealand}
\email{dzha470@aucklanduni.ac.nz,g.verret@auckland.ac.nz}

\keywords{Compatible groups}
\subjclass{Primary 	20D99}

\begin{document}

\begin{abstract}
   Two finite groups $L_1$ and $L_2$ are compatible if there exists a finite group $G$  with isomorphic normal subgroups $N_1$ and $N_2$ such that $L_1\cong G/N_1$ and $L_2\cong G/N_2$.
   We prove a new sufficient condition for two groups to be compatible. As a corollary, we obtain that nilpotent groups of the same order are compatible, and so are groups of the same square-free order.
\end{abstract}

\maketitle

\section{Introduction}

A naive but very common mistake from students first studying group theory is to assume that if a group $G$ has two isomorphic normal subgroups $N_1$ and $N_2$, then the corresponding quotients $L_1:=G/N_1$ and $L_2:= G/N_2$ are isomorphic. There are plenty of examples to show this is not the case. The natural follow-up question is whether anything can be said about the pair $(L_1,L_2)$ in this situation. It is not hard to see that if $G$ is allowed to be infinite, then there are no restrictions on $(L_1,L_2)$. On the other hand, if one restricts to the case when $G$ is finite (and thus so are $N_1$, $N_2$, $L_1$ and $L_2$), then the question becomes interesting again and is regularly raised (see for example \cite{groupoverflow,groupexchange}).

In the rest of the paper, all groups are assumed to be finite. Two groups $L_1$ and $L_2$ are \emph{compatible} if there exists a  group $G$ with isomorphic normal subgroups $N_1$ and $N_2$ such that $L_1\cong G/N_1$ and $L_2\cong G/N_2$. Despite the conceptual simplicity of this definition, determining the compatibility of groups is surprisingly challenging. More formally, there is no known algorithm which, given as input two groups, outputs their compatibility. We call this the \emph{compatibility problem} (for abstract groups). (The smallest currently open case is whether the alternating group $\operatorname{A}_4$ and the cyclic group $\operatorname{C}_{12}$ are compatible~\cite[Problem 20.21]{khukhro2024unsolvedproblemsgrouptheory}.)

The compatibility problem was originally motivated by the study of subconstituents (\cite[Chapter III]{wielandt1964finite}) of transitive permutation groups.
Call two transitive permutation groups \emph{compatible} if they arise as paired subconstituents of a transitive group. Determining which transitive groups are compatible is the \emph{compatibility problem} (for permutation groups). Contributions to this problem include \cite{cameron1972Permutation,giudici2019Arctransitive, knapp1973point, quirin1971Primitive, sims1967Graphs}.   It turns out that two permutation groups are compatible if and only if they arise as the in- and out-local action of a finite vertex-transitive digraph with respect to some arc-transitive subgroup of automorphisms (see \cite[Theorem~3.1]{giudici2019Arctransitive}), which is how we initially became interested in the problem.

Two regular permutation groups are compatible if and only if they are compatible as abstract groups
(see \cite[Corollary~3.5]{giudici2019Arctransitive}), hence the compatibility problem for abstract groups is an instance of the compatibility problem for permutation groups.  In this paper, we will focus on the former. Compatible groups clearly must have some properties in common, for example they must have the same order and the same multiset of composition factors. Less obviously, compatible groups must have \emph{compatible subnormal series} \cite[Corollary 4.4]{ding2025necessaryconditionscompatibilitygroups}, that is subnormal series whose factors are the same and in the same order (see \cite[Definition 4.3]{ding2025necessaryconditionscompatibilitygroups} for a more formal definition). It was also shown in \cite{ding2025necessaryconditionscompatibilitygroups} that if two groups that have no abelian composition factors (such groups are sometimes called anabelian groups)
are compatible, then they must have \emph{compatible normal series}~\cite[Theorem 5.6]{ding2025necessaryconditionscompatibilitygroups}, defined in an analogous way as subnormal ones.

Our work in this paper goes in the other direction. Rather than find necessary conditions satisfied by pairs of compatible groups, we construct new examples of such pairs. Our main result, Corollary~\ref{cor:compnormalseries}, is somewhat technical but it has some straightforward consequences, for example that all nilpotent groups of the same order are compatible with each other (Corollary~\ref{cor:nilpotent}) and so are groups of the same square-free order (Corollary~\ref{cor:square-free}).

Even our most general result is only able to handle groups which have compatible normal series. Based on this and the results of \cite{ding2025necessaryconditionscompatibilitygroups}, we make the following ambitious conjecture which would completely solve the compatibility problem for abstract groups.

\begin{conjecture}
Two  groups are compatible if and only if they have compatible normal series.
\end{conjecture}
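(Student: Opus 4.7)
The conjecture has two directions to establish. For the forward direction (compatibility implies compatible normal series), \cite[Theorem 5.6]{ding2025necessaryconditionscompatibilitygroups} already handles the anabelian case, so my plan is to try to extend that argument to accommodate abelian chief factors. The anabelian proof presumably exploits the fact that a chief factor of an anabelian group is a direct product of isomorphic nonabelian simple groups, giving rigid control over how it sits inside $G$ and its quotients. For general groups, an elementary abelian chief factor of $G$ can be distributed between $N_i$ and $L_i$ in many subtler ways, so one would need to match these distributions on the two sides using the isomorphism $N_1\cong N_2$ to align the relevant submodule lattices step by step along a common chief series of $G$.

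For the reverse direction, the starting point is Corollary~\ref{cor:compnormalseries}, which produces compatibility from a compatible normal series under a technical hypothesis. The plan is induction on the length of the series. Given compatible normal series $L_1 = K_0 \geq K_1 \geq \cdots \geq K_r = 1$ and $L_2 = M_0 \geq M_1 \geq \cdots \geq M_r = 1$ with $K_{i-1}/K_i \cong M_{i-1}/M_i$, the inductive hypothesis yields a witness $H$ for the compatibility of $L_1/K_{r-1}$ and $L_2/M_{r-1}$; one must then build a single extension of $H$ by an isomorphic pair of kernels whose two resulting quotients are precisely $L_1$ and $L_2$. Corollary~\ref{cor:compnormalseries} is essentially a one-step extension result of this shape, but only under extra hypotheses that I would need to remove.

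The central obstacle, and what makes the conjecture ambitious, is that this extension step is highly non-free: the cohomology classes controlling the two extensions must be matched through the chosen kernel isomorphism, and there is no a priori reason a simultaneous lift should exist in general. I expect the correct approach will require strengthening the inductive hypothesis to carry auxiliary data about $H$ — for instance, a distinguished pair of isomorphic normal subgroups together with a fixed isomorphism between them — behaving coherently with the subsequent extension. Formulating this enriched inductive statement (rather than the bare conjecture) so that both the forward matching argument and the reverse extension construction can proceed in lockstep is, I believe, the key step, and it almost certainly requires ideas beyond the single construction of this paper — which is consistent with the authors' own description of the conjecture as ambitious.
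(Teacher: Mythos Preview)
The statement is a \emph{conjecture}, not a theorem: the paper explicitly labels it as such and offers no proof. There is therefore no paper proof to compare against, and your proposal is likewise not a proof but a research outline --- which you essentially concede in your final paragraph.

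Two concrete remarks on the outline itself. For the forward direction, the known result from \cite{ding2025necessaryconditionscompatibilitygroups} gives compatible \emph{subnormal} series in general and compatible \emph{normal} series only in the anabelian case; the extension you propose to groups with abelian chief factors is itself open, and your sketch (``align the relevant submodule lattices step by step'') does not supply a mechanism beyond naming the difficulty. For the reverse direction, Example~\ref{ex:counterexample} in the paper exhibits a pair with compatible normal series for which the hypothesis of Corollary~\ref{cor:compnormalseries} provably fails, so the extra hypothesis cannot be removed by refining the present construction; something genuinely new is required, as the authors themselves note. Your plan to strengthen the inductive hypothesis with ``auxiliary data'' is reasonable as a direction, but no candidate for that data is proposed.

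In short, there is no gap to name in the usual sense because no proof has been claimed. What you have written is an accurate summary of why the conjecture is open, not an attempt to close it.
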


This paper is organised as follows. 
In Section~\ref{sec:prelim}, we give some preliminary results on posets, inverse systems and wreath products. 
While many of these results are well-established, we include proofs for those that are not standard. In Section~\ref{sec:inverse}, we study inverse systems of finite groups over finite in-forest posets.
In Section~\ref{sec:hybridwreath}, we introduce the concept of the hybrid wreath product, a generalisation of the wreath product. 
These results will then be used in Section~\ref{sec:mainresult} to prove our main results. Because these proofs our somewhat technical, we give a brief sketch of the ideas involved in
Section~\ref{sec:appendix}. Finally, Section~\ref{sec:mainresult} contains the proof of our main result, Corollary~\ref{cor:compnormalseries}, which proves that two groups admitting compatible normal series with additional properties must be compatible. As a consequence, we obtain that two nilpotent groups of the same order are compatible (Corollary~\ref{cor:nilpotent}), and two groups of the same square-free order are compatible 
(Corollaries~\ref{cor:square-free}). We also provide an example (Example~\ref{ex:counterexample}) to show that Corollary~\ref{cor:compnormalseries} cannot be applied to all groups that have compatible normal series.

\section{Preliminaries}
\label{sec:prelim}

\subsection{Forest posets}

We refer to \cite{schroder2016Ordered} for standard terms in poset theory.
Let $(I,\le)$ be a poset and let $i,j\in I$.
If $\{x\in I\mid i\le x\le j\}=\{i,j\}$, then $i$ is a \emph{lower cover} of $j$.
We denote by $i\wedge j$ the greatest element of $\{x\in I\mid x\le i,x\le j\}$, if it exists.
\begin{definition}
    A \emph{graded poset} is a poset $(I,\le)$ for which there exists a function $\rho:I\rightarrow\mathbb{N}$
    such that
    \begin{itemize}
        \item [\rm (i)] $\rho(i)\le\rho(j)$ if $i\le j$; and,
        \item [\rm (ii)] $\rho(i)+1=\rho(j)$ if $i$ is a lower cover of $j$.
    \end{itemize}
    The function $\rho$ is called a \emph{rank function} of $(I,\le)$.
\end{definition}

\begin{definition}
    Let $(I,\le)$ be a poset.
    For $i\in I$, we denote the set $\{x\in I\mid x\le i\}$ as $\downarrow i$.
    We say that $I$ is an \emph{in-forest poset} if
    for every $i\in I$, we have that $\downarrow i$ is a chain.
\end{definition}

\begin{lemma}\label{lem:partition}
    Let $(I,\le)$ be a finite in-forest poset and
    let $\rho:I\rightarrow \mathbb{N}$ be the function given by
    $\rho(i):=|\downarrow i\ |-1$.
    Then $\rho$ is a rank function of $(I,\le)$.
    In particular, $(I,\le)$ is graded.
\end{lemma}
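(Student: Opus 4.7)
The plan is to verify directly that $\rho(i) := |\downarrow i| - 1$ satisfies the two axioms (i) and (ii) of a rank function, using the defining property of an in-forest poset that each $\downarrow i$ is a chain. The argument is short enough that I expect no serious obstacle; the only subtlety is in condition (ii), where the chain hypothesis must be invoked.

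For condition (i), I would argue that if $i \le j$, then by transitivity $\downarrow i \subseteq \downarrow j$, so $|\downarrow i| \le |\downarrow j|$ and hence $\rho(i) \le \rho(j)$. This step uses no hypothesis beyond the poset axioms, so it is routine.

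The main content is condition (ii). Assume $i$ is a lower cover of $j$. I would show $\downarrow j = (\downarrow i) \sqcup \{j\}$, which immediately gives $\rho(j) = \rho(i) + 1$. The inclusion $(\downarrow i) \cup \{j\} \subseteq \downarrow j$ is immediate since $i \le j$. For the reverse inclusion, take $x \in \downarrow j$. Because $(I,\le)$ is an in-forest poset, $\downarrow j$ is a chain, so $x$ and $i$ (both in $\downarrow j$) are comparable. If $x \le i$, then $x \in \downarrow i$. Otherwise $i \le x \le j$, and the lower cover hypothesis forces $x \in \{i,j\}$, which still places $x$ in $(\downarrow i) \cup \{j\}$. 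The union is disjoint because $j \not\le i$ (otherwise $i = j$, contradicting $i < j$), so $|\downarrow j| = |\downarrow i| + 1$.

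The only step requiring the in-forest hypothesis is the use of comparability of $x$ and $i$ inside $\downarrow j$; without it, one could have an element $x \le j$ incomparable to $i$, and the argument would fail. Since both conditions are checked and $\rho$ takes values in $\mathbb{N}$, this shows $\rho$ is a rank function and hence $(I,\le)$ is graded.
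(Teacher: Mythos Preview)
Your argument is correct and follows essentially the same approach as the paper's proof: both verify condition (i) via $\downarrow i \subseteq \downarrow j$ and condition (ii) by showing $\downarrow j = (\downarrow i) \cup \{j\}$ using the chain property of $\downarrow j$. Your version simply spells out in more detail the case analysis that the paper compresses into a single sentence.
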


\begin{proof}
    Let $i,j\in I$ be such that $i\le j$.
    Then $\downarrow i\subseteq \downarrow j$.
    Therefore, $\rho(i)\le \rho(j)$.
    Now suppose further that $i$ is a lower cover of $j$.
    Since both $\downarrow i$ and $\downarrow j$ are chains,
    we have that $\downarrow j=\downarrow i\ \cup\{j\}$.
    Therefore, $\rho(j)=\rho(i)+1$.
    This completes the proof.
\end{proof}

\begin{lemma}\label{lem:existenceofmeet}
    Let $(I,\le)$ be a finite in-forest poset and 
    let $i, j,k\in I$ such that $i\le j$.
    We have that
    $j\wedge k$ exists if and only if $i\wedge k$ exists.
        Moreover, in that case, either $i\wedge k=i$ or $i\wedge k=j\wedge k$.
\end{lemma}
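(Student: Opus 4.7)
The plan is to work everywhere with the principal down-sets $\downarrow x$, using the defining property that each of them is a chain. The first observation I would record is that in a finite in-forest poset, $x\wedge y$ exists if and only if $\downarrow x\cap\downarrow y\neq\emptyset$: this intersection is contained in the chain $\downarrow x$, hence is itself a finite chain, and so has a greatest element precisely when it is nonempty.

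With that reformulation, the "only if" direction is immediate: since $i\le j$, we have $\downarrow i\subseteq\downarrow j$, hence $\downarrow i\cap\downarrow k\subseteq\downarrow j\cap\downarrow k$, so existence of $i\wedge k$ forces existence of $j\wedge k$.

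For the converse and for the "moreover" clause, I would carry out the following single case analysis. Assume $j\wedge k$ exists and set $m:=j\wedge k$. Then $m,i\in\downarrow j$, and since $\downarrow j$ is a chain we must have either $m\le i$ or $i\le m$.
\begin{itemize}
\item If $m\le i$, then $m\in\downarrow i\cap\downarrow k$, so this intersection is nonempty and $i\wedge k$ exists. Moreover, $m$ is a lower bound for $i$ and $k$, hence $m\le i\wedge k$; combined with the general inequality $i\wedge k\le j\wedge k=m$ (which holds because any lower bound of $i$ and $k$ is a lower bound of $j$ and $k$), this gives $i\wedge k=m=j\wedge k$.
\item If $i\le m$, then $i\le m\le k$, so $i\in\downarrow i\cap\downarrow k$, showing existence of $i\wedge k$, and in this case $i$ is itself the greatest lower bound of $i$ and $k$, giving $i\wedge k=i$.
\end{itemize}
Either way $i\wedge k$ exists and equals $i$ or $j\wedge k$, completing the proof.

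There is no serious obstacle here; the only thing to be careful about is to justify once and for all that existence of a meet in this setting reduces to nonemptiness of the intersection of down-sets, so that the two claims of the lemma can both be read off from the single chain dichotomy between $m$ and $i$ inside $\downarrow j$.
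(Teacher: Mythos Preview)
Your argument is correct and follows essentially the same route as the paper's proof: both reduce existence of meets to nonemptiness of $\downarrow x\cap\downarrow y$, then use that $i$ and $m=j\wedge k$ lie in the chain $\downarrow j$ to split into the two cases $i\le m$ and $m\le i$, deducing $i\wedge k=i$ and $i\wedge k=j\wedge k$ respectively. The only cosmetic slip is that you have the ``if'' and ``only if'' labels swapped relative to the statement as written, but both implications are proved.
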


\begin{proof}
    Suppose that $j\wedge k$ exists. 
    Then $j\wedge k\in \downarrow j$ and $i\in \downarrow j$.
    As $\downarrow j$ is a chain, either $i\le j\wedge k$ or $i\ge j\wedge k$.
    If $i\le j\wedge k$, then $i\le k$ which implies that $i\wedge k$ exists and $i\wedge k=i$.
    If $i\ge j\wedge k$, then $j\wedge k\in\downarrow i\ \cap\downarrow k$.
    Therefore, $\downarrow i\ \cap\downarrow k$ is a 
    non-empty finite chain, so it has a greatest element $i\wedge k$. 
    As $i\le j$, we have $i\wedge k\le j\wedge k$,
    but $i\ge j\wedge k$ also implies that $i\wedge k\ge j\wedge k$.
    Hence $i\wedge k=j\wedge k$ in this case.
    In the other direction, suppose that $i\wedge k$ exists.
    Then $i\wedge k\in \downarrow j\ \cap\downarrow k$.
    Therefore, $\downarrow j\ \cap\downarrow k$ is a non-empty finite chain, which implies that $j\wedge k$ exists. This completes the proof.
\end{proof}

\subsection{Inverse systems}

Throughout this paper, we denote the category of sets as $\mathbf{Set}$,
the category of groups as $\mathbf{Grp}$, and the category of finite groups as $\mathbf{FinGrp}$.
When we discuss categories, we always mean one of these three categories.

If $(X,(p_i)_{i\in I})$ is an inverse limit of the inverse system $\mathcal{X}=((X_i)_{i\in I}, (f_{ij})_{i\le j\in I})$,
we will write $\varprojlim_{i\in I}(\mathcal{X})=(X,(p_i)_{i\in I})$.
For convenience, we often omit the poset if it is clear from context, for example, we might write $((X_i),(f_{ij}))$ instead of $((X_i)_{i\in I}, (f_{ij})_{i\le j\in I})$, and
 $\varprojlim(\mathcal{X})=(X,(p_i))$ or even $\varprojlim(\mathcal{X})=X$ instead of $\varprojlim_{i\in I}(\mathcal{X})\\=(X,(p_i)_{i\in I})$.

By the following theorem, inverse limits of inverse systems in $\mathbf{Grp}$ or $\mathbf{Set}$ always exist.
This theorem is standard.

\begin{theorem}[{\cite[Example 5.1.22 and 5.1.23]{Leinster_2014}}]\label{thm:limitexists}
    Let $(I,\le)$ be a poset and let $\mathcal{X}:=((X_i),(f_{ij}))$ be an inverse system in $\mathbf{Grp}$ or $\mathbf{Set}$ over $I$.
    Let 
    \[X:=\left\{(x_i) \in \prod_{i \in I} X_{i} \mid x_{i}=f_{i j}(x_j) \text { for all } i \leq j \text { in } I\right\}\]
    and for every $i\in I$, let 
    $p_i:X\rightarrow X_i$ be the natural projection (given by $p_i((x_i))=x_i$).
    Then $(X,(p_i))$ is a inverse limit of $\mathcal{X}$.
\end{theorem}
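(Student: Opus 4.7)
The plan is to verify directly that the pair $(X,(p_i))$ constructed from the product satisfies the universal property of the inverse limit. I would split the argument into four steps.

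First, I would check that $X$ is a legitimate object of the category. In $\mathbf{Set}$ this is immediate since $X$ is defined as a subset of $\prod_{i\in I} X_i$. In $\mathbf{Grp}$ one has to verify that $X$ is a subgroup of the direct product: if $(x_i),(y_i)\in X$ and $i\le j$, then because $f_{ij}$ is a group homomorphism,
\[
f_{ij}(x_jy_j^{-1})=f_{ij}(x_j)f_{ij}(y_j)^{-1}=x_iy_i^{-1},
\]
so $(x_iy_i^{-1})\in X$, and the identity $(e_i)$ lies in $X$ since each $f_{ij}$ preserves the identity.

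Second, I would confirm that each $p_i$ is a morphism in the category (it is the restriction of the canonical projection from $\prod X_k$), and that the required cone condition $f_{ij}\circ p_j=p_i$ holds: for $(x_k)\in X$ with $i\le j$, by definition of $X$ we have $f_{ij}(p_j((x_k)))=f_{ij}(x_j)=x_i=p_i((x_k))$.

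Third, and this is the heart of the argument, I would establish the universal property. Let $(Y,(q_i))$ be any cone over $\mathcal{X}$, i.e.\ $q_i:Y\to X_i$ with $f_{ij}\circ q_j=q_i$ whenever $i\le j$. Define $u:Y\to \prod_{i\in I} X_i$ by $u(y):=(q_i(y))_{i\in I}$. The cone condition on $(q_i)$ guarantees $u(y)\in X$, and $u$ is a morphism in the category because it is a morphism into each coordinate (this is where the group case uses that each $q_i$ is a homomorphism). By construction $p_i\circ u=q_i$ for every $i$. For uniqueness, any morphism $u':Y\to X$ satisfying $p_i\circ u'=q_i$ must send $y$ to the tuple whose $i$-th coordinate is $q_i(y)$, i.e.\ $u'=u$.

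I do not expect any serious obstacle: the construction is literally the equaliser inside a product, and both $\mathbf{Set}$ and $\mathbf{Grp}$ admit products and equalisers computed in the obvious pointwise way. The only mild care needed is the verification that $X$ is closed under the group operations in the $\mathbf{Grp}$ case, and that the canonically defined map $u$ into the product actually lands in $X$; both reduce immediately to the cone conditions and the fact that the transition maps $f_{ij}$ are homomorphisms.
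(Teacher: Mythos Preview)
Your proposal is correct and is exactly the standard verification one expects. Note, however, that the paper does not supply its own proof of this statement: it is quoted as \cite[Example~5.1.22 and~5.1.23]{Leinster_2014} and introduced with ``This theorem is standard,'' so there is nothing in the paper to compare your argument against. (A tiny cosmetic point: you announce four steps but only carry out three.)
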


The following corollary is a consequence of Theorem \ref{thm:limitexists},
which plays a fundamental role in our analysis.

\begin{corollary}\label{cor:fininverse limitexists}
    The inverse limit of an inverse system in $\mathbf{FinGrp}$ over a finite poset exists.
\end{corollary}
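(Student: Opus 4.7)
The plan is to reduce to Theorem~\ref{thm:limitexists} applied in $\mathbf{Grp}$, and then to verify that the resulting inverse limit both lives in $\mathbf{FinGrp}$ and satisfies the universal property when tested against objects of $\mathbf{FinGrp}$. Since $\mathbf{FinGrp}$ is a full subcategory of $\mathbf{Grp}$, any inverse system $\mathcal{X}=((X_i),(f_{ij}))$ in $\mathbf{FinGrp}$ over a finite poset $I$ is also an inverse system in $\mathbf{Grp}$, so Theorem~\ref{thm:limitexists} furnishes an inverse limit $(X,(p_i))$ in $\mathbf{Grp}$ with the explicit description
\[X=\Bigl\{(x_i)\in\prod_{i\in I}X_i \;\Big|\; x_i=f_{ij}(x_j)\text{ for all }i\le j\Bigr\}.\]

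First, I would observe that $X$ is finite. Indeed, $I$ is finite and each $X_i$ is finite, so the ambient product $\prod_{i\in I}X_i$ is a finite set; hence its subgroup $X$ is a finite group, i.e.\ an object of $\mathbf{FinGrp}$. The projections $p_i\colon X\to X_i$ are morphisms in $\mathbf{FinGrp}$ since both source and target are finite.

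Next, I would check the universal property inside $\mathbf{FinGrp}$. Suppose $Y\in\mathbf{FinGrp}$ is equipped with a compatible cone $(q_i\colon Y\to X_i)_{i\in I}$, meaning $f_{ij}\circ q_j=q_i$ for all $i\le j$. Viewing this cone in $\mathbf{Grp}$ and applying the universal property of $(X,(p_i))$ there, we obtain a unique group homomorphism $\varphi\colon Y\to X$ with $p_i\circ\varphi=q_i$ for every $i\in I$. Since $Y$ and $X$ are both finite, $\varphi$ is automatically a morphism in $\mathbf{FinGrp}$, and uniqueness in $\mathbf{FinGrp}$ follows from uniqueness in $\mathbf{Grp}$. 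Therefore $(X,(p_i))$ is an inverse limit of $\mathcal{X}$ in $\mathbf{FinGrp}$.

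There is essentially no obstacle here: the only content is the elementary observation that a finite product of finite sets is finite, combined with the fact that $\mathbf{FinGrp}$ is a full subcategory of $\mathbf{Grp}$, which allows the universal property to transfer without modification. The argument works verbatim for $\mathbf{Set}$ as well, though only the group case is needed in the sequel.
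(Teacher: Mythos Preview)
Your proof is correct and follows essentially the same approach as the paper: both invoke Theorem~\ref{thm:limitexists} to build the explicit subgroup of the product, note that finiteness of $I$ and of each $X_i$ forces $X$ to be finite, and then conclude that the $\mathbf{Grp}$-limit serves as a $\mathbf{FinGrp}$-limit. The only difference is that you spell out the full-subcategory argument for transferring the universal property, whereas the paper simply asserts it in one line.
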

\begin{proof}
    Let $(I,\le)$ be a finite poset and let $\mathcal{X}:=((X_i),(f_{ij}))$ be an inverse system in $\mathbf{FinGrp}$.
    Let \[X:=\left\{(x_i) \in \prod_{i \in I} X_{i} \mid x_{i}=f_{i j}(x_j) \text { for all } i \leq j \text { in } I\right\}\]
    equipped with natural projection $p_i:X\rightarrow X_i$
    for every $i\in I$.
    Since $I$ is finite and $X_i$ is also finite for every $i\in I$,
    we have that $X$ is a finite group.
    By Theorem \ref{thm:limitexists}, $(X,(p_i))$ is an inverse limit of $\mathcal{X}$ in $\mathbf{Grp}$.
    Therefore, it is also an inverse limit of $\mathcal{X}$ in $\mathbf{FinGrp}$.
\end{proof}

\begin{lemma}[{\cite[Proposition 6.1.4]{Leinster_2014}}]\label{lem:limiso}
    Let $(I,\le)$ be a poset, let $\mathbf{C}$ be a category,
    and let $\mathcal{X}$ and $\mathcal{Y}$ be inverse systems in $\mathbf{C}$ over $I$.
    Let $\Phi:=(\phi_i):\mathcal{X}\rightarrow\mathcal{Y}$ be morphisms. 
    Assume inverse limits of $\mathcal{X}$ and $\mathcal{Y}$ exist. 
    If $\phi_i$ is an isomorphism for every $i\in I$, then $\varprojlim(\Phi)$ is an isomorphism.
\end{lemma}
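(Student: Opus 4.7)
The plan is to exhibit an explicit inverse for $\varprojlim(\Phi)$, built from the componentwise inverses $\phi_i^{-1}$, and then conclude via the functoriality of $\varprojlim$ (equivalently, via the uniqueness clause in the universal property of the inverse limit).

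First, I would verify that the family $\Phi^{-1}:=(\phi_i^{-1})_{i\in I}$ is a morphism of inverse systems from $\mathcal{Y}$ to $\mathcal{X}$. Writing $f_{ij}$ and $g_{ij}$ for the bonding maps of $\mathcal{X}$ and $\mathcal{Y}$ respectively, the assumption that $\Phi$ is a morphism says $\phi_i\circ f_{ij}=g_{ij}\circ\phi_j$ for all $i\le j$. Pre-composing with $\phi_j^{-1}$ and post-composing with $\phi_i^{-1}$ on the appropriate sides yields $f_{ij}\circ\phi_j^{-1}=\phi_i^{-1}\circ g_{ij}$, which is exactly the compatibility condition for $\Phi^{-1}$ to be a morphism $\mathcal{Y}\to\mathcal{X}$. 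This is the only non-formal computation in the whole argument, and it should be expected to be the main (very mild) obstacle.

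Next, since inverse limits of both $\mathcal{X}$ and $\mathcal{Y}$ are assumed to exist, the universal property produces unique morphisms $\varprojlim(\Phi):\varprojlim(\mathcal{X})\to\varprojlim(\mathcal{Y})$ and $\varprojlim(\Phi^{-1}):\varprojlim(\mathcal{Y})\to\varprojlim(\mathcal{X})$ compatible with the limit projections $p_i$ and $q_i$, in the sense that $q_i\circ\varprojlim(\Phi)=\phi_i\circ p_i$ and $p_i\circ\varprojlim(\Phi^{-1})=\phi_i^{-1}\circ q_i$ for every $i\in I$.

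Finally, I would show $\varprojlim(\Phi^{-1})\circ\varprojlim(\Phi)=\mathrm{id}_{\varprojlim(\mathcal{X})}$ and symmetrically the other direction. Composing the two identities above gives $p_i\circ\varprojlim(\Phi^{-1})\circ\varprojlim(\Phi)=\phi_i^{-1}\circ\phi_i\circ p_i=p_i$ for every $i\in I$, so both $\varprojlim(\Phi^{-1})\circ\varprojlim(\Phi)$ and $\mathrm{id}_{\varprojlim(\mathcal{X})}$ are morphisms into the cone $(\varprojlim(\mathcal{X}),(p_i))$ satisfying the same universal factorisation; by the uniqueness part of the universal property they coincide. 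The symmetric argument gives the other composition, so $\varprojlim(\Phi)$ is an isomorphism, as required.
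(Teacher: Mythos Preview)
Your proof is correct and is exactly the standard argument: build $\Phi^{-1}=(\phi_i^{-1})$, check it is a morphism of inverse systems, and use the uniqueness clause of the universal property to identify both composites with the identity. The paper does not give its own proof of this lemma; it simply cites \cite[Proposition 6.1.4]{Leinster_2014}, where the same functoriality-plus-uniqueness argument appears.
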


\subsection{Wreath products}

\begin{notation}
    If $G$ is a group, 
    then $\Inn_G:G\rightarrow \Aut(G)$ is 
    defined as $\Inn_G(g)(h):=ghg^{-1}$ for
    every $g,h\in G$.
    We omit the subscript of $\Inn_G$
    if there is no ambiguity.
\end{notation}

The standard form of the Krasner-Kaloujnine universal embedding theorem can be found in \cite[Theorem 2.6A]{dixon1996Permutation}.
In this paper, we give a slightly more general version of the universal embedding theorem, see Theorem \ref{thm:uet}.
Before that, we set some notation and definitions.

    Let $\Omega$ be a set on which permutations act on the right.
    Given a (right) group action 
    $\rho:G\rightarrow \operatorname{Sym}(\Omega)$, $g\in G$ and $\omega\in\Omega$,
    we denote the action of $\rho(g)$ on $\omega$ by $\omega^{\rho(g)}$, i.e.,
    \[\omega^{\rho(g)}:=(\omega)\rho(g).\]
    We usually omit $\rho$ if there is no ambiguity,
    that is, 
    we might write $\omega^g$ instead of $\omega^{\rho(g)}$ or
    write `let $G$ act on $\Omega$' without giving a name to this action.

    Let $G$ and $H$ be groups and let $H$ act on $\Omega$.
    Define an action of $H$ on $G^\Omega$
    as:
    \[f^h(\omega):=f(\omega^{h^{-1}})\]
    for every $f\in G^\Omega$, $h\in H$ and $\omega\in\Omega$.
    The semi-direct product $G^\Omega\rtimes H$ with respect to
    this action is called the \emph{wreath product} of $G$ by $H$,
    denoted by $G\wr_\Omega H$.
    The subgroup $G^\Omega\rtimes 1$ is called the \emph{base subgroup} of this wreath product.
    We often identify this base subgroup with $G^\Omega$.

    \begin{definition}\label{def:universalembedding}
    Let $\rho:G\rightarrow \operatorname{Sym}(\Omega)$ be a transitive group action of $G$ on $\Omega$.
    Fix $\omega\in\Omega$. 
    For every $\nu\in\Omega$, let $t_\nu$ be an element of $G$ such that
    $\nu=\omega^{t_\nu}$. Without loss of generality, we set $t_\omega=1$.
    We call $(t_\nu)_{\nu\in\Omega}$ a \emph{permutation transversal} with respect to $\omega$.
    Let $H:=G_\omega$, let $S:=\rho(G)$ and
    for $g\in G$ define $f_g\in H^\Omega$ via $f_g(\nu):=t_\nu gt^{-1}_{\nu^g}$ for every $\nu\in\Omega$.
    Define a map 
    \[\begin{aligned}
        \iota:G&\rightarrow H\wr_\Omega S\\
        g&\mapsto (f_g,\rho(g)).
    \end{aligned}\]
    The map $\iota$ is called the \emph{standard embedding} of $G$ with respect to $\omega$ and $(t_\nu)_{\nu\in\Omega}$.
    \end{definition}

\begin{theorem}[Universal embedding theorem]\label{thm:uet}
    A standard embedding is an injective homomorphism.
\end{theorem}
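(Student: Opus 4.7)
The plan is to verify three things in order: that $f_g$ actually takes values in $H=G_\omega$, that $\iota$ is multiplicative, and that $\iota$ is injective. Each reduces to a short transversal calculation, and only the bookkeeping of conventions needs care.

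First I would check that $f_g(\nu) \in H$ for every $\nu\in\Omega$. Applying the defining relation $\omega^{t_\mu}=\mu$ twice yields
\[\omega^{f_g(\nu)} = \omega^{t_\nu g t_{\nu^g}^{-1}} = \nu^{g\, t_{\nu^g}^{-1}} = (\nu^g)^{t_{\nu^g}^{-1}} = \omega,\]
so $f_g$ lies in $H^\Omega$ and $(f_g,\rho(g))$ is a genuine element of $H\wr_\Omega S$.

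To see that $\iota$ is a homomorphism, I would expand the wreath product multiplication using the action $f^h(\nu)=f(\nu^{h^{-1}})$. This reduces the identity $\iota(g_1 g_2)=\iota(g_1)\iota(g_2)$ to the pointwise equality $f_{g_1g_2}(\nu)=f_{g_1}(\nu)\cdot f_{g_2}(\nu^{g_1})$ for every $\nu\in\Omega$, the second coordinate being $\rho(g_1 g_2)=\rho(g_1)\rho(g_2)$ automatically. The pointwise identity is immediate by telescoping:
\[f_{g_1}(\nu)\cdot f_{g_2}(\nu^{g_1}) = t_\nu g_1 t_{\nu^{g_1}}^{-1}\cdot t_{\nu^{g_1}}g_2 t_{\nu^{g_1 g_2}}^{-1} = t_\nu (g_1 g_2) t_{\nu^{g_1 g_2}}^{-1} = f_{g_1 g_2}(\nu).\]
For injectivity, suppose $\iota(g)=(1,1)$. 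Then $\rho(g)=1$, so in particular $\omega^g=\omega$, and evaluating $f_g$ at $\omega$ using the normalisation $t_\omega=1$ gives $f_g(\omega)=t_\omega g t_{\omega^g}^{-1}=g$, hence $g=1$.

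The main hurdle is not mathematical depth but notational consistency: one must deduce, from the convention $f^h(\nu)=f(\nu^{h^{-1}})$ in Definition~\ref{def:universalembedding}, exactly which shifted evaluation $f_{g_2}(\nu^{g_1})$ (as opposed to $f_{g_2}(\nu^{g_1^{-1}})$) appears when expanding the product, and keep the orientation of the transversal factors $t_\nu$ and $t_{\nu^g}^{-1}$ consistent throughout. Once those are aligned, each of the three steps collapses to a single-line identity.
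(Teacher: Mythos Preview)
Your proof is correct in every detail. The paper does not actually write out a proof of this theorem; it simply states that one can easily adapt the proof of the usual universal embedding theorem (as in, e.g., \cite[Theorem 2.6A]{dixon1996Permutation}). Your argument \emph{is} precisely that standard transversal calculation, carried out carefully with the paper's conventions, so there is nothing to compare.
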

One can easily prove this theorem by adapting the the proof of the usual universal embedding theorem.

Now we discuss the uniqueness of the standard embedding and the wreath product of homomorphisms.

\begin{lemma}\label{lem:uniqueembedding}
    Let $\rho:G\rightarrow \operatorname{Sym}(\Omega)$ be a transitive group action of $G$ on $\Omega$.
    Fix $\omega\in\Omega$. 
    Let $(t_\nu)_{\nu\in\Omega}$ and $(s_\nu)_{\nu\in\Omega}$ be permutation transversals with respect to $\omega$.
    Let $\iota$ be the standard embedding of $G$ with respect to $\omega$ and $(t_\nu)_{\nu\in\Omega}$,
    and let $\lambda$ be the standard embedding of $G$ with respect to 
    $\omega$ and $(s_\nu)_{\nu\in\Omega}$.
    There exists $f\in H^\Omega$ such that $\lambda=\Inn_{H\wr_\Omega \rho(G)}(f)\circ\iota$.
\end{lemma}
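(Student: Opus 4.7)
The plan is to guess an explicit element $f\in H^\Omega$ in terms of the two permutation transversals and then verify the desired identity by a direct computation in the wreath product. The natural candidate, and the one I would try first, is
\[f(\nu):=s_\nu t_\nu^{-1}\quad\text{for every }\nu\in\Omega.\]

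First, I would check that $f$ really takes values in $H=G_\omega$. Since $\omega^{\rho(s_\nu)}=\nu=\omega^{\rho(t_\nu)}$, the element $s_\nu t_\nu^{-1}$ fixes $\omega$, so $f(\nu)\in H$ as required. This makes $(f,1)$ a well-defined element of $H\wr_\Omega \rho(G)$ whose conjugation action I can now analyse.

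Next, I would compute $\Inn_{H\wr_\Omega \rho(G)}(f)(\iota(g))=(f,1)(f_g,\rho(g))(f^{-1},1)$ using the multiplication in the wreath product, which is forced by the fact that the standard embedding is a homomorphism. A direct expansion shows that the top part of the result is $\rho(g)$ (agreeing with $\lambda(g)$), while the base part evaluated at $\nu\in\Omega$ equals $f(\nu)\cdot f_g(\nu)\cdot f(\nu^g)^{-1}$. Substituting the definitions gives
\[s_\nu t_\nu^{-1}\cdot t_\nu g t_{\nu^g}^{-1}\cdot t_{\nu^g}s_{\nu^g}^{-1}=s_\nu g s_{\nu^g}^{-1},\]
which is exactly the value of the base part of $\lambda(g)$ at $\nu$. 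Hence $\lambda(g)=\Inn_{H\wr_\Omega \rho(G)}(f)(\iota(g))$ for every $g\in G$, which is the desired identity.

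The main obstacle is really just the bookkeeping: one must be careful about whether the formula $\phi^h(\nu)=\phi(\nu^{h^{-1}})$ defines a left or a right action and consequently how conjugation in the wreath product expands into a product of base elements indexed by shifted points. Once those conventions are pinned down, the candidate $f(\nu)=s_\nu t_\nu^{-1}$ is essentially forced by the requirement that its initial $t_\nu^{-1}$ cancel the leading $t_\nu$ of $f_g(\nu)$, and that the trailing $t_{\nu^g}^{-1}$ be absorbed by $f(\nu^g)^{-1}$; after that, the verification is pure telescoping.
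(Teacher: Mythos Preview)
Your proof is correct and essentially identical to the paper's: you choose the same element $f(\nu)=s_\nu t_\nu^{-1}$, verify it lands in $H$, and carry out the same telescoping conjugation computation in the wreath product to obtain $s_\nu g s_{\nu^g}^{-1}$.
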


\begin{proof}
    Let $f\in H^\Omega$ defined as $f(\nu):=s_\nu t_\nu^{-1}$.
    Note that $s_\nu t_\nu^{-1}\in H$ for every $\nu\in\Omega$ so $f$ is well-defined.
    Let $g\in G$.
    We show that $\Inn_{H\wr_\Omega \rho(G)}(f)\circ\iota(g)=\lambda(g)$.
    Write $\iota(g)=(f_g,\rho(g))$ and $\lambda(g)=(h_g,\rho(g))$.
    We have
    \[(\Inn_{H\wr_\Omega \rho(G)}(f)\circ \iota)(g)=(f,1)(f_g,\rho(g))(f^{-1},1)=(f f_g(f^{-1})^{\rho(g)^{-1}},\rho(g)).\]
    For every $\nu\in\Omega$,
    we have \[\begin{aligned}
        f f_g(f^{-1})^{\rho(g)^{-1}}(\nu)&=(s_\nu t_\nu^{-1})(t_\nu g t_{\nu^g}^{-1})(t_{\nu^g}s_{\nu^g}^{-1})\\
        &=s_\nu g s_{\nu^g}^{-1}\\
        &=h_g(\nu)
    \end{aligned}\]
    It follows that $f f_g(f^{-1})^{\rho(g)^{-1}}=h_g$.
    Therefore, $\lambda=\Inn_{H\wr_\Omega \rho(G)}(f)\circ\iota$.
\end{proof}

Lemma \ref{lem:uniqueembedding} indicates that the standard embedding of a group $G$ 
with respect to a given point is unique up to conjugation.

Now we introduce the wreath product of homomorphisms.
    Let $H$ and $E$ be groups, let $\rho:H\rightarrow\Sym(\Omega)$
    be a group action of $H$ on $\Omega$, 
    and let $\kappa:E\rightarrow\Sym(\Gamma)$
    be a group action of $E$ on $\Gamma$.
    If there exist a bijection $\phi:\Omega\rightarrow \Gamma$ and an isomorphism $\psi:H\rightarrow E$
    such that for every $\omega\in\Omega$ and every $h\in H$, 
    we have $\phi(\omega^{\rho(h)})=\phi(\omega)^{\kappa(\psi(h))}$, then
    the actions $\rho$ and $\kappa$ are \emph{isomorphic} and
    $(\phi,\psi)$ is an \emph{action isomorphism} from $\rho$ to $\kappa$.

\begin{definition}\label{def:wrofhomo}
    Let $H$, $E$, $G$, and $K$ be groups,
    let $\rho:H\rightarrow\Sym(\Omega)$
    be a group action of $H$ on $\Omega$, and 
    let $\kappa:E\rightarrow\Sym(\Gamma)$
    be a group action of $E$ on $\Gamma$.
    Let $(\phi:\Omega\rightarrow\Gamma,\psi:H\rightarrow E)$ 
    be an action isomorphism from $\rho$ to $\kappa$.
    Let $\eta:G\rightarrow K$ be a homomorphism.
    Define a map 
    \[\begin{aligned}
        \eta\wr_\phi\psi:G\wr_\Omega H&\rightarrow K\wr_{\Gamma}E\\
        (f,h)&\mapsto (\eta\circ f\circ\phi^{-1},\psi(h)).
    \end{aligned}
    \]
    We call $\eta\wr_\phi\psi$ the \emph{wreath product} of $\eta$ by $(\phi,\psi)$.
\end{definition}

\begin{lemma}\label{lem:wrhomo}
    Following the notation in Definition \ref{def:wrofhomo},
    we have that
    \begin{itemize}
        \item [\rm (i)] $\eta\wr_\phi\psi$ is a homomorphism, and
        \item [\rm (ii)] $\eta\wr_\phi\psi$ is surjective
        if and only if $\eta$ is surjective.
    \end{itemize}
\end{lemma}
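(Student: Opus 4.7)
The plan is to verify each claim by a direct calculation. Recall that, with the action convention $f^h(\omega)=f(\omega^{h^{-1}})$, the multiplication in $G\wr_\Omega H$ is $(f_1,h_1)(f_2,h_2)=(f_1\cdot f_2^{h_1},h_1h_2)$, and similarly for $K\wr_\Gamma E$.

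For part (i), I would compute $(\eta\wr_\phi\psi)\bigl((f_1,h_1)(f_2,h_2)\bigr)$ and the product $(\eta\wr_\phi\psi)(f_1,h_1)\cdot(\eta\wr_\phi\psi)(f_2,h_2)$ separately. The second coordinates agree immediately because $\psi$ is a homomorphism. Evaluating the first coordinates at a point $\gamma\in\Gamma$ and using that $\eta$ is a homomorphism, the required equality reduces to the single identity
\[\phi^{-1}(\gamma)^{h_1^{-1}}=\phi^{-1}(\gamma^{\psi(h_1)^{-1}}),\]
which follows by applying $\phi$ to both sides and invoking the action-isomorphism relation $\phi(\omega^h)=\phi(\omega)^{\psi(h)}$ with $\omega=\phi^{-1}(\gamma)$ and $h=h_1^{-1}$.

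For part (ii), one direction is essentially trivial. If $\eta\wr_\phi\psi$ is surjective, fix $\gamma_0\in\Gamma$; given any $k\in K$, consider the element of $K\wr_\Gamma E$ whose second coordinate is $1$ and whose first coordinate sends $\gamma_0$ to $k$ and every other point to $1$. Any preimage $(g,h)\in G\wr_\Omega H$ forces $\psi(h)=1$ (hence $h=1$) and $\eta(g(\phi^{-1}(\gamma_0)))=k$, so $k$ lies in the image of $\eta$. Conversely, assume $\eta$ is surjective and take $(f',e)\in K\wr_\Gamma E$. Pick $h\in H$ with $\psi(h)=e$ (possible since $\psi$ is an isomorphism), and for each $\omega\in\Omega$ choose $g(\omega)\in G$ with $\eta(g(\omega))=f'(\phi(\omega))$ (possible by surjectivity of $\eta$); then $(g,h)\in G\wr_\Omega H$ and $(\eta\wr_\phi\psi)(g,h)=(f',e)$.

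The only real risk in this plan is bookkeeping: keeping the inverse in $f^h(\omega)=f(\omega^{h^{-1}})$ straight, and applying the action-isomorphism relation in the correct direction, so that the computation in (i) collapses to a single instance of that relation rather than introducing spurious inverses. No step requires any substantial idea beyond this.
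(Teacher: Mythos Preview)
Your approach is exactly the paper's: compute both sides of the homomorphism identity, reduce the first-coordinate comparison to a single instance of the action-isomorphism relation, and handle part (ii) by the obvious back-and-forth with delta-type functions in one direction and pointwise lifting in the other.

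There is one bookkeeping slip, precisely of the kind you warned yourself about. With $f^h(\omega)=f(\omega^{h^{-1}})$ one checks $(f^{h_1})^{h_2}=f^{h_1h_2}$, so $f\mapsto f^h$ is a \emph{right} action; the associated semidirect-product multiplication is therefore
\[
(f_1,h_1)(f_2,h_2)=(f_1\cdot f_2^{\,h_1^{-1}},\,h_1h_2),
\]
not $(f_1\cdot f_2^{\,h_1},h_1h_2)$ as you wrote (the latter fails associativity for non-abelian $H$). This is the convention the paper actually uses. With the corrected rule the reduction in (i) lands on the identity $\phi^{-1}(\gamma)^{h_1}=\phi^{-1}(\gamma^{\psi(h_1)})$, which is equivalent to the one you stated (replace $h_1$ by $h_1^{-1}$) and follows from $\phi(\omega^{h})=\phi(\omega)^{\psi(h)}$ in the same way. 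So the plan is sound and matches the paper; just fix the inverse in the multiplication law before executing it.
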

\begin{proof}
    {\rm (i)} Let $(f_1,h_1),(f_2,h_2)\in G\wr_\Omega H$.
    We have 
    \[\begin{aligned}
        (\eta\wr_\phi\psi(f_1,h_1))(\eta\wr_\phi\psi(f_2,h_2))&=(\eta\circ f_1\circ\phi^{-1},\psi(h_1)) (\eta\circ f_2\circ\phi^{-1},\psi(h_2))\\
        &=((\eta\circ f_1\circ\phi^{-1})(\eta\circ f_2\circ\phi^{-1})^{\psi(h_1)^{-1}},\psi(h_1h_2))
    \end{aligned}
    \]
    Let $\gamma\in\Gamma$.
    Since $(\phi,\psi)$ is an action isomorphism, we have $\phi^{-1}(\gamma^{\psi(h_1)})=\phi^{-1}(\gamma)^{h_1}$.
    Therefore,
    \[\begin{aligned}
        (\eta\circ f_2\circ \phi^{-1})^{\psi(h_1)^{-1}}(\gamma)&=(\eta\circ f_2\circ \phi^{-1})(\gamma^{\psi(h_1)})\\
        &=(\eta\circ f_2^{h_1^{-1}}\circ \phi^{-1})(\gamma)
    \end{aligned}\]
    This follows that $(\eta\circ f_2\circ \phi^{-1})^{\psi(h_1)^{-1}}=\eta\circ f_2^{h_1^{-1}}\circ \phi^{-1}$.
    Also note that 
    \[\begin{aligned}
        (\eta\circ f_1\circ\phi^{-1})(\eta\circ f_2^{h_1^{-1}}\circ\phi^{-1})(\gamma)&=(\eta\circ f_1\circ\phi^{-1})(\gamma)(\eta\circ f_2^{h_1^{-1}}\circ\phi^{-1})(\gamma)\\
        &=(\eta\circ(f_1f_2^{h_1^{-1}})\circ\phi^{-1})(\gamma).
    \end{aligned}\]
    Hence $(\eta\circ f_1\circ\phi^{-1})(\eta\circ f_2^{h_1^{-1}}\circ\phi^{-1})=\eta\circ(f_1f_2^{h_1^{-1}})\circ\phi^{-1}$.
    Therefore, 
    \[\begin{aligned}
        (\eta\wr_\phi\psi(f_1,h_1))(\eta\wr_\phi\psi(f_2,h_2))&=(\eta\circ(f_1f_2^{h_1^{-1}})\circ\phi^{-1},\psi(h_1h_2))\\
        &=\eta\wr_\phi\psi((f_1,h_1)(f_2,h_2))
    \end{aligned}
    \]
    which implies that $\eta\wr_\phi\psi$ is a homomorphism.

    {\rm (ii)}
    Fix $\gamma\in\Gamma$.
    If $\eta\wr_\phi\psi$ is surjective, then let $k\in K$, let 
    $\delta_k\in K^\Gamma$ be given by $\delta_k(\gamma):=k$ and 
    $\delta_k(\alpha):=1$ for every $\alpha\in \Gamma\setminus\{\gamma\}$. 
    Then $(\delta_k,1)\in K\wr_{\Gamma}E$.
    Since $\eta\wr_\phi\psi$ is surjective, there exists $(f,h)\in G\wr_{\Omega} H$
    such that $\eta\wr_\phi\psi(f,h)=(\delta_k,1)$.
    This implies $(\eta\circ f\circ\phi^{-1},\psi(g))=(\delta_k,1)$.
    Hence $\eta(f\circ\phi^{-1}(\omega))=\delta_k(\omega)=k$.
    This follows that $\eta$ is surjective.

    We now consider the case when $\eta$ is surjective.
    Let $(f,e)\in K\wr_{\Gamma}E$.
    Since $\eta$ is surjective, we have that for every $\alpha\in\Gamma$,
    there exists $g_\alpha\in G$ such that $\eta(g_\alpha)=f(\alpha)$.
    Let $f_1\in G^\Omega$ where $f_1(\nu):=g_{\phi(\nu)}$ for every $\nu\in\Omega$.
    We then have 
    $(\eta\circ f_1\circ\phi^{-1})(\alpha)=\eta(g_\alpha)=f(\alpha)$ for every $\alpha\in\Gamma$.
    Therefore $\eta\circ f_1\circ\phi^{-1}=f$ and
    \[\eta\wr_\phi\psi(f_1,\psi^{-1}(e))=(\eta\circ f_1\circ\phi^{-1},e)=(f,e).\]
    This implies that $\eta\wr_\phi\psi$ is surjective.
\end{proof}

\section{Inverse systems of finite groups}
\label{sec:inverse}

For the remainder of this paper, 
we will always assume that our inverse systems
are in $\mathbf{FinGrp}$ unless explicitly stated.
We also assume that all posets are finite.
Then by Corollary~\ref{cor:fininverse limitexists}, the inverse limit of the inverse systems in $\mathbf{FinGrp}$ always exists.
We often omit the partial order symbol
when mentioning posets, for example, we might write $I$ rather than $(I,\le)$ if the order is clear from the context.

\begin{notation}
    Let $f:X\rightarrow Y$ be a map, let $A\subseteq X$ and let $B\subseteq Y$ such that $f(A)\subseteq B$. 
    We define the \emph{restriction} $f|_{A,B}:A\rightarrow B,x\mapsto f(x)$.
\end{notation}

\begin{definition}
    Let $\mathcal{X}:=((X_i),(f_{ij}))$ be an inverse system over $I$.
    If $Y_i$ is a subgroup of $X_i$ for every $i\in I$ and
    $f_{ij}(Y_j)\le Y_i$ for every $i\le j\in I$, then
    $\mathcal{Y}:=((Y_i),(f_{ij}|_{Y_j,Y_i}))$ is also an inverse system over $I$.
    We call $\mathcal{Y}$ a \emph{subsystem} of $\mathcal{X}$.    
\end{definition}

A reader with some background knowledge of category theory or profinite group theory may find
Lemma~\ref{lem:subinverse limituniqueness} and Theorem~\ref{thm:limpullback} familiar.
However, as we have not found a direct reference, we include the proofs for completeness.

\begin{lemma}\label{lem:subinverse limituniqueness}
    Let $\mathcal{X}:=((X_i),(f_{ij}))$ be an inverse system over $I$
    and let \[\mathcal{Y}:=((Y_i),(f_{ij}|_{Y_j,Y_i}))\]
    be a subsystem of $\mathcal{X}$.
    If $(X,(p_i))=\varprojlim(\mathcal{X})$, 
    then there exists a subgroup $Y$
    of $X$ such that $p_i(Y)\le Y_i$ for every $i\in I$ and $(Y,(p_i|_{Y,Y_i}))=\varprojlim(\mathcal{Y})$.
    Moreover, if $Z$ is a subgroup of $X$ such that $p_i(Z)\le Y_i$ for every $i\in I$, then $Z\le Y$.
\end{lemma}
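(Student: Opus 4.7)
The plan is to exhibit $Y$ concretely as $Y := \bigcap_{i \in I} p_i^{-1}(Y_i)$. Being the intersection of the subgroups $p_i^{-1}(Y_i) \le X$, the set $Y$ is automatically a subgroup of $X$, and by construction $p_i(Y) \le Y_i$ for every $i \in I$. In particular, the restrictions $q_i := p_i|_{Y, Y_i} : Y \to Y_i$ are well-defined, and for $i \le j$ in $I$ one has $f_{ij}|_{Y_j, Y_i} \circ q_j = f_{ij} \circ p_j|_Y = p_i|_Y = q_i$, so $(Y, (q_i))$ is a cone over $\mathcal{Y}$. The ``moreover'' clause is then immediate from the definition: if $Z \le X$ satisfies $p_i(Z) \le Y_i$ for every $i$, then $Z \subseteq p_i^{-1}(Y_i)$ for every $i$, hence $Z \le Y$.

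The substantive step is to verify that $(Y, (q_i))$ satisfies the universal property of an inverse limit of $\mathcal{Y}$. Given any cone $(W, (w_i))$ over $\mathcal{Y}$, I would compose the $w_i$ with the inclusions $Y_i \hookrightarrow X_i$ to obtain a cone over $\mathcal{X}$; by the universal property applied to $(X, (p_i))$, there is a unique homomorphism $u : W \to X$ with $p_i \circ u = w_i$ for every $i \in I$. Since $w_i(W) \le Y_i$, the image $u(W)$ lies in every $p_i^{-1}(Y_i)$, hence in $Y$, so $u$ corestricts to a homomorphism $\tilde u : W \to Y$ satisfying $q_i \circ \tilde u = w_i$. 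Uniqueness of $\tilde u$ then follows by composing any such candidate with the inclusion $Y \hookrightarrow X$ and invoking uniqueness in $X$.

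I do not expect any serious obstacle here: the argument is a routine application of the universal property, and the concrete description of $\varprojlim(\mathcal{Y})$ in Theorem~\ref{thm:limitexists} provides a ready sanity check that such a $Y$ must exist inside any realisation of $\varprojlim(\mathcal{X})$. The only bookkeeping point is to track the direction of morphisms when restricting, noting that the transition maps $f_{ij}|_{Y_j, Y_i}$ are well-defined precisely because $\mathcal{Y}$ was assumed to be a subsystem of $\mathcal{X}$.
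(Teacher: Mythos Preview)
Your argument is correct. The route differs from the paper's in a small but pleasant way: the paper first passes to the concrete realisation of $X$ from Theorem~\ref{thm:limitexists}, defines $Y$ as the explicit set of compatible $(y_i)\in\prod Y_i$ (so that $(Y,(q_i))=\varprojlim(\mathcal{Y})$ is immediate from Theorem~\ref{thm:limitexists}), and then spends its effort on the ``moreover'' clause via a universal-property chase through $X$. You instead define $Y:=\bigcap_{i\in I}p_i^{-1}(Y_i)$ abstractly inside the given $X$ and verify the universal property of $\varprojlim(\mathcal{Y})$ by hand, which trades the work: your existence step costs a little more, but the ``moreover'' clause becomes a one-line tautology, and you never need to identify $X$ with its concrete model. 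Both arguments are standard; yours has the mild advantage of working uniformly for any realisation of $\varprojlim(\mathcal{X})$.
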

\begin{proof}
    First, we show existence. By Theorem~\ref{thm:limitexists}, we can write
    \[X{\color{teal}=}\left\{(x_i) \in \prod_{i \in I} X_{i} \mid x_{i}=f_{i j}(x_j) \text { for all } i \leq j \text { in } I\right\}\]
    equipped with the natural projections $p_i:X\rightarrow X_i$.
    Let
    \[Y:=\left\{(y_i) \in \prod_{i \in I} Y_{i} \mid y_{i}=f_{i j}(y_j) \text { for all } i \leq j \text { in } I\right\}\]
    equipped with the natural projections $q_i:Y\rightarrow Y_i$.
    By Theorem~\ref{thm:limitexists}, $(Y,(q_i))$ is an inverse limit of $\mathcal{Y}$.
    By definition, $Y_i\le X_i$ for each $i$. 
    It can be seen that $\prod_i Y_i\le\prod_i X_i$, $Y\le X$
    and $q_i=p_i|_{Y,Y_i}$. This completes the existence part.

    We now show the second part of our statement. 
    Let $Z\le X$ such that $p_i(Z)\le Y_i$ for every $i\in I$.
    Note that $f_{ij}|_{Y_j,Y_i}\circ p_j|_{Z,Y_j}=p_i|_{Z,Y_i}$ for every $i\le j\in I$.
    Then by the definition of inverse limits, 
    there exists a unique morphism $u:Z\rightarrow Y$ such that
    $p_i|_{Z,Y_i}=p_i|_{Y,Y_i}\circ u$.
    Since $Y_i\le X_i$, we also have $p_i|_{Y,X_i}\circ u=p_i|_{Z,X_i}$,
    $f_{ij}\circ p_j|_{Y,X_j}=p_i|_{Y,X_i}$ and $f_{ij}\circ p_j|_{Z,X_j}=p_i|_{Z,X_i}$
    for  every $i\le j\in I$.
    Let $\iota_Y:Y\rightarrow X$ and $\iota_Z:Z\rightarrow X$ be inclusions.
    Note that $p_i|_{Y,X_i}=p_i\circ\iota_Y$. We have that 
    \[p_i|_{Z,X_i}=p_i|_{Y,Y_i}\circ u=p_i\circ\iota_Y\circ u.\]
    Thus we have a diagram as shown in Figure \ref{fig:XYZ}.
    Also note that
    $p_i|_{Z,X_i}=p_i\circ\iota_Z$ and recall that $X$ is the inverse limit of $\mathcal{X}$.
    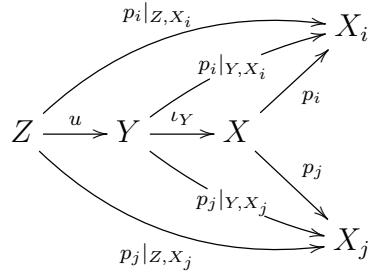
\begin{figure}[ht]
    \[\xymatrix{
        &&&X_i\\
        Z\ar[r]^{u}\ar@/^1.5pc/[rrru]^{p_i|_{Z,X_i}}\ar@/_1.5pc/[rrrd]_{p_j|_{Z,X_j}}&Y\ar[r]^{\iota_Y}\ar@/^/[rru]|{p_i|_{Y,X_i}}\ar@/_/[rrd]|{p_j|_{Y,X_j}}&X\ar[ru]_{p_i}\ar[rd]^{p_j}\\
        &&&X_j
    }\]
    \caption{Diagram for the proof of Lemma~\ref{lem:subinverse limituniqueness}}
    \label{fig:XYZ}
    \end{figure}
    Then by the definition of inverse limits, we have that $\iota_Z=\iota_Y\circ u$.
    This implies that $z=\iota_Z(z)=\iota_Y(u(z))=u(z)$ for every $z\in Z$.
    Hence $u$ is the inclusion from $Z$ to $Y$ which implies that $Z\le Y$.
    This completes the proof.
\end{proof}

\begin{definition}\label{def:invker}
    Let $\mathcal{X}:=((X_i),(f_{ij}))$ and $\mathcal{Y}:=((Y_i),(g_{ij}))$ be inverse systems over $I$
    and let $\Phi:=(\phi_i):\mathcal{X}\rightarrow \mathcal{Y}$ be a morphism.
    If $X_i$ is trivial for each $i$, then we say that $\mathcal{X}$ is a \emph{trivial inverse system} and
    if $\phi_i$ is trivial for each $i$, then we say that $\Phi$ is a \emph{trivial morphism}.
    Let $\mathcal{Z}:=((Z_i),(g_{ij}|_{Z_j,Z_i}))$ be a subsystem of $\mathcal{Y}$.
    Define
    \[\Phi^{-1}(\mathcal{Z}):=((\phi_i^{-1}(Z_i)),(f_{ij}|_{\phi_j^{-1}(Z_j),\phi_i^{-1}(Z_i)})),\]
    where $\phi_i^{-1}(Z_i)$ is the full preimage of $Z_i$ and define
    \[\ker(\Phi):=((\ker(\phi_i)),(f_{ij}|_{\ker(\phi_j),\ker(\phi_i)})).\]
\end{definition}

It can be seen that $\Phi^{-1}(\mathcal{Z})$ and $\ker(\Phi)$ are well-defined subsystems of $\mathcal{X}$.

\begin{theorem}\label{thm:limpullback}
    Assume the notation in Definition~\ref{def:invker}.
    Let $(X,(p_i)):=\varprojlim(\mathcal{X})$, let $(Y,(q_i)):=\varprojlim(\mathcal{Y})$
    and let $Z\le Y$ such that for every $i$,  we have that $q_i(Z)\le Z_i$ and $(Z,(q_i|_{Z,Z_i}))=\varprojlim(\mathcal{Z})$.
    If $\phi:=\varprojlim(\Phi):X\rightarrow Y$, then for every $i\in I$, we have that
    \begin{itemize}
        \item[\rm(i)] $p_i(\phi^{-1}(Z))\le \phi_i^{-1}(Z_i)$; 
        \item[\rm(ii)] $(\phi^{-1}(Z),p_i|_{\phi^{-1}(Z),\phi^{-1}(Z_i)})=\varprojlim(\Phi^{-1}(\mathcal{Z}))$;
        \item[\rm(iii)] $p_i(\ker(\phi))\le\ker(\phi_i)$ and $(\ker(\phi),p_i|_{\ker(\phi),\ker(\phi_i)})=\varprojlim(\ker(\Phi))$.
    \end{itemize}
\end{theorem}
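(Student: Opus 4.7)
The central tool is the defining commutation $q_i \circ \phi = \phi_i \circ p_i$ of the induced map $\phi = \varprojlim(\Phi)$, combined with the maximality clause in Lemma~\ref{lem:subinverse limituniqueness}, which characterises the inverse limit subgroup of a subsystem as the largest subgroup projecting into the prescribed fibres. Part~(iii) will drop out of part~(ii) by specialising $\mathcal{Z}$ to the trivial subsystem of $\mathcal{Y}$ (each $Z_i = 1$, and $Z = 1$): then $\phi_i^{-1}(Z_i) = \ker(\phi_i)$, $\phi^{-1}(Z) = \ker(\phi)$, and $\Phi^{-1}(\mathcal{Z}) = \ker(\Phi)$. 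So the substance lies in parts~(i) and~(ii).

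For part~(i), take $x \in \phi^{-1}(Z)$ and compute $\phi_i(p_i(x)) = q_i(\phi(x)) \in q_i(Z) \le Z_i$, giving $p_i(x) \in \phi_i^{-1}(Z_i)$. Along the way, the analogous commutation $\phi_i \circ f_{ij} = g_{ij} \circ \phi_j$ for transition maps shows that $\Phi^{-1}(\mathcal{Z})$ is a bona fide subsystem of $\mathcal{X}$: for $x_j \in \phi_j^{-1}(Z_j)$, we have $\phi_i(f_{ij}(x_j)) = g_{ij}(\phi_j(x_j)) \in g_{ij}(Z_j) \le Z_i$.

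For part~(ii), apply Lemma~\ref{lem:subinverse limituniqueness} to $\Phi^{-1}(\mathcal{Z})$ viewed as a subsystem of $\mathcal{X}$, obtaining a subgroup $W \le X$ with $p_i(W) \le \phi_i^{-1}(Z_i)$ and $(W, p_i|_{W, \phi_i^{-1}(Z_i)}) = \varprojlim(\Phi^{-1}(\mathcal{Z}))$, subject to the maximality property that any subgroup of $X$ projecting into the $\phi_i^{-1}(Z_i)$'s lies in $W$. Part~(i) gives $\phi^{-1}(Z) \le W$ immediately. For the reverse inclusion, take $w \in W$; then $q_i(\phi(w)) = \phi_i(p_i(w)) \in Z_i$ for every $i$, so $\phi(W)$ is a subgroup of $Y$ projecting into the $Z_i$'s. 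Applying Lemma~\ref{lem:subinverse limituniqueness} inside $Y$ to $\mathcal{Z} \le \mathcal{Y}$ then forces $\phi(W) \le Z$, whence $W \le \phi^{-1}(Z)$.

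The one subtle step is this last invocation of maximality: Lemma~\ref{lem:subinverse limituniqueness} phrases maximality in terms of the canonical inverse limit subgroup it constructs, so one has to check that the hypothesised $Z \le Y$ coincides with that canonical subgroup $Z_{\mathrm{can}}$. From $q_i(Z) \le Z_i$ and the maximality clause one gets $Z \le Z_{\mathrm{can}}$; since both $Z$ and $Z_{\mathrm{can}}$ are inverse limits of the same finite system $\mathcal{Z}$, they have equal cardinality, so the inclusion is an equality. Once this identification is in hand, the remainder of the argument is routine diagram chasing.
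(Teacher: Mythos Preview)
Your proof is correct and follows essentially the same route as the paper: part~(i) via the commutation $q_i\circ\phi=\phi_i\circ p_i$, part~(ii) by applying Lemma~\ref{lem:subinverse limituniqueness} once inside $X$ (to get $\phi^{-1}(Z)\le W$) and once inside $Y$ (to get $\phi(W)\le Z$), and part~(iii) as the specialisation to the trivial subsystem. Your explicit verification that the hypothesised $Z$ coincides with the canonical $Z_{\mathrm{can}}$ via a cardinality argument is a point the paper leaves implicit, so your write-up is in fact slightly more careful there.
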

\begin{proof}
    {\rm (i)} Let $i\in I$.
    Note that $q_{i}\circ\phi=\phi_i\circ p_i$.
    Then \[\phi_i(p_i(\phi^{-1}(Z)))=q_i(\phi(\phi^{-1}(Z)))\le q_i(Z)\le Z_i\]
    which implies that $p_i(\phi^{-1}(Z))\le \phi_i^{-1}(Z_i)$.
    
    {\rm (ii)}
    Let $T\le X$ such that $p_i(T)\le \phi_i^{-1}(Z_i)$ and 
    $(T,(p_i|_{T,\phi^{-1}(Z_i)}))=\varprojlim(\Phi^{-1}(\mathcal{Z}))$.
    Then by Lemma~\ref{lem:subinverse limituniqueness} and the fact
    that $p_i(\phi^{-1}(Z))\le\phi^{-1}_i(Z_i)$ for every $i\in I$, 
    we have that $\phi^{-1}(Z)\le T$.
    On the other hand, note that
    \[q_i(\phi(T))=\phi_i(p_i(T))\le\phi_i(\phi^{-1}_i(Z_i))\le Z_i.\]
    Then by Lemma~\ref{lem:subinverse limituniqueness}, 
    $\phi(T)\le Z$ (see Figure \ref{fig:pullback}), which implies that
    $T\le \phi^{-1}(Z)$.
    \begin{figure}[ht]
        \[\xymatrix{
        &&Z_i\\
        \phi(T)\ar@{^{(}->}[r]\ar@/^/[rru]^{q_i|_{\phi(T),Z_i}\ }\ar@/_/[rrd]_{q_j|_{\phi(T),Z_j}}& Z\ar[ru]|{q_i|_{Z,Z_i}}\ar[rd]|{q_j|_{Z,Z_j}}\\
        &&Z_j
    }\]
    \caption{Diagram for the proof of Theorem~\ref{thm:limpullback}}
    \label{fig:pullback}
    \end{figure}
    Therefore, $T=\phi^{-1}(Z)$, which completes the proof of {\rm (ii)}.
    
    As for {\rm (iii)}, we point out that $\ker(\Phi)=\Phi^{-1}(\mathcal{Z}_0)$ where
    $\mathcal{Z}_0$ is the trivial subsystem of $\mathcal{Y}$.
    Also note that $\varprojlim(\mathcal{Z}_0)=1$.
    Hence $\ker(\Phi)$ is simply a special case of $\Phi^{-1}(\mathcal{Z})$.
\end{proof}

We can write the equation in Theorem~\ref{thm:limpullback} as
$\varprojlim(\Phi^{-1}(\mathcal{Z}))=\phi^{-1}(\varprojlim(\mathcal{Z}))$.
In other words, inverse limit and pullback commute.

\begin{lemma}\label{lem:nonemptylim}
    Let $\mathcal{X}:=((X_i),(f_{ij}))$ be a surjective inverse system in $\mathbf{Set}$ 
    over an in-forest poset $I$ and
    let $(X,(p_i))=\varprojlim(\mathcal{X})$.
    If $X_i$ is non-empty for every $i\in I$, then $X$ is non-empty.
\end{lemma}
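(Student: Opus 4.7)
The plan is to proceed by induction on $|I|$. The base case $|I|\le 1$ is immediate: for $|I|=0$ the inverse limit is the empty product, a singleton, and for $|I|=1$ the inverse limit is the unique $X_i$, which is non-empty by hypothesis.

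For the inductive step I pick a maximal element $m\in I$ (which exists since $I$ is finite and non-empty) and set $I':=I\setminus\{m\}$. Because $m$ is maximal, for every $i\in I'$ we have $\downarrow_I i\subseteq I'$, so $I'$ is again a finite in-forest poset, and the restriction $\mathcal{X}'$ of $\mathcal{X}$ to $I'$ is a surjective inverse system in $\mathbf{Set}$ with all terms non-empty. By the inductive hypothesis, $\varprojlim(\mathcal{X}')$ contains some element $(x_i)_{i\in I'}$. What remains is to choose $x_m\in X_m$ satisfying $f_{im}(x_m)=x_i$ for every $i<m$ in $I$.

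This is where the in-forest hypothesis plays the decisive role. Since $\downarrow m$ is a finite chain, either $\downarrow m=\{m\}$, in which case any element of $X_m$ works, or $\downarrow m\setminus\{m\}$ has a greatest element $j$, which is the unique lower cover of $m$ in $I$. By surjectivity of $f_{jm}\colon X_m\to X_j$, I can pick $x_m$ with $f_{jm}(x_m)=x_j$. For any other $i<m$, the chain structure forces $i\le j$, and hence
\[f_{im}(x_m)=f_{ij}(f_{jm}(x_m))=f_{ij}(x_j)=x_i,\]
using the compatibility of $(x_i)_{i\in I'}$. This produces the required extension and closes the induction.

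There is no real obstacle here beyond noticing that the single equation $f_{jm}(x_m)=x_j$ already captures every compatibility condition on $x_m$. It is worth emphasising, however, that both hypotheses are used essentially: without surjectivity of the transition maps, the extension equation need not be solvable, and without the in-forest assumption $m$ could sit above several pairwise incomparable elements of $I'$, each imposing an independent constraint on $x_m$ that need not be jointly satisfiable.
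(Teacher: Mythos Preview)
Your proof is correct and follows essentially the same approach as the paper: both arguments exploit that in an in-forest each element has a unique lower cover, so extending a compatible family by one element reduces to a single equation which surjectivity renders solvable. The only difference is organizational---the paper builds the compatible family level by level using the rank function of Lemma~\ref{lem:partition}, whereas you peel off one maximal element at a time via induction on $|I|$.
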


\begin{proof}
    Let \[X:=\left\{(x_i) \in \prod_{i \in I} X_{i} \mid x_{i}=f_{i j}(x_j) \text { for all } i \leq j \text { in } I\right\}\]
    equipped with the natural projections $p_i:X\rightarrow X_i$ for every $i\in I$.
    By Theorem~\ref{thm:limitexists}, $(X,(p_i))=\varprojlim(\mathcal{X})$.
    Therefore, it is sufficient to show that $X$ is non-empty.
    By Lemma~\ref{lem:partition}, let $\rho$ be the rank function of $I$
    and let $S_k:=\{i\in I\mid \rho(i)=k\}$ for every $k\in \mathbb{N}$.
    Note that $\{S_k\mid k\in\mathbb{N}\}$ forms a partition of $I$.
    In particular, $I=\bigcup_{k\in\mathbb{N}} S_k$.

    Now we define $(x_i)_{i\in I}$ in an inductive way.
    For every $i\in S_0$, let $x_i\in X_i$.
    For $i\in S_\alpha$ where $\alpha>0$, we assume that $x_j\in X_j$ has been defined for every $j\in S_0\cup\cdots\cup S_{\alpha-1}$.
    Note that there exists $i'\in S_{\alpha-1}$ such that $i$ covers $i'$.
    By the definition of in-forest poset, this $i'$ is unique.
    Let $x_i$ be an element in $f_{i'i}^{-1}(x_{i'})$.
    Since $I=\bigcup_{k\in\mathbb{N}} S_k$, we have defined $x_i\in X_i$
    for every $i\in I$.
    We show that $(x_i)_{i\in I}\in X$.
    Let $i,j\in I$ such that $i\le j$.
    By the definition of in-forest poset,
    there exists a unique sequence
    $i=i_0, i_1,\ldots, i_n=j$ such that
    $i_k$ covers $i_{k-1}$ for $k\in\{1,\ldots,n\}$.
    Therefore, \[f_{ij}(x_j)=f_{i_0i_1}\circ f_{i_1i_2}\circ\cdots\circ f_{i_{n-1}i_n}(x_j)=x_i\]
    Hence $(x_i)\in X$ which completes the proof.
\end{proof}

Now we are back to inverse systems in $\mathbf{FinGrp}$.
\begin{theorem}\label{thm:limsurj}
    Let $I$ be an in-forest poset.
    Let $\mathcal{X}:=((X_i),(f_{ij}))$ and $\mathcal{Y}:=((Y_i),(g_{ij}))$ be 
    inverse systems over $I$
    and let $\Phi:=(\phi_i):\mathcal{X}\rightarrow \mathcal{Y}$ be a morphism.
    If $\phi_i$ is surjective for every $i\in I$ and $\ker(\Phi)$ is a surjective inverse system,
    then $\varprojlim(\Phi)$ is surjective.
\end{theorem}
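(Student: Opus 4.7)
The plan is to reduce surjectivity of $\varprojlim(\Phi)$ to the non-emptiness statement of Lemma~\ref{lem:nonemptylim}. Fix an arbitrary $y=(y_i)\in \varprojlim(\mathcal{Y})$; I need to produce $x\in\varprojlim(\mathcal{X})$ with $\varprojlim(\Phi)(x)=y$. For each $i\in I$, set $F_i:=\phi_i^{-1}(y_i)$, which is non-empty since $\phi_i$ is surjective. A direct calculation using the commuting square $\phi_i\circ f_{ij}=g_{ij}\circ\phi_j$ together with $g_{ij}(y_j)=y_i$ shows that $f_{ij}(F_j)\subseteq F_i$, so the restrictions $f_{ij}|_{F_j,F_i}$ turn $(F_i)_{i\in I}$ into an inverse system in $\mathbf{Set}$ (the $F_i$ are cosets of $\ker(\phi_i)$, not subgroups, so I must work in $\mathbf{Set}$ rather than $\mathbf{FinGrp}$).

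The key step is to check that this fiber system is surjective. Fix $i\le j$ in $I$ and $z\in F_i$; I must find $w\in F_j$ with $f_{ij}(w)=z$. Since $\phi_j$ is surjective, choose $w_0\in X_j$ with $\phi_j(w_0)=y_j$. Then $\phi_i(f_{ij}(w_0))=g_{ij}(y_j)=y_i=\phi_i(z)$, so the element $z^{-1}f_{ij}(w_0)$ lies in $\ker(\phi_i)$. Because $\ker(\Phi)$ is a surjective inverse system, the restriction $f_{ij}|_{\ker(\phi_j),\ker(\phi_i)}$ is surjective, and so there exists $k\in\ker(\phi_j)$ with $f_{ij}(k)=z^{-1}f_{ij}(w_0)$. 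Setting $w:=w_0k^{-1}$ gives $\phi_j(w)=y_j$ (since $k\in\ker(\phi_j)$) and $f_{ij}(w)=f_{ij}(w_0)f_{ij}(k)^{-1}=z$, as required.

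With the fiber system shown to be surjective with non-empty terms, I apply Lemma~\ref{lem:nonemptylim} (which requires exactly the in-forest hypothesis on $I$ assumed here) to conclude that $\varprojlim_i F_i$ is non-empty in $\mathbf{Set}$. Using the explicit description in Theorem~\ref{thm:limitexists}, any element of this limit is a compatible family $(x_i)_{i\in I}$ with $x_i\in F_i\subseteq X_i$ and $x_i=f_{ij}(x_j)$ whenever $i\le j$; such a family is exactly an element $x\in\varprojlim(\mathcal{X})$. Since $\phi_i(x_i)=y_i$ for every $i$, the universal property of the limit gives $\varprojlim(\Phi)(x)=y$, proving surjectivity.

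The main obstacle is the surjectivity verification for the fiber system; once the kernel-subtraction trick above is carried out, the argument is essentially an application of Lemma~\ref{lem:nonemptylim}. The in-forest hypothesis on $I$ is used only at that final invocation, mirroring the inductive construction carried out in the proof of Lemma~\ref{lem:nonemptylim}.
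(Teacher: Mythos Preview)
Your proof is correct and follows essentially the same approach as the paper's: both form the fiber system $F_i=\phi_i^{-1}(y_i)$ in $\mathbf{Set}$, verify it is a surjective inverse system with non-empty terms using the surjectivity of $\ker(\Phi)$, and then invoke Lemma~\ref{lem:nonemptylim}. The only cosmetic difference is that the paper checks surjectivity of the fiber maps via the coset identity $f_{ij}(K_j a_j)=K_i f_{ij}(a_j)=A_i$, whereas you carry out the equivalent element-by-element kernel-subtraction argument.
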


\begin{proof}
    Let
    \[X:=\left\{(x_i) \in \prod_{i \in I} X_{i} \mid x_{i}=f_{i j}(x_j) \text { for all } i \leq j \text { in } I\right\}\]
    and
    \[Y:=\left\{(y_i) \in \prod_{i \in I} Y_{i} \mid y_{i}=g_{i j}(y_j) \text { for all } i \leq j \text { in } I\right\}\]
    equipped with the natural projections $p_i:X\rightarrow X_i$ and $q_i:Y\rightarrow Y_i$ for every $i\in I$ respectively.
    By Theorem \ref{thm:limitexists}, $(X,(p_i))=\varprojlim(\mathcal{X})$ and $(Y,(q_i))=\varprojlim(\mathcal{Y})$.
    Let $\phi:X\rightarrow Y$ be given by 
    $\phi((x_i)):=(\phi_i(x_i))$.
    Note that $q_i\circ\phi((x_i))=\phi_i(x_i)=\phi_i\circ p_i((x_i))$ for every $i\in I$.
    Then $\phi=\varprojlim(\Phi)$.
    Let $(y_i)\in Y$ and for every $i\in I$, let $A_i:=\phi_i^{-1}(y_i)$.
    Since $\phi_i$ is surjective, we have that $A_i$ is non-empty.
    Let $i,j\in I$ such that $i\le j$.
    Note that $\phi_i\circ f_{ij}=g_{ij}\circ\phi_j$.
    We have that \[\phi_i(f_{ij}(A_j))=g_{ij}(\phi_j(A_j))=g_{ij}(y_j)=y_i\]
    which implies that $f_{ij}(A_j)\le \phi_i^{-1}(y_i)=A_i$.
    Therefore, $((A_i),(f_{ij}|_{A_j,A_i}))$ is an inverse system in $\mathbf{Set}$ over $I$ with non-empty $A_i$ for every $i\in I$.
    Let $\mathcal{A}:=((A_i),(f_{ij}|_{A_j,A_i}))$.
    Now we show that $\mathcal{A}$ is a surjective inverse system.
    Let $i\le j\in I$,
    let $K_i:=\ker(\phi_i)$, let $K_j:=\ker(\phi_j)$ and let $a_j\in A_j$.
    Then $A_j=K_ja_j$ and $f_{ij}(A_j)=f_{ij}(K_j)f_{ij}(a_j)$.
    Since $\ker(\Phi)$ is a surjective inverse system, 
    we have that $f_{ij}(A_j)=K_if_{ij}(a_j)=A_i$
    which implies that $\mathcal{A}$ is a surjective inverse system.
    By Lemma \ref{lem:nonemptylim}, $\varprojlim(\mathcal{A})$ is non-empty.
    Let
    \[A:=\left\{(a_i) \in \prod_{i \in I} A_{i} \mid a_{i}=f_{i j}(a_j) \text { for all } i \leq j \text { in } I\right\}\]
    equipped with the natural projections $p'_i:A\rightarrow A_i$ for every $i\in I$.
    By Theorem \ref{thm:limitexists}, $(A,(p'_i))=\varprojlim(\mathcal{A})$.
    Hence $A\ne \emptyset$. Take $(a_i)\in A$.
    Note that $A\subseteq X$.
    Then $\phi((a_i))=(\phi_i(a_i))=(y_i)$.
    Therefore, $\phi$ is surjective which completes the proof.
\end{proof}

Now we characterise the inverse limit projections of a surjective inverse system over an in-forest poset
in Theorem \ref{thm:projection}.
\begin{definition}\label{def:limprojection}
    Let $I$ be an in-forest poset.
    Let $\mathcal{X}:=((X_i),(f_{ij}))$ be an inverse system over $I$.
    Given $i_0\in I$, for every $i\in I$
    let $Y_i:=X_{i\wedge i_0}$ if $i\wedge i_0$ exists, 
    and let $Y_i:=1$ otherwise. 
    For every $i\le j\in I$,
    let $g_{ij}:=f_{(i\wedge i_0)(j\wedge i_0)}$ if $j\wedge i_0$ exists,
    and let $g_{ij}:=1$ otherwise. 
    One can check that $((Y_i),(g_{ij}))$ is a well-defined inverse system over $I$ and we denote this inverse system
    by $\mathcal{X}_{i_0}$.
\end{definition}

\begin{theorem}\label{thm:projection}
    Let $I$ be an in-forest poset,
    let $\mathcal{X}:=((X_i),(f_{ij}))$ be an inverse system over $I$ and 
    let $(X,(p_i)):=\varprojlim(\mathcal{X})$.
    Let $i_0\in I$.
    For $i\in I$,
    let
    $q_i:=f_{(i\wedge i_0)i_0}$ if $i\wedge i_0$ exists, 
    let $q_i=1$ otherwise,
    let $\phi_i:=f_{(i\wedge i_0)i}$ if $i\wedge i_0$ exists, and let $\phi_i=1$ otherwise.
    Let $\Phi:=(\phi_i):\mathcal{X}\rightarrow\mathcal{X}_{i_0}$.
    Then the following holds:
    \begin{itemize}
        \item [\rm (i)] $(X_{i_0},(q_i))=\varprojlim(\mathcal{X}_{i_0})$;
        \item [\rm (ii)] $\Phi$ is a morphism and $\varprojlim(\Phi)=p_{i_0}$;
        \item [\rm (iii)] if $\mathcal{X}$ is surjective, then $p_{i_0}$ is surjective.
    \end{itemize}
\end{theorem}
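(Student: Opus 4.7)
The plan is to prove the three parts in order, with (ii) and (iii) built on top of (i).

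For (i), I would first verify that $(X_{i_0}, (q_i))$ is a cone over $\mathcal{X}_{i_0}$: for $i \le j$, the relation $g_{ij} \circ q_j = q_i$ either reduces to composition of transition maps in $\mathcal{X}$ along the chain $i \wedge i_0 \le j \wedge i_0 \le i_0$, or both sides are trivial (using Lemma~\ref{lem:existenceofmeet} to guarantee that $i \wedge i_0$ and $j \wedge i_0$ exist simultaneously). For the universal property, the clean observation is that $q_{i_0} = f_{i_0 i_0}$ is the identity on $X_{i_0}$, so any morphism $u \colon T \to X_{i_0}$ factoring a cone $(T, (r_i))$ over $\mathcal{X}_{i_0}$ through $X_{i_0}$ must equal $r_{i_0}$. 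I would then check that $u := r_{i_0}$ does the job by first noting that the cone relation applied to $i \wedge i_0 \le i$ (where $g_{(i\wedge i_0)i} = f_{(i\wedge i_0)(i\wedge i_0)} = \operatorname{id}$) forces $r_i = r_{i \wedge i_0}$, and then applying the cone relation along $i \wedge i_0 \le i_0$ to get $q_i \circ r_{i_0} = f_{(i\wedge i_0) i_0} \circ r_{i_0} = r_{i \wedge i_0} = r_i$.

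For (ii), checking that $\Phi$ is a morphism reduces, when $j \wedge i_0$ exists, to verifying $g_{ij} \circ \phi_j = f_{(i\wedge i_0)j} = \phi_i \circ f_{ij}$ by composition in $\mathcal{X}$; the case when the meet does not exist is automatic. Given (i), the identity $\varprojlim(\Phi) = p_{i_0}$ follows from the universal property once I check the compatibility $q_i \circ p_{i_0} = \phi_i \circ p_i$, where both sides reduce to $p_{i \wedge i_0}$ by the cone property of $(X, (p_i))$; uniqueness is already established since $q_{i_0}$ is the identity.

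For (iii), I would apply Theorem~\ref{thm:limsurj} to $\Phi$, using (ii) to identify the limit map with $p_{i_0}$. Each $\phi_i$ is either a transition map of $\mathcal{X}$ (hence surjective by hypothesis) or the trivial map onto $1$, so the main obstacle here is proving that $\ker(\Phi)$ is a surjective inverse system. For $i \le j$ I would split into three subcases using Lemma~\ref{lem:existenceofmeet}: (a) neither $i \wedge i_0$ nor $j \wedge i_0$ exists, so $\ker(\phi_i) = X_i$, $\ker(\phi_j) = X_j$, and surjectivity is inherited from $\mathcal{X}$; (b) $i \wedge i_0 = i$, so $\phi_i = \operatorname{id}$ and $\ker(\phi_i) = 1$, with the image condition holding because $i \le j \wedge i_0 \le j$ lets us factor $f_{ij}$ through $f_{(j\wedge i_0)j}$; (c) $i \wedge i_0 = j \wedge i_0 =: k$, where the key point is that for $y \in \ker(f_{ki})$ and any preimage $x_0$ of $y$ under the surjection $f_{ij}$, the relation $f_{kj}(x_0) = f_{ki}(f_{ij}(x_0)) = f_{ki}(y) = 1$ holds automatically, so no correction of $x_0$ is required. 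Theorem~\ref{thm:limsurj} then delivers surjectivity of $p_{i_0}$.
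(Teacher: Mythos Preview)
Your proposal is correct and follows essentially the same approach as the paper's proof: the same use of $q_{i_0}=\operatorname{id}$ for the universal property in (i), the same verification that $q_i\circ p_{i_0}=\phi_i\circ p_i$ in (ii), and the same case analysis (via Lemma~\ref{lem:existenceofmeet}) showing $\ker(\Phi)$ is a surjective subsystem in order to invoke Theorem~\ref{thm:limsurj} in (iii). The only cosmetic difference is that in case (c) you argue element-wise, whereas the paper writes $\ker(\phi_j)=f_{ij}^{-1}(\ker(\phi_i))$ directly; and in case (b) your factorisation remark is not strictly needed since surjectivity onto the trivial kernel is automatic.
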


\begin{proof}
    Let $((Y_i),(g_{ij})):=\mathcal{X}_{i_0}$.

    {\rm (i)} Let $i, j\in I$ such that $i\le j$.
    Note that $q_i:X_{i_0}\rightarrow Y_i$ and $q_j:X_{i_0}\rightarrow Y_j$.
    By Lemma \ref{lem:existenceofmeet}, if $j\wedge i_0$ does not exist, then $i\wedge i_0$
    also does not exist. 
    Hence $g_{ij}\circ q_j=1=q_i$.
    If $j\wedge i_0$ exists, then $i\wedge i_0$ also exists
    and \[g_{ij}\circ q_j=f_{(i\wedge i_0)(j\wedge i_0)}\circ f_{(j\wedge i_0)i_0}=f_{(i\wedge i_0)i_0}=q_i\]
    Hence $g_{ij}\circ q_j=q_i.$
    Let $(Y,(q_i'))$ such that for every $i\in I$, $q_i':Y\rightarrow Y_i$
    and 
    $g_{ij}\circ q_j'=q_i'$ for every $i\le j\in I$.
    If $i\wedge i_0$ exists, $q_i=f_{(i\wedge i_0)i_0}=g_{(i\wedge i_0)i_0}$
    which implies that
    \[q_i\circ q_{i_0}'=g_{(i\wedge i_0)i_0}\circ q'_{i_0}=q_{i\wedge i_0}'=g_{(i\wedge i_0)i}\circ q_i'=f_{(i\wedge i_0)(i\wedge i_0)}\circ q_i'=q_i'.\]
    If $i\wedge i_0$ does not exist, then $q_i\circ q_{i_0}'=1=q_{i}'$.
    Hence $q_i\circ q_{i_0}'=q_i'$ for every $i\in I$ (see Figure~\ref{fig:projection}).
    \begin{figure}[ht]
        \centering
        \[\xymatrix{
        &&Y_i\\
        Y\ar[r]|{q'_{i_0}}\ar@/^/[rru]|{q'_i}\ar@/_/[rrd]|{q'_j}& X_{i_0}\ar[ru]_{q_i}\ar[rd]^{q_j}\\
        &&Y_j
    }\]
        \caption{Diagram for the proof of Theorem~\ref{thm:projection}}
        \label{fig:projection}
    \end{figure}
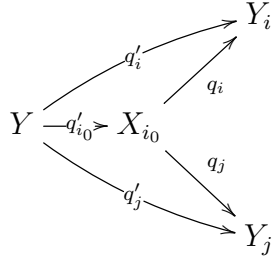
    Let $h$ be a homomorphism from $Y$ to $X_{i_0}$
    satisfying $q_i\circ h=q_i'$ for every $i\in I$. 
    Note that $q_{i_0}={\rm id}_{X_{i_0}}$. 
    Hence $h=q_{i_0}\circ h=q_{i_0}'$.
    This implies that $q_{i_0}'$ is unique with respect to the property that
    $q_i\circ q_{i_0}'=q_i'$ for every $i\in I$.
    We thereby deduce that $(X_{i_0},(q_i))=\varprojlim(\mathcal{X}_{i_0})$.

    {\rm (ii)}
    Let $i,j\in I$ such that $i\le j$.
    By Lemma \ref{lem:existenceofmeet}, $j\wedge i_0$ exists if and only if $i\wedge i_0$ exists.
    If both $j\wedge i_0$ and $i\wedge i_0$ do not exist, then $g_{ij}=1$ and $\phi_i=1$.
    Hence $g_{ij}\circ\phi_j=1=\phi_i\circ f_{ij}$.
    If both $j\wedge i_0$ and $i\wedge i_0$ exist, then
    \[g_{ij}\circ \phi_j=f_{(i\wedge i_0)(j\wedge i_0)}\circ f_{(j\wedge i_0)j}=f_{(i\wedge i_0)j}=f_{(i\wedge i_0)i}\circ f_{ij}=\phi_i\circ f_{ij}.\]
    Hence $\Phi$ is a morphism. 
    We now show that $\varprojlim(\Phi)=p_{i_0}$.
    If $i\wedge i_0$ exists, then 
    \[q_i\circ p_{i_0}=f_{(i\wedge i_0)i_0}\circ p_{i_0}=p_{i\wedge i_0}=\phi_i\circ p_i.\]
    If $i\wedge i_0$ does not exist, then
    \[q_i\circ p_{i_0}=1=\phi_i\circ p_i.\]
    Hence the equation $q_i\circ p_{i_0}=\phi_i\circ p_i$ always holds. 
    This implies that $\varprojlim(\Phi)=p_{i_0}$.

    {\rm (iii)} Since $\mathcal{X}$ is a surjective inverse system, 
    $f_{ij}$ is surjective for all $i\le j\in I$.
    Note that $\phi_i:X_i\rightarrow Y_i$ for $i\in I$.
    If $i\wedge i_0$ exists, then $\phi_i=f_{(i\wedge i_0)i}$ is surjective.
    Otherwise, $Y_i=1$ and it is clear that $\phi_i$ is surjective.
    Hence,
    $\phi_i$ is surjective for every $i\in I$.
    Let $i,j\in I$ such that $i\le j$.
    By Lemma \ref{lem:existenceofmeet},
    $i\wedge i_0$ exists if and only if $j\wedge i_0$ exists.
    If both $i\wedge i_0$ and $j\wedge i_0$ do not exist,
    then $\ker(\phi_i)=X_i$ and $\ker(\phi_j)=X_j$.
    In this case, $f_{ij}|_{\ker(\phi_j),\ker(\phi_i)}=f_{ij}$ which is surjective.
    If both $i\wedge i_0$ and $j\wedge i_0$ exist, then by Lemma \ref{lem:existenceofmeet},
    $i=i\wedge i_0$ or $i\wedge i_0=j\wedge i_0$.
    If $i=i\wedge i_0$, then 
    \[\ker(\phi_i)=\ker(f_{(i\wedge i_0)i})=\ker(f_{ii})=1.\]
    Hence $f_{ij}|_{\ker(\phi_j),\ker(\phi_i)}$ is surjective.
    If $i\wedge i_0=j\wedge i_0$, then 
    \[\phi_j=f_{(j\wedge i_0)j}=f_{(i\wedge i_0)j}=f_{(i\wedge i_0)i}\circ f_{ij}=\phi_i\circ f_{ij}\]
    Hence $\ker(\phi_j)=\ker(\phi_i\circ f_{ij})=f_{ij}^{-1}(\ker(\phi_i))$.
    Since $f_{ij}$ is surjective,
    we have that 
    $f_{ij}(\ker(\phi_j))=\ker(\phi_i).$
    Therefore, $f_{ij}|_{\ker(\phi_j),\ker(\phi_i)}$ is surjective 
    which implies that $\ker(\Phi)$ is a surjective inverse system.
    Then by Theorem \ref{thm:limsurj} and {\rm (ii)}, $p_{i_0}$ is surjective.
\end{proof}

Theorem~\ref{thm:projection} establishes that a limit projection $p_{i_0}$ of a surjective inverse system over an in-forest poset is also surjective.
This does not hold in general for an arbitrary inverse system. Moreover, the theorem shows that
$p_{i_0}$ can be realised as the inverse limit of a certain morphism, allowing us to compute $\ker(p_{i_0})$ using Theorem~\ref{thm:limpullback}, which is essential in our discussion.
\section{Hybrid wreath products}\label{sec:hybridwreath}

In this section, we introduce a new concept,
which we call hybrid wreath product. 
Throughout this section, all sets are finite.

\begin{definition}\label{def:gw}
    Let $G$ and $H$ be groups and let
    $\theta:G\rightarrow H$ be a homomorphism.
    Let $\Omega:=[H:\theta(G)]$ be the set of right cosets of $\theta(G)$ in $H$ and
    let $\rho:H\rightarrow\Sym(\Omega)$ be the 
    action of $H$ on $\Omega$ by right multiplication.
    Write $\omega=\theta(G)\in\Omega$ and let $(t_\nu)_{\nu\in\Omega}$ 
    be a permutation transversal with respect to $\omega$. Then $H_\omega=\theta(G)$.
    Let $\iota:H\rightarrow \theta(G)\wr_\Omega \rho(H)$ 
    be the standard embedding map of $H$
    with respect to $\omega$ and $(t_\nu)_{\nu\in\Omega}$
    and let
    \[\tilde{\theta}:=\theta|_{G,\theta(G)}\wr_{{\rm id}_\Omega}{\rm id}_{\rho(H)}:G\wr_{\Omega}\rho(H)\rightarrow\theta(G)\wr_{\Omega}\rho(H).\]
    Define \[\operatorname{HW}(G,H,\theta,\iota):=\tilde{\theta}^{-1}(\iota(H))\le G\wr_\Omega\rho(H)\]
    which is called the \emph{hybrid wreath product} of $G$ by $H$
    with respect to $\theta$ and $\iota$. 
    We call the map \[p_\theta:=(\iota|_{H,\iota(H)})^{-1}\circ\tilde{\theta}:\operatorname{HW}(G,H,\theta,\iota)\rightarrow H\]
    the \emph{standard map} of the hybrid wreath product.
    If $\theta(G)\trianglelefteq H$,
    then we say that the hybrid wreath product is \emph{normal}.
    Also define 
    \[\operatorname{BW}(G,H,\theta,\iota):=p_\theta^{-1}(\theta(G))\le \operatorname{HW}(G,H,\theta,\iota)\]
    which is called the \emph{base subgroup} of $\operatorname{HW}(G,H,\theta,\iota)$.
    For every $\nu\in\Omega$, define an \emph{evaluation map} of $\operatorname{BW}(G,H,\theta,\iota)$ as follows:
    $p_\nu:\operatorname{BW}(G,H,\theta,\iota)\rightarrow G, p_\nu(f,\sigma)=f(\nu)$.
\end{definition}

Now we give an example to illustrate the definition of hybrid wreath product, but
first we introduce some notation for extensions of groups.

\begin{notation}
    Given a group $X$, we write $X=A.B$ to denote that $X$ is an \emph{extension of $B$ by $A$}, that is, \( X \) has a normal subgroup isomorphic to \( A \), and the corresponding quotient is isomorphic to \( B \).
    In addition, if $A\le N\le A.B$  
    such that $N/A\cong C\le B$, then we write $N=A.C$.
\end{notation}

\begin{example}\label{examp:hybrid}
    Let $G:=F_{21}=\mathbb{Z}_7\rtimes\mathbb{Z}_3$ be the Frobenius group of order $21$,
    and let $H:=\operatorname{S}_3=\mathbb{Z}_3\rtimes\mathbb{Z}_2$ be the symmetric group of degree $3$.
    Write $\operatorname{S}_3=\langle a\rangle\rtimes\langle b\rangle$. 
    Let $\theta:G\rightarrow H$ be a homomorphism such that $\theta(G)=\langle a\rangle$, and
    let $\Omega=\{\langle a\rangle,\langle a\rangle b\}$
    be the set of right cosets of $\theta(G)$ in $H$.  
    Note that the set $\{1,b\}$ forms a permutation transversal with respect to $\langle a\rangle$ so
    we identify $\Omega$ with this set of representatives.

    Let $\iota:H\rightarrow\theta(G)\wr_\Omega\rho(H)$ be the standard embedding of $H$ with respect to $\omega$ and $\{1,b\}$.
    Observe that $\iota$ is the homomorphism sending $a$ to $(f_a,{\rm id}_\Omega)$
    and sending $b$ to $(f_b,(1,b))$,
    where $f_a(1)=a$, $f_a(b)=a^{-1}$, $f_b(1)=1$ and $f_b(b)=1$.
    This allows us to compute $\operatorname{HW}(G,H,\theta,\iota)$
    and $\operatorname{BW}(G,H,\theta,\iota)$.

    It is clear that $\operatorname{HW}(G,H,\theta,\iota)$ has a normal series
    \[1\trianglelefteq \ker(p_\theta)\trianglelefteq\operatorname{BW}(G,H,\theta,\iota)\trianglelefteq\operatorname{HW}(G,H,\theta,\iota).\]
    One can verify that $\ker(p_\theta)\cong\mathbb{Z}_7^2$
    and $\operatorname{BW}(G,H,\theta,\iota)=\{(x,y)\in G^2\mid \theta(x)=\theta(y)^{-1}\}$,
    which is a proper subdirect subgroup of $G^2$.
    Thus, we have \[\operatorname{HW}(G,H,\theta,\iota)\cong \mathbb{Z}_7^2.\operatorname{S}_3\]
    and \[\operatorname{BW}(G,H,\theta,\iota)\cong \mathbb{Z}_7^2.\mathbb{Z}_3\le G^2.\]
\end{example}

\begin{lemma}\label{lem:uniquewreath}
    Let $G$ and $H$ be groups and let
    $\theta:G\rightarrow H$ be a homomorphism.
    Let $\Omega:=[H:\theta(G)]$ be the set of right cosets of $\theta(G)$ in $H$ and
    let $\rho:H\rightarrow\Sym(\Omega)$ be the 
    action of $H$ on $\Omega$ by right multiplication.
    Write $\omega=\theta(G)\in\Omega$, and let $\iota$ and $\lambda$
    be standard embeddings of $H$ with respect to $\omega$ (and potentially different
permutation transversals).
    Then there exists $x\in G^\Omega\le G\wr_\Omega \rho(H)$ such that $\operatorname{HW}(G,H,\theta,\iota)=\operatorname{HW}(G,H,\theta,\lambda)^x$.
\end{lemma}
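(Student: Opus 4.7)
The plan is to reduce the statement to the uniqueness result for standard embeddings (Lemma~\ref{lem:uniqueembedding}), and then lift the conjugating element from $\theta(G)^\Omega$ up to $G^\Omega$ along $\tilde{\theta}$.

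First, I would apply Lemma~\ref{lem:uniqueembedding} to obtain some $f \in \theta(G)^\Omega$ such that $\lambda = \Inn_{\theta(G)\wr_\Omega \rho(H)}(f)\circ\iota$; in particular, $\lambda(H) = f\,\iota(H)\,f^{-1}$ inside $\theta(G)\wr_\Omega \rho(H)$. Next, since $\theta|_{G,\theta(G)}\colon G \to \theta(G)$ is surjective, for each $\nu\in\Omega$ I would choose a preimage $x(\nu)\in G$ of $f(\nu)$. The resulting function $x \in G^\Omega$, regarded as the element $(x,1)$ of $G\wr_\Omega \rho(H)$ via the base subgroup identification, then satisfies $\tilde\theta(x) = f$ directly from the definition $\tilde\theta = \theta|_{G,\theta(G)} \wr_{\operatorname{id}_\Omega} \operatorname{id}_{\rho(H)}$, which acts as coordinate-wise postcomposition with $\theta$ on the base subgroup.

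Finally, I would verify that this $x$ does the job by a direct computation. For any $y\in G\wr_\Omega \rho(H)$, since $\tilde\theta$ is a homomorphism by Lemma~\ref{lem:wrhomo}(i), one has
\[
\tilde\theta(x\,y\,x^{-1}) \;=\; \tilde\theta(x)\,\tilde\theta(y)\,\tilde\theta(x)^{-1} \;=\; f\,\tilde\theta(y)\,f^{-1}.
\]
Hence $x\,y\,x^{-1} \in \tilde\theta^{-1}(\lambda(H))$ if and only if $f\,\tilde\theta(y)\,f^{-1} \in f\,\iota(H)\,f^{-1}$, if and only if $\tilde\theta(y)\in\iota(H)$, if and only if $y\in\tilde\theta^{-1}(\iota(H))$. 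This yields $\operatorname{HW}(G,H,\theta,\lambda)^x = \operatorname{HW}(G,H,\theta,\iota)$, as desired.

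There is no serious obstacle here; the only care required is in tracking the base subgroup identifications $G^\Omega \hookrightarrow G\wr_\Omega \rho(H)$ and $\theta(G)^\Omega \hookrightarrow \theta(G)\wr_\Omega \rho(H)$, and in confirming that $\tilde\theta$ restricted to base subgroups is coordinate-wise postcomposition by $\theta$, which is precisely what makes the coordinate-by-coordinate lift of $f$ a valid preimage under $\tilde\theta$.
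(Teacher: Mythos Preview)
Your proof is correct and follows essentially the same approach as the paper's: both invoke Lemma~\ref{lem:uniqueembedding} to obtain a conjugating element in $\theta(G)^\Omega$, lift it to $G^\Omega$ via $\tilde\theta$, and verify that conjugation by the lift carries one hybrid wreath product to the other. The only cosmetic differences are that you construct the lift coordinate-wise (the paper instead cites surjectivity of $\tilde\theta$ from Lemma~\ref{lem:wrhomo} and then argues the preimage lands in the base group), and you finish with a direct element-by-element biconditional where the paper appeals to the correspondence theorem.
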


\begin{proof}
    Let $S:=\rho(H)$.
    By Lemma \ref{lem:uniqueembedding}, there exists $x_1\in \theta(G)^\Omega\le \theta(G)\wr_\Omega S$ such that
    $\lambda=\Inn_{\theta(G)\wr_\Omega S}(x_1)\circ\iota$. 
    Therefore, $\iota(H)=\lambda(H)^{x_1}$.
    By Lemma \ref{lem:wrhomo}, $\tilde{\theta}$ is a surjective homomorphism.
    With the identification of $\theta(G)^\Omega$ and $\theta(G)^\Omega\rtimes 1$,
    let $x\in G\wr_\Omega S$ such that $\tilde{\theta}(x)=x_1$ and
    write $x=(f,\sigma)$.    
    Then by the definition of $\tilde{\theta}$, 
    $\tilde{\theta}(x)=\tilde{\theta}(f,\sigma)=(\theta\circ f,\sigma)=x_1$.
    Therefore $\sigma=1$ which forces $x\in G^\Omega$.
    We claim that $\operatorname{HW}(G,H,\theta,\iota)=\operatorname{HW}(G,H,\theta,\lambda)^x$. 
    Note that \[
    \begin{aligned}
        \tilde{\theta}(\operatorname{HW}(G,H,\theta,\lambda)^x)&=\tilde{\theta}(\operatorname{HW}(G,H,\theta,\lambda))^{\tilde{\theta}(x)}\\
        &=\lambda(H)^{x_1}=\iota(H)=\tilde{\theta}(\operatorname{HW}(G,H,\theta,\iota)).
    \end{aligned}
    \] By the correspondence theorem, we deduce $\operatorname{HW}(G,H,\theta,\iota)=\operatorname{HW}(G,H,\theta,\lambda)^x$.
    
\end{proof}

Lemma \ref{lem:uniquewreath} shows that the hybrid wreath product is independent of the choice of the permutation transversal, up to conjugation.
Throughout the remainder of this paper, we 
write $\operatorname{HW}(G,H,\theta)$ instead of $\operatorname{HW}(G,H,\theta,\iota)$ (if following the notation in Definition \ref{def:gw}).
Also, we write $\operatorname{BW}(G,H,\theta)$ instead of $\operatorname{BW}(G,H,\theta,\iota)$.

The base subgroup $\operatorname{BW}(G,H,\theta)$ of a normal hybrid wreath product admits a concise characterisation via inverse systems (see Lemma~\ref{lem:bw}).
This representation captures the structure of $\operatorname{BW}(G,H,\theta)$ 
and is essential for the proof of Theorem~\ref{thm:comp_l}.

\begin{lemma}\label{lem:bw}
    Assume the notation in Definition \ref{def:gw}. 
    Let $|\Omega|=n$, let $\Omega=\{\nu_1,\ldots,\nu_n\}$ and let $p_\theta$
    be the standard map of $\operatorname{HW}(G,H,\theta)$.
    Let $\mathcal{X}$ be the inverse system given by
    \[\xymatrix@=0.5em{
        G\ar@{.}[dd]_n\ar[rd]^{\Inn(t_{\nu_1}^{-1})\circ \theta}&\\
        &\theta(G)\\
        G\ar[ru]_{\Inn(t_{\nu_n}^{-1})\circ \theta}
    }\]
    If $\operatorname{HW}(G,H,\theta)$ is a normal hybrid wreath product,
    then 
    \begin{itemize}
        \item [\rm (i)] $\operatorname{BW}(G,H,\theta)=\{(f,1)\in G\wr\rho(H)\mid \exists h\in\theta(G): (\theta\circ f,1)=\iota(h)\}$;
        \item [\rm (ii)] for every $\nu\in\Omega$, the evaluation map $p_\nu$ is a surjective homomorphism;
        \item [\rm (iii)] $\operatorname{BW}(G,H,\theta)$, equipped with $(p_{\nu_i})_{1\le i\le n}$ and $p_\theta$,
        is the inverse limit of the inverse system $\mathcal{X}$.

    \end{itemize}
\end{lemma}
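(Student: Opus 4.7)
My plan is to extract one consequence of normality and then unwind the definitions. The key observation would be: if $\theta(G)\trianglelefteq H$, then for every $h\in\theta(G)$ and every $g\in H$ we have $\theta(G)gh=\theta(G)(ghg^{-1})g=\theta(G)g$, so $\rho(h)$ fixes every coset, hence $\rho(h)=1$. Feeding this into the formula defining $\iota$ in Definition~\ref{def:universalembedding}, one obtains $\iota(h)=(f_h,1)$ with $f_h(\nu)=t_\nu h t_\nu^{-1}$ for every $\nu\in\Omega$ (because $\nu^h=\nu$). All three parts of the lemma collapse quickly once this is noted.

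For part (i), I would simply unfold the definition $\operatorname{BW}(G,H,\theta)=\tilde{\theta}^{-1}(\iota(\theta(G)))$: an element $(f,\sigma)$ lies in $\operatorname{BW}$ iff $(\theta\circ f,\sigma)=\iota(h)$ for some $h\in\theta(G)$, and by the observation above this forces $\sigma=1$ and yields exactly the description claimed. For part (ii), given $\nu\in\Omega$ and $g\in G$, I would set $h:=\Inn(t_\nu^{-1})(\theta(g))$, which lies in $\theta(G)$ by normality, then pick, for each $\nu'\neq\nu$, a preimage $f(\nu')\in G$ of $t_{\nu'}ht_{\nu'}^{-1}$ under $\theta$ (possible because $\theta$ surjects onto $\theta(G)$), and set $f(\nu):=g$. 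Then $(\theta\circ f,1)=(f_h,1)=\iota(h)$, so $(f,1)\in\operatorname{BW}$ by (i) and $p_\nu(f,1)=g$. That $p_\nu$ is a homomorphism is immediate, since multiplication inside the base subgroup $G^\Omega\rtimes 1$ is pointwise in the first coordinate.

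For part (iii), I would use Theorem~\ref{thm:limitexists} to describe $\varprojlim(\mathcal{X})$ explicitly as the set of pairs $((g_i)_i,h)\in G^n\times\theta(G)$ satisfying $\Inn(t_{\nu_i}^{-1})(\theta(g_i))=h$ for each $i$, with the obvious coordinate projections. I would then define $\Phi\colon\operatorname{BW}(G,H,\theta)\to\varprojlim(\mathcal{X})$ by $\Phi(f,1):=((f(\nu_i))_i,p_\theta(f,1))$; well-definedness is exactly the translation of the identity $\theta\circ f=f_{p_\theta(f,1)}$ from (i) into the compatibility constraint of $\mathcal{X}$. The inverse would send $((g_i)_i,h)$ to the element $(f,1)$ with $f(\nu_i):=g_i$, which is in $\operatorname{BW}$ again by (i). Both maps are homomorphisms and intertwine the cone maps by construction. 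I expect the main obstacle to be purely bookkeeping, that is, keeping careful track of which coordinates live in $G$, $\theta(G)$, $H$, or $\rho(H)$, and which of the maps $\iota$, $\tilde{\theta}$, $p_\theta$, or $p_\nu$ is being used at each step; the genuine mathematical input beyond unwinding definitions is concentrated in the single normality observation at the start.
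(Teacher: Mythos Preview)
Your proposal is correct and matches the paper's argument closely: the key observation that normality forces $\rho(h)=1$ for $h\in\theta(G)$, hence $\iota(h)=(f_h,1)$ with $f_h(\nu)=t_\nu h t_\nu^{-1}$, is exactly what the paper isolates, and your treatments of (i) and (ii) are essentially identical to the paper's. The only stylistic difference is in (iii): you build an explicit isomorphism to the concrete model of $\varprojlim(\mathcal{X})$ furnished by Theorem~\ref{thm:limitexists}, whereas the paper verifies the universal property of the inverse limit directly on $\operatorname{BW}(G,H,\theta)$; both routes are standard and amount to the same computation.
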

\begin{proof}
    {\rm (i)} 
    Let $B$ be the base subgroup of $\theta(G)\wr\rho(H)$ and let
    $\tilde{B}$ be the base subgroup of $G\wr\rho(H)$.
    It can be seen that $\tilde{\theta}^{-1}(B)=\tilde{B}$. 
    Since $\theta(G)\trianglelefteq H$, we have that $\ker(\rho)=\theta(G)$.
    Then for every $h\in \theta(G)$, it holds that $\iota(h)=(f_h,\rho(h))=(f_h,1)\in B$.
    This implies that $\iota(\theta(G))\le B$.
    Therefore, $\operatorname{BW}(G,H,\theta)=\tilde{\theta}^{-1}(\iota(\theta(G)))\le \tilde{B}$.
    Let $(f,1)\in \operatorname{BW}(G,H,\theta)$.
    Then $\tilde{\theta}(f,1)=(\theta\circ f,1)\in \iota(\theta(G))$ which implies
    that $(\theta\circ f, 1)=\iota(h)$ for some $h\in\theta(G)$.
    Therefore, \[\operatorname{BW}(G,H,\theta)\le \{(f,1)\in G\wr\rho(H)\mid (\theta\circ f,1)=\iota(h),\mbox{ for some }h\in \theta(G) \}.\]
    In the other direction, note that $(\theta\circ f,1)=\iota(h)$ for some $h\in\theta(G)$
    implies that $(f,1)\in \tilde{\theta}^{-1}(\iota(\theta(G)))=\operatorname{BW}(G,H,\theta)$.
    We thereby have 
    \[\operatorname{BW}(G,H,\theta)\ge \{(f,1)\in G\wr\rho(H)\mid (\theta\circ f,1)=\iota(h),\mbox{ for some }h\in \theta(G) \}.\]
    This completes the proof of {\rm (i)}.

    {\rm (ii)} Let $\nu\in\Omega$. By {\rm (i)}, let $(f_1,1),(f_2,1)\in\operatorname{BW}(G,H,\theta)$.
    Then $p_\nu((f_1,1)(f_2,1))=p_\nu((f_1f_2,1))=f_1f_2(\nu)=f_1(\nu)f_2(\nu)=p_\nu((f_1,1))p_\nu((f_2,1))$.
    Hence $p_\nu$ is a homomorphism. 
    Now we show that $p_\nu$ is surjective.
    Let $g\in G$ and let $h:=t_\nu^{-1} \theta(g)t_\nu$.
    Since $\theta(G)\trianglelefteq H$, we have that $h\in \theta(G)$.
    Recall that $\ker(\rho)=\theta(G)$.
    Then by Definition \ref{def:universalembedding}, 
    $\iota(h)=(f_h,1)$ where $f_h(\mu)=t_\mu ht_{\mu^y}^{-1}=t_\mu ht_{\mu}^{-1}$ for every $\mu\in\Omega$.
    In particular, $f_h(\nu)=\theta(g)$.
    For every $\mu\in\Omega$, there exists an element $k_\mu\in G$
    such that $\theta(k_\mu)=f_h(\mu)$.
    Let $f\in G^\Omega$ be given by $f(\mu)=k_\mu$ for $\mu\ne\nu$ 
    and $f(\nu)=g$.
    Then $\theta\circ f=f_h$ which implies that $(\theta\circ f,1)=\iota(h)$.
    Therefore, $(f,1)\in \operatorname{BW}(G,H,\theta)$ by {\rm (i)}.
    This follows that $p_\nu((f,1))=f(\nu)=g$.
    This completes the proof of {\rm (ii)}.

    {\rm (iii)} 
    Let $(f,1)\in \operatorname{BW}(G,H,\theta)$ and let $g:=p_\theta((f,1))$.
    By Definition \ref{def:gw}, 
    $g\in\theta(G)=\ker(\rho)$ and $\iota(g)=\tilde{\theta}((f,1))=(\theta\circ f,1)$.
    Then for every $\nu\in\Omega$, $(\theta\circ f)(\nu)=t_{\nu}gt_{\nu^g}^{-1}=t_{\nu}gt_{\nu}^{-1}$ 
    and
    \[\begin{aligned}
        \Inn(t_{\nu}^{-1})\circ\theta\circ p_{\nu}((f,1))&=\Inn(t_{\nu}^{-1})(t_{\nu}gt_{\nu}^{-1})\\
        &=p_\theta((f,1))
    \end{aligned}
    \]
    Therefore, we have $\Inn(t_{\nu_i}^{-1})\circ\theta\circ p_{\nu_i}=p_\theta$ for $i\in\{1,\ldots,n\}$.
    Let $X$ be a group equipped with homomorphisms 
    $q_i:X\rightarrow G$ for $i\in\{1,\ldots,n\}$ 
    and $q_\theta:X\rightarrow \theta(G)$,
    such that $\Inn(t_{v_i}^{-1})\circ\theta\circ q_i=q_\theta$ for every $i\in\{1,\ldots,n\}$.
    Let $x\in X$. Define $g_x\in G^\Omega$ via
    $g_x(\nu_i):=q_i(x)$ for every $i\in\{1,\ldots,n\}$.
    Then \[\begin{aligned}
        (\theta\circ g_x)(\nu_i)&=(\theta\circ q_i)(x)\\
        &=(\Inn(t_{\nu_i})\circ \Inn(t_{\nu_i}^{-1})\circ \theta\circ q_i)(x)\\
        &=(\Inn(t_{\nu_i})\circ q_\theta)(x)\\
        &=t_{\nu_i}(q_\theta(x))t_{\nu_i}^{-1}\\
        &=t_{\nu_i}(q_\theta(x))t_{\nu_i^{q_\theta(x)}}^{-1}
    \end{aligned}
    \]
    Therefore, by the definition of $\iota$, 
    $\iota(q_\theta(x))=(\theta\circ g_x,1)$.
    Then by {\rm (i)},
    $(g_x,1)\in \operatorname{BW}(G,H,\theta)$.
    Let $u:X\rightarrow\operatorname{BW}(G,H,\theta)$ be given by $u(x):=(g_x,1)$.
    Let $x_1,x_2\in X$. Note that $g_{x_1x_2}(\nu_i)=q_i(x_1x_2)=q_i(x_1)q_i(x_2)=g_{x_1}(\nu_i)g_{x_2}(\nu_i)$
    for every $i\in\{1,\ldots,n\}$. 
    Then $g_{x_1x_2}=g_{x_1}g_{x_2}$
    which implies that 
    \[u(x_1x_2)=(g_{x_1x_2},1)=(g_{x_1},1)(g_{x_2},1)=u(x_1)u(x_2).\]
    Therefore, $u$ is a homomorphism. 
    It can be seen that $p_{\nu_i}\circ u=q_i$ (see Figure~\ref{fig:lembw2}).
    \begin{figure}
        \centering
        \[\xymatrix@=0.5em{
            &&&&&&&G\ar@{.}[dddd]_(.3)n\ar[rdd]^{\Inn(t_{\nu_1}^{-1})\circ \theta}&\\
            &&&&&&&&&&\\
            X\ar@{-->}[rrr]^{u\ \ \ \ \ }\ar@/^/@{-->}[rrrrrruu]^{q_1}\ar@/_/@{-->}[rrrrrrdd]_{q_n}&&&\operatorname{BW}(G,H,\theta)\ar@{-->}[rrrruu]^{p_{\nu_1}}\ar@{-->}[rrrrdd]_{p_{\nu_n}}\ar@{-->}[rrrrr]^{p_\theta}&&&& &\theta(G)\\
            &&&&&&&&&&&&\\
            &&&&&&&G\ar[ruu]_{\Inn(t_{\nu_n}^{-1})\circ \theta}
        }\]
        \caption{Diagram for the proof of Lemma \ref{lem:bw}}
        \label{fig:lembw2}
    \end{figure}
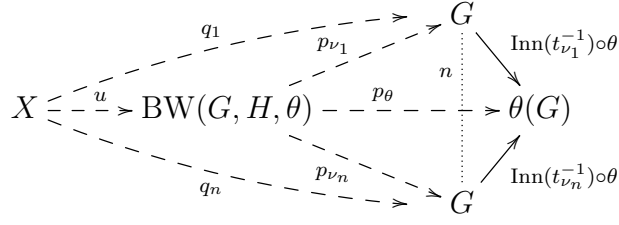
    Let $u_1:X\rightarrow \operatorname{BW}(G,H,\theta)$ be a homomorphism
    such that $p_{\nu_i}\circ u_1=q_i$.
    For $x\in X$, let $u_1(x):=(g'_x,1)$. Then 
    $g'_x(\nu_i)=(p_{\nu_i}\circ u_1)(x)=q_i(x)=g_x(\nu_i)$
    for $i\in\{1,\ldots,n\}$. Hence $g'_x=g_x$ and $u_1=u$.
    By the definition of inverse limits, this concludes the proof.
\end{proof}
\section{Motivation for main results}
\label{sec:appendix}
This section is a sketch of the ideas of the proof of Theorem \ref{thm:comp_l}. 
It is written in a somewhat informal manner and for illustration purpose only.

\begin{notation}
    We use subscripts to distinguish extensions,
    for example, $A._1B$ and $A._2B$ denote two different extensions of $B$ by $A$.
    In addition, if $A\le N\le A._1B$  
    such that $N/A\cong C\le B$, then we write $N=A._1C$.
\end{notation}

\subsection{Normal series of length 2}\label{subsec:l2}

Let $L_1$ and $L_2$ have compatible normal series of length $2$.
Write $L_1=A._1B$ and $L_2=A._2B$.
Let $\pi_1:L_1\rightarrow B$ and $\pi_2:L_2\rightarrow B$ be the natural projections.
Let $\mathcal{X}$ be the inverse system shown in the right half of Figure \ref{fig:length2}
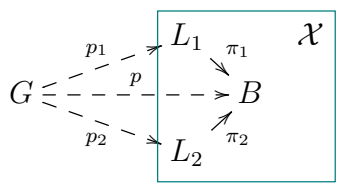
\begin{figure}
    \[\xymatrix@=0.5em{
        &&&&L_1\ar[rd]^{\pi_1}&&\mathcal{X}\\
        G\ar@{-->}[rrrru]^{p_1}\ar@{-->}[rrrrd]_{p_2}\ar@{-->}[rrrrr]^{p}&&&&&B&\\
        &&&&L_2\ar[ru]_{\pi_{2}}&&
        \save "1,5"."3,7"*+[F:teal]\frm{}\restore
}\]
\caption{Inverse system $\mathcal{X}$ and its inverse limit}
\label{fig:length2}
\end{figure}
and let $G$ be the inverse limit of $\mathcal{X}$ associated with 
the maps $p_1:G\rightarrow L_1$, $p_2:G\rightarrow L_2$ and $p:G\rightarrow B$ (see Figure \ref{fig:length2}).
By Theorem \ref{thm:projection} {\rm (iii)}, $p_1$ and $p_2$ are surjective
and with the aid of Theorem \ref{thm:projection} {\rm (i)} and Theorem \ref{thm:limpullback}, 
one can check that $\ker(p_1)\cong A\cong\ker(p_2)$. We also point out that these properties can alternatively be verified by considering the explicit construction
\[G=\{(l_1,l_2,b)\in L_1\times L_2\times B\mid \pi_1(l_1)=\pi_2(l_2)=b\}.\]

Therefore, $(G,p_1,p_2)$ is a witness system for the compatibility of $(L_1,L_2)$.
See Lemma \ref{lem:length2} for a more formal proof along these lines.

\subsection{Normal series of length 3: method based on mixed groups}\label{subsec:l3}
Now we move on to the case of normal series of length $3$.
We first discuss a method based on `mixed groups'.
This method is not used in the proof of our main result and is less powerful,
but it inspired our research and is easier to understand.

Let $L_1$ and $L_2$ have compatible normal series of length $3$.
Write $L_1=A._1B._1C$ and $L_2=A._2B._2C$. Let $\pi_{11}:L_1\rightarrow B._1C$, $\pi_{22}:L_2\rightarrow B._2C$, $\tau_1:B._1C\rightarrow C$
and $\tau_2:B._2C\rightarrow C$ be the natural projections.
Let $M_1$ and $M_2$ also have normal series with the same factors in the same order as $L_1$ and $L_2$.
Suppose further that $M_1$ and $M_2$ satisfy additional `mixed' conditions: there exist surjective homomorphisms $\pi_{21}:M_1\rightarrow B._1C$ and $\pi_{12}:M_2\rightarrow B._2C$
such that $\pi_{21}^{-1}(B)\cong \pi_{22}^{-1}(B)$ and $\pi_{12}^{-1}(B)\cong\pi_{11}^{-1}(B)$.
Then we say that $M_1$ and $M_2$ are \emph{mixed groups} of $L_1$ and $L_2$ and we write $M_1=A._2B._1C$ and $M_2=A._1B._2C$.

Let $\mathcal{X}$ be the inverse system shown in the right half of Figure \ref{fig:length3}
\begin{figure}
    \[\xymatrix@=0.5em{
    &&&&L_1\ar[rd]^{\pi_{11}}&&\mathcal{X}&&&&\\
    &&&&&B._1C\ar[rdd]^{\tau_1}&&&&&&\\
    &&&&M_1\ar[ru]_{\pi_{21}}&&&&&\\
    G\ar@{-->}[rrrrddd]_{p_2}\ar@{-->}[rrrruuu]^{p_1}&&&&&&C\\
    &&&&M_2\ar[rd]^{\pi_{12}}&&&&&\\
    &&&&&B._2C\ar[ruu]^{\tau_2}&&&&&\\
    &&&&L_2\ar[ru]_{\pi_{22}}&&&&&
    \save "1,5"."7,7"*++[F:teal]\frm{}\restore
}\]
\caption{Inverse system $\mathcal{X}$ and its inverse limit}
\label{fig:length3}
\end{figure}
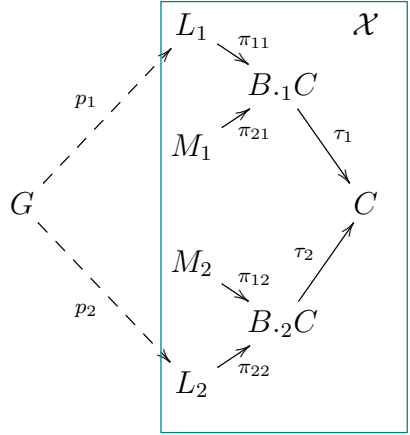
and let $G$ be the inverse limit of $\mathcal{X}$ with associated maps $p_1:G\rightarrow L_1$, $p_2:G\rightarrow L_2$.
(Here and in the remainder of this section, we always just name the limit projections we need.)
For $\delta\in\{1,2\}$, let $\mathcal{Y}_\delta$ be the inverse system given by
\[\xymatrix@=0.5em{
    A._1B\ar[rrrrrd]^{\pi_{1\delta}|_{A._1B,B}}\\
    &&&&&B\\
    A._2B\ar[rrrrru]_{\pi_{2\delta}|_{A._2B,B}}
}\]
and let $K_\delta$ 
be the inverse limit of $\mathcal{Y}_\delta$.

By Theorem \ref{thm:projection} {\rm (iii)}, $p_1$ and $p_2$ are surjective
and using Theorem \ref{thm:projection} {\rm (i)} and Theorem \ref{thm:limpullback}, 
one can check that 
$\ker(p_1)\cong A\times K_1$ and $\ker(p_2)\cong A\times K_2$.
If $K_1\cong K_2$, then $(G,p_1,p_2)$ is a witness system for the compatibility of $(L_1,L_2)$.
Therefore, to turn this argument into a proof for the compatibility of $L_1$ and $L_2$, we would need positive answers to the following two questions:

\begin{itemize}
    \item [\rm (i)] Do `mixed groups' like $M_1$ and $M_2$ exist?
    \item [\rm (ii)] If $M_1$ and $M_2$ exist, does $K_1\cong K_2$ hold?
\end{itemize}

The answers are sometimes positive and sometimes negative. 
The following is an example where one of $M_1$ and $M_2$ does not exist.

\begin{example}\label{examp:F21S3}
    Let $L_1:=\operatorname{F}_{21}\times \mathbb{Z}_2$ and let $L_2:=\mathbb{Z}_7\times \operatorname{S}_3$.
    One can check that both $L_1$ and $L_2$ are of the form $\mathbb{Z}_7.\mathbb{Z}_3.\mathbb{Z}_2$
    so they have compatible normal series of length $3$.
    Write $L_1=(\mathbb{Z}_7)._1(\mathbb{Z}_3)._1(\mathbb{Z}_2)$ and $L_2=(\mathbb{Z}_7)._2(\mathbb{Z}_3)._2(\mathbb{Z}_2)$.
    Suppose that $M$ is of form $(\mathbb{Z}_7)._1(\mathbb{Z}_3)._2(\mathbb{Z}_2)$.
    By the Schur-Zassenhaus Theorem, $M=\mathbb{Z}_7\rtimes_\varphi \operatorname{S}_3$ for
    some homomorphism $\varphi:\operatorname{S}_3\rightarrow\Aut(\mathbb{Z}_7)$.
    Since $\Aut(\mathbb{Z}_7)$ is cyclic, 
    the subgroup $\mathbb{Z}_3$ of $\operatorname{S}_3$ lies in $\ker(\varphi)$,
    which implies that the subgroup $\mathbb{Z}_3$ centralises $\mathbb{Z}_7$, but this contradicts
    the fact that $\operatorname{F}_{21}=(\mathbb{Z}_7)._1\mathbb{Z}_3$.
    Therefore, the `mixed group' $M$ does not exist.
    On the other hand, there exists a group of the form $\mathbb{Z}_7^2\rtimes\operatorname{S}_3$ (see Example~\ref{examp:hybrid}),
    and as we shall see in Section~\ref{sec:length3hybrid}, this fact 
    allows us to construct a witness system for the compatibility of $L_1$ and $L_2$.
\end{example}

We note that there are also examples violating {\rm (ii)},
but it is difficult to explain this without the formal definition of `mixed groups',
so we will skip this.
We have seen that the method illustrated in this section is not applicable to $L_1$ and $L_2$
from Example \ref{examp:F21S3}.
We will give a stronger method in Section~\ref{sec:length3hybrid} and it is not hard
to prove that this method works for $L_1$ and $L_2$ from Example \ref{examp:F21S3}.
Indeed, this follows immediately from Corollary~\ref{cor:square-free}.

\subsection{Normal series of length 3: method based on hybrid wreath product}\label{subsec:l3+}
\label{sec:length3hybrid}

Let $B$ be a group and let $\alpha_1,\ldots,\alpha_n\in\Aut(B)$.
Then $\{(\alpha_1(b),\ldots,\alpha_n(b))\in B^n|b\in B\}$ is a \emph{full diagonal subgroup} of $B^n$.
Now given a group $A._1B$, we define a new group $A^n._1B$ as follows.
\begin{notation}\label{not:hybrid}
    Let $X:=A._1B$ and 
    let $\eta:X^n\rightarrow B^n$ be the natural projection.
    Let $\alpha_1,\ldots,\alpha_n\in\Aut(B)$ and let
    $D:=\{(\alpha_1(b),\ldots,\alpha_n(b))\in B^n|b\in B\}\le B^n$ be a full diagonal subgroup.
    Let $Y:=\eta^{-1}(D)$
    be the full preimage of $D$.
    We then write $Y=A^n._{1}B$.
\end{notation}

Adopting the notation above, we have
that for every $i\in\{1,\ldots,n\}$, 
\[Y/(A\times\cdots \times A\times \underset{i\mbox{-th}}{1}\times A\times\cdots \times A)\cong X.\]
Let $\pi:X\rightarrow B$ be the natural projection, and for $i\in\{1,\ldots,n\}$, let $\eta_i:Y\rightarrow X$ be the natural projection with respect to $A\times\cdots \times A\times \underset{i\mbox{-th}}{1}\times A\times\cdots \times A$.
Note that $\eta=\underbrace{\pi\times\cdots\times\pi}_n$.
One can also check that  
$Y$ equipped with $\eta_1,\ldots,\eta_n,\eta$ is the inverse limit of the inverse system shown in the right part of Figure \ref{fig:invconvention}.
\begin{figure}
    \[\xymatrix@=0.5em{
    &&&&&X\ar@{.}[dd]_(.3)n\ar[rd]^{\alpha_1\circ\pi}&\\
    Y\ar@{-->}[rrrrru]^{\eta_1}\ar@{-->}[rrrrrd]_{\eta_n}\ar@{-->}[rrrrrr]^{\eta}&&&&&&B\\
    &&&&&X\ar[ru]_{\alpha_n\circ\pi}&
    \save "1,6"."3,7"*++[F:teal]\frm{}
    \restore
}\]

\caption{$Y$ as the inverse limit of an inverse system}
\label{fig:invconvention}
\end{figure}
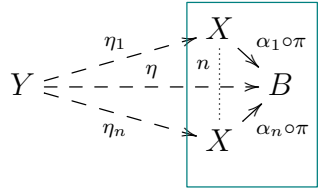

\begin{example}\label{examp:hw}
    Let $G=A._1B$ and let $H=B._2C$.
    Let $\pi:G\rightarrow B$ be the natural projection and let 
    $\iota:B\rightarrow H$ be the natural embedding.
    Recall Notation \ref{not:hybrid}.
    Using the definition of hybrid wreath product and Lemma \ref{lem:bw} {\rm (iii)}, 
    one can check that $\operatorname{HW}(G,H,\iota\circ\pi)\cong A^{|C|}._1B._2C$
    and $\operatorname{BW}(G,H,\iota\circ\pi)\cong A^{|C|}._1B$.
    See Example~\ref{examp:hybrid} for an illustrative example for this construction.
\end{example}

Let $L_1:=A._1B._1C$ and let $L_2:=A._2B._2C$.
Instead of assuming that there exist groups $A._2B._1C$ and $A._1B._2C$ as in Section \ref{subsec:l3},
we assume that there exist groups $M_1:=A^n._2B._1C$ and $M_2:=A^n._1B._2C$ for some $n$.
Choosing $n=|C|$, Example \ref{examp:hw} shows that such $M_1$ and $M_2$ exist. (We just let $M_1$ and $M_2$ be the corresponding hybrid wreath products.)
Let $\pi_{11}:L_1\rightarrow B._1C$, $\pi_{21}:M_1\rightarrow B._1C$, $\pi_{12}:M_2\rightarrow B._2C$, 
$\pi_{22}:L_2\rightarrow B._2C$,
$\tau_1:B._1C\rightarrow C$ and $\tau_2:B._2C\rightarrow C$ be the natural projections.
Let $\alpha_{1;1},\ldots,\alpha_{n;1}\in\Aut(B._1C),\alpha_{1;2},\ldots,\alpha_{n;2}\in\Aut(B._2C)$ and let $\mathcal{X}$ be the inverse system shown in Figure~\ref{fig:inv-X}.
(There are $n$-copies of $L_1$ and $n$-copies of $L_2$ in $\mathcal{X}$.)
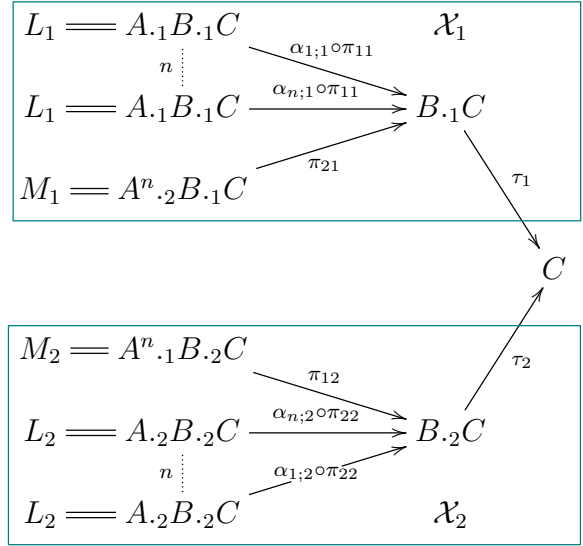
\begin{figure}[ht]
    \centering
    \[\xymatrix@=1.2em{
    L_1\ar@{=}[r]&A._1B._1C\ar@{.}[d]_n\ar[rrrd]^{\alpha_{1;1}\circ\pi_{11}}&&&\mathcal{X}_1\\
    L_1\ar@{=}[r]&A._1B._1C\ar[rrr]^{\alpha_{n;1}\circ\pi_{11}}&&&B._1C\ar[rdd]^{\tau_1}&\\
    M_1\ar@{=}[r]&A^n._2B._1C\ar[rrru]_{\pi_{21}}&&&&&&&\\
    &&&&&C\\
    M_2\ar@{=}[r]&A^n._1B._2C\ar[rrrd]^{\pi_{12}}&&&&&&&\\
    L_2\ar@{=}[r]&A._2B._2C\ar@{.}[d]_n\ar[rrr]^{\alpha_{n;2}\circ\pi_{22}}&&&B._2C\ar[ruu]_{\tau_2}&&\\
    L_2\ar@{=}[r]&A._2B._2C\ar[rrru]|{\alpha_{1;2}\circ\pi_{22}}&&&\mathcal{X}_2&
    \save "1,1"."3,6"*+[F:teal]\frm{}
    \restore
    \save "5,1"."7,6"*+[F:teal]\frm{}
    \restore
}\]
\caption{Inverse system $\mathcal{X}$}
\label{fig:inv-X}
\end{figure}
How to choose the automorphisms $\alpha_{i;\delta}$ is explained in Lemma \ref{lem:comp_l}.

Let $G$ equipped with $p_1:G\rightarrow L_1$, $p_2:G\rightarrow L_2$ be the inverse limit of $\mathcal{X}$.
Let $\mathcal{Y}_1$ be the inverse system given by
\[\xymatrix@=1em{
    A^n._1B\ar[rrrrrd]^{\pi_{12}}\\
    A._2B\ar@{.}[d]_n\ar[rrrrr]_{\alpha_{n;2}\circ\pi_{22}\;}&&&&&B\\
    A._2B\ar[rrrrru]_{\;\alpha_{1;2}\circ\pi_{22}}
}\]
Here we are abusing the notation to denote the restriction of the corresponding transition maps in $\mathcal{X}$.
Similarly, let $\mathcal{Y}_2$ be the inverse system given by
\[\xymatrix@=1em{
    A._1B\ar@{.}[d]_n\ar[rrrrrd]^{\alpha_{1;1}\circ\pi_{11}}\\
    A._1B\ar[rrrrr]^{\alpha_{n;1}\circ\pi_{11}}&&&&&B\\
    A^n._2B\ar[rrrrru]_{\pi_{21}}
}\]
For $\delta\in\{1,2\}$, let $K_\delta$
be the inverse limit of $\mathcal{Y}_\delta$.
One can check that
$\ker(p_1)\cong A^{2n-1}\times K_1$ and $\ker(p_2)\cong A^{2n-1}\times K_2$.
If $K_1\cong K_2$, then $(G,p_1,p_2)$ is a witness system for the compatibility of $(L_1,L_2)$.
In general, whether $K_1\cong K_2$ holds depends on the choice of the automorphisms $\alpha_{i;\delta}$.
We can add extra hypotheses on 
the compatible normal series of $L_1$ and $L_2$ so that suitable $\alpha_{i;\delta}$ exists (see Corollary \ref{cor:compnormalseries}).

\subsection{Longer normal series: a recursive method}
For longer series, we can use a recursive construction.
We illustrate this construction for series of length $3$.

As in Section~\ref{subsec:l3+}, let $L_1:=A._1B._1C$, let $L_2:=A._2B._2C$ and
let $M_1:=A^n._2B._1C$ and $M_2:=A^n._1B._2C$ for some $n$.
Let $\mathcal{X}_1$ be the inverse system given by
\[\xymatrix@=0.5em{
    L_1\ar@{=}[r]&A._1B._1C\ar@{.}[d]_n\ar[rd]\\
    L_1\ar@{=}[r]&A._1B._1C\ar[r]&B._1C\\
    M_1\ar@{=}[r]&A^n._2B._1C\ar[ru]
}\]
and let $\mathcal{X}_2$ be the inverse system given by
\[\xymatrix@=0.5em{
    M_2\ar@{=}[r]&A^n._1B._2C\ar[rd]\\
    L_2\ar@{=}[r]&A._2B._2C\ar@{.}[d]_n\ar[r]&B._2C\\
    L_2\ar@{=}[r]&A._2B._2C\ar[ru]
}\]
For each $i\in\{1,2\}$, there are $n$-copies of $L_i$ in $\mathcal{X}_i$ but the projections to $B._iC$ may be different.
The transition maps of $\mathcal{X}_1$ and $\mathcal{X}_2$ correspond to the transition maps shown in Figure \ref{fig:inv-X}.
For $\delta\in\{1,2\}$, let $\widetilde{L}_\delta$
be the inverse limit of $\mathcal{X}_\delta$.
Let $\mathcal{Y}_1$ be the inverse system given by
\[\xymatrix@=0.5em{
    A^n._1B\ar[rd]\\
    A._2B\ar@{.}[d]_n\ar[r]&B\\
    A._2B\ar[ru]
}\]
where the transition maps are restrictions of the corresponding transition maps in $\mathcal{X}_1$.
Similarly, let $\mathcal{Y}_2$ be the inverse system given by
\[\xymatrix@=0.5em{
    A._1B\ar@{.}[d]_n\ar[rd]\\
    A._1B\ar[r]&B\\
    A^n._2B\ar[ru]
}\]
where the transition maps are restrictions of the corresponding transition maps in $\mathcal{X}_2$.
For $\delta\in\{1,2\}$, let $K_\delta$
be the inverse limit of $\mathcal{Y}_\delta$.
One can check that $\widetilde{L}_\delta=(K_\delta)._\delta (C)$.

If $K_1\cong K_2$, then we reduce to the case of length $2$, as discussed in Section~\ref{subsec:l3}.
Using the method in Section~\ref{subsec:l2}, let $(G,p_1,p_2)$ be the witness system for the compatibility of $(\widetilde{L}_1,\widetilde{L}_2)$.
One can check that there exists $N_\delta\trianglelefteq \widetilde{L}_\delta$ such that: 
$N_\delta\cong A^{2n-1}$, $L_\delta\cong\widetilde{L}_\delta/N_\delta$ and $p_\delta^{-1}(N_\delta)\cong N_\delta\times\ker(p_\delta)$ (see Lemma \ref{lem:length2}).
Let $\pi_\delta:\widetilde{L}_\delta\rightarrow L_\delta$ be the natural projection with respect to $N_\delta$.
Then $(G,\pi_1\circ p_1,\pi_2\circ p_2)$ is a witness system for the compatibility of $(L_1,L_2)$.

For longer series, we follow the same idea.
Let $\mathcal{C}_2$ be the collection of all compatible normal series of length $2$.
Let $\ell\ge 3$ and let $\mathcal{C}_\ell$ be a collection of compatible normal
series of length $\ell$ satisfying a certain extra hypothesis (see Corollary \ref{cor:compnormalseries}).
Given $((A_1)._1\ldots._1(A_\ell),(A_1)._2\ldots._2(A_\ell))\in\mathcal{C}_\ell$, using the method illustrated above,
we define $K_1$ and $K_2$. 
The extra hypothesis mentioned above guarantees that $K_1\cong K_2$.
Now we prove that $((K_1)._1(A_3)._1\ldots.(A_\ell),(K_2)._2(A_3).\ldots._2(A_\ell))\in\mathcal{C}_{\ell-1}$, and
then we prove that if for all pairs in $\mathcal{C}_{\ell-1}$, we can construct
a `good' witness system, then we can construct a `good' witness system for $((A_1)._1\ldots._1(A_\ell),(A_1)._2\ldots._2(A_\ell))$
(for the definition of `good', see Definition \ref{def:goodwit}). 
This guarantees that the induction works for all series in each $\mathcal{C}_\ell$.

\section{Main results}\label{sec:mainresult}
In this section, we present and prove our main result, Corollary~\ref{cor:compnormalseries}, which provides a sufficient condition for two groups to be compatible.

\subsection{Group sequences}
We first introduce a concept equivalent to normal series, 
which is more convenient in the context of inverse systems.
\begin{definition}\label{def:groupsequence+surjectivesequence}
    Let $\ell$ be an integer. 
    Let $S_{0}:=1$, for every $i\in\{1,\ldots,\ell\}$, 
    let $S_{i}$ be a group and let $\pi_i:S_{i}\rightarrow S_{i-1}$ be a homomorphism.
    We say that 
    \[((S_{i})_{0\le i\le\ell},(\pi_{i})_{1\le i\le\ell})\]
    is a \emph{group sequence} of length $\ell$.
    If for every $i\in\{1,\ldots,\ell\}$, $\pi_i:S_{i}\rightarrow S_{i-1}$ is a surjective homomorphism, then we say that
    the group sequence is \emph{surjective}.
    
    Let $S_1:=((S_{i;1})_{0\le i\le\ell},(\pi_{i;1})_{1\le i\le\ell})$ and $S_2:=((S_{i;2})_{0\le i\le \ell},(\pi_{i;2})_{1\le i\le \ell})$
    be surjective group sequences of length $\ell$.
    If for every $i\in\{1,\ldots,\ell\}$, $\ker(\pi_{i;1})\cong\ker(\pi_{i;2})$,
    then we say that $S_1$ and $S_2$ are \emph{compatible group sequences}.
\end{definition}

For convenience, we sometimes write $((S_{i}),(\pi_{i}))$ 
rather than $((S_{i})_{0\le i\le\ell},(\pi_{i})_{1\le i\le \ell})$.

Now we show that compatible normal series of length $\ell$ can be expressed as compatible group sequences of length $\ell$ and vice versa. This illustrates that we can use these concepts interchangeably.
\begin{lemma}
    Let $L_1$ and $L_2$ be groups.
    Then $L_1$ and $L_2$ have compatible normal series of length $\ell$ if and only if
    there exists compatible group sequences 
    $((S_{i;1})_{0\le i\le\ell},\\(\pi_{i;1})_{1\le i\le\ell})$
    and $((S_{i;2})_{0\le i\le \ell},(\pi_{i;2})_{1\le i\le\ell})$ such that $L_1=S_{\ell;1}$ and $L_2=S_{\ell;2}$.
\end{lemma}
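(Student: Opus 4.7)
The plan is to exhibit explicit bijective translations between compatible normal series and compatible surjective group sequences, and to verify that each direction preserves compatibility. The proof is essentially a translation exercise; I do not anticipate a substantive obstacle, but the main subtlety to watch is the index-reversal convention: the factor at position $i$ of a normal series corresponds to the kernel at position $\ell - i + 1$ of the associated group sequence, reflecting the fact that the sequence describes quotients $L \twoheadrightarrow \cdots \twoheadrightarrow 1$, while the normal series ascends from $1$ to $L$.

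For the forward direction, suppose $L_\delta$ has a normal series $1 = N_{0;\delta} \trianglelefteq \cdots \trianglelefteq N_{\ell;\delta} = L_\delta$ for $\delta \in \{1,2\}$, and that these two series are compatible. I would set $S_{i;\delta} := L_\delta/N_{\ell-i;\delta}$, together with the natural projection $\pi_{i;\delta}\colon S_{i;\delta} \to S_{i-1;\delta}$ induced by $N_{\ell-i;\delta} \trianglelefteq N_{\ell-i+1;\delta}$. Then $S_{0;\delta} = 1$, $S_{\ell;\delta} = L_\delta$, each $\pi_{i;\delta}$ is surjective, and the third isomorphism theorem gives $\ker(\pi_{i;\delta}) \cong N_{\ell-i+1;\delta}/N_{\ell-i;\delta}$. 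Since compatible normal series have the same factors in the same order, it follows that $\ker(\pi_{i;1}) \cong \ker(\pi_{i;2})$ for every $i$, so the resulting group sequences are compatible in the sense of Definition \ref{def:groupsequence+surjectivesequence}.

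For the converse, starting from compatible surjective group sequences $((S_{i;\delta}),(\pi_{i;\delta}))$, I would set $L_\delta := S_{\ell;\delta}$ and define $q_{i;\delta}\colon L_\delta \to S_{i;\delta}$ recursively by $q_{\ell;\delta} := \mathrm{id}_{L_\delta}$ and $q_{i-1;\delta} := \pi_{i;\delta} \circ q_{i;\delta}$. Each $q_{i;\delta}$ is a composition of surjections, hence surjective. Setting $N_{i;\delta} := \ker(q_{\ell-i;\delta})$ produces a chain of normal subgroups of $L_\delta$ with $N_{0;\delta} = 1$ and $N_{\ell;\delta} = L_\delta$; the inclusions $N_{i-1;\delta} \trianglelefteq N_{i;\delta}$ hold because $q_{\ell-i;\delta}$ factors through $q_{\ell-i+1;\delta}$. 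Applying the first isomorphism theorem to the surjection $q_{\ell-i+1;\delta}\colon L_\delta \to S_{\ell-i+1;\delta}$ gives $N_{i;\delta}/N_{i-1;\delta} \cong \ker(\pi_{\ell-i+1;\delta})$, and the hypothesis $\ker(\pi_{i;1}) \cong \ker(\pi_{i;2})$ then transfers directly to compatibility of the two normal series of $L_1$ and $L_2$.
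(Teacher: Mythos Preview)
Your proof is correct and follows essentially the same approach as the paper: both directions use the same index-reversing translation $S_{i;\delta}=L_\delta/N_{\ell-i;\delta}$ (forward) and $N_{i;\delta}=\ker(q_{\ell-i;\delta})$ (converse), with the factor identification $N_{i;\delta}/N_{i-1;\delta}\cong\ker(\pi_{\ell-i+1;\delta})$. Your explicit recursive definition of the composite maps $q_{i;\delta}$ is a slightly cleaner way to package what the paper writes out as a chain of compositions, but the argument is otherwise identical.
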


\begin{proof}
    Let $L_1$ and $L_2$ have compatible normal series
    \[1=L_{0;1}\trianglelefteq L_{1;1}\trianglelefteq\cdots\trianglelefteq L_{\ell;1}=L_1\]
    and
    \[1=L_{0;2}\trianglelefteq L_{1;2}\trianglelefteq\cdots\trianglelefteq L_{\ell;2}=L_2.\]
    For $\delta\in\{1,2\}$ and $i\in\{0,\ldots,\ell\}$, let $S_{i;\delta}:=L_{\ell;\delta}/L_{\ell-i;\delta}$.
    For $i\in\{1,\ldots,\ell\}$, let $\pi_{i;\delta}:S_{i;\delta}\rightarrow S_{i-1;\delta}$
    be the homomorphism given by $xL_{\ell-i;\delta}\mapsto xL_{\ell-i+1;\delta}$.
    Note that $\pi_{i;\delta}$ is surjective.
    For $\delta\in\{1,2\}$, let $S_\delta:=((S_{i;\delta}),(\pi_{i;\delta}))$.
    It follows that $\ker(\pi_{i;1})=L_{\ell-i+1;1}/L_{\ell-i;1}\cong L_{\ell-i+1;2}/L_{\ell-i;2}=\ker(\pi_{i;2})$.
    Hence the group sequences $S_1$ and $S_2$ are compatible.

    Conversely, let $L_1$ and $L_2$ have compatible surjective group sequences as in the statement of the lemma.
    For every $\delta\in\{1,2\}$ and $i\in\{0,\ldots,\ell\}$,
    let $L_{i;\delta}:=\ker(\pi_{\ell;\delta}\circ\pi_{\ell-1;\delta}\circ\cdots\circ\pi_{i;\delta})$.
    Note that $L_{i;\delta}$ is normal in $L_{\delta}$.
    It follows that for every $i\in\{1,\ldots,\ell\}$,
    $L_{i;1}/L_{i-1;1}\cong \ker(\pi_{\ell-i+1;1})\cong\ker(\pi_{\ell-i+1;2})\cong L_{i;2}/L_{i-1;2}.$
    Therefore, \[1=L_{0;1}\trianglelefteq L_{1;1}\trianglelefteq\cdots\trianglelefteq L_{\ell;1}=L_1\]
    and
    \[1=L_{0;2}\trianglelefteq L_{1;2}\trianglelefteq\cdots\trianglelefteq L_{\ell;2}=L_2\]
    are compatible.
    This completes the proof.
\end{proof}
We also define some operations on group sequences.
\begin{definition}
    Let 
    $S:=((S_i)_{0\le i\le\ell},(\pi_i)_{1\le i\le\ell})$
    be a group sequence.
    \begin{itemize}
        \item [\rm (i)] Let $G$ be a group and
        $f:G\rightarrow S_{\ell}$ be a homomorphism.
        Define a new group sequence $GfS$ as
        \[\xymatrix{
            G\ar[r]^f&S_{\ell}\ar[r]^{\pi_\ell}&\cdots\ar[r]^{\pi_1}&1.
        }\]
        This is called the \emph{concatenation} of $G$ and $S$ with respect to $f$.
        \item [\rm (ii)] Let $\operatorname{Contra}(S)$ be the group sequence defined as
        \[\xymatrix{
            S_\ell\ar[r]^{\pi_{\ell-1}\circ\pi_{\ell}}&S_{\ell-2}\ar[r]^{\pi_{\ell-2}}&\cdots\ar[r]^{\pi_1}&1.
        }\]
        We call $\operatorname{Contra}(S)$ the \emph{contraction} of $S$.
        For convenience, we define \[\operatorname{Contra}^2(S):=\operatorname{Contra}(\operatorname{Contra}(S)).\]
        \item [\rm (iii)] Let $T$ be a group sequence.
        Define $\operatorname{Contra}(S,T):=(\operatorname{Contra}(S),\operatorname{Contra}(T))$.
    \end{itemize}
\end{definition}
\subsection{Some technical conditions for compatibility}

\begin{definition}
    Let $G$ and $H$ be groups and let $\pi:G\rightarrow H$ be a homomorphism.
    Let $N\le H$.
    If $\pi^{-1}(M)\cong M\times\ker(\pi)$ for every $M\le N$,
    then we call $\pi$ \emph{trivially extendable} at $N$.
\end{definition}

\begin{lemma}\label{lem:goodproj}
    Let $\mathcal{X}$ be the inverse system \[\xymatrix@=0.5em{
        T_1\ar@{.}[dd]_n\ar[rd]^{\pi_1}\\
        &T_0\\
        T_n\ar[ru]_{\pi_n}
    }\] and
    let $G$ equipped with $p_i:G\rightarrow T_i$ for $i\in\{0,\ldots,n\}$ be the inverse limit of the inverse system $\mathcal{X}$.
    If $\mathcal{X}$ is surjective, then the projection $p_i$ is surjective and trivially extendable at $\ker(\pi_i)$ for every $i\in\{1,\ldots,n\}$.
\end{lemma}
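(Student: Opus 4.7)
The plan is to combine Theorem~\ref{thm:projection} with the explicit description of the inverse limit from Theorem~\ref{thm:limitexists}.

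First, I would observe that the underlying poset of $\mathcal{X}$ (with $T_0$ as the minimum and $T_1,\ldots,T_n$ as pairwise incomparable elements each covering $T_0$) is an in-forest poset, since for every index $j\in\{0,1,\ldots,n\}$, $\downarrow j$ equals either $\{0\}$ or $\{0,j\}$, both of which are chains. Hence Theorem~\ref{thm:projection}(iii) applies, and the surjectivity of $\mathcal{X}$ immediately yields that $p_i$ is surjective for every $i\in\{1,\ldots,n\}$.

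For trivial extendability at $\ker(\pi_i)$, I would use the concrete form of $G$ provided by Theorem~\ref{thm:limitexists}:
\[G=\Big\{(t_0,t_1,\ldots,t_n)\in T_0\times T_1\times\cdots\times T_n\;\Big|\; t_0=\pi_j(t_j) \text{ for every } j\in\{1,\ldots,n\}\Big\},\]
with $p_i$ being the projection onto slot $i$. Fix $i\in\{1,\ldots,n\}$ and let $M\le\ker(\pi_i)$. A tuple $(t_0,\ldots,t_n)\in G$ lies in $p_i^{-1}(M)$ precisely when $t_i\in M$; then $t_0=\pi_i(t_i)=1$, which forces $t_j\in\ker(\pi_j)$ for every $j\ne i$. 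Conversely, any choice of $t_i\in M$ together with arbitrary $t_j\in\ker(\pi_j)$ for $j\neq i$ (and $t_0=1$) gives a valid tuple in $p_i^{-1}(M)$.

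Since multiplication in $G$ is coordinatewise and the constraints on distinct coordinates are independent, the subgroup $M':=\{(1,\ldots,1,m,1,\ldots,1)\in G\mid m\in M\}$ (with $m$ in slot $i$) is isomorphic to $M$, and every element of $M'$ commutes with every element of $\ker(p_i)$. Together with $M'\cap\ker(p_i)=1$ and $p_i^{-1}(M)=M'\cdot\ker(p_i)$, this yields the internal direct product decomposition $p_i^{-1}(M)=M'\times\ker(p_i)\cong M\times\ker(p_i)$, which is exactly trivial extendability. There is no serious obstacle: the key step is recognising the poset as an in-forest so that Theorem~\ref{thm:projection} applies, after which the direct product structure is immediate from the tuple description of the inverse limit.
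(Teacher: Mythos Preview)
Your argument is correct and essentially the same as the paper's: both use the explicit tuple description of the inverse limit from Theorem~\ref{thm:limitexists}, identify $p_i^{-1}(M)$ concretely, and obtain the internal direct product $M'\times\ker(p_i)$. The only minor difference is that you appeal to Theorem~\ref{thm:projection}(iii) for surjectivity of $p_i$, whereas the paper states the claim and leaves surjectivity implicit in the explicit construction (given $t_i\in T_i$, set $t_0=\pi_i(t_i)$ and lift along the remaining surjective $\pi_j$); this is a cosmetic distinction rather than a substantive one.
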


\begin{proof}
    By symmetry, we only need to prove that $p_1$
    is surjective and trivially extendable at $\ker(\pi_1)$.
    Let $H:=\{(t_0,t_1,t_2,\ldots,t_n)\in T_0\times \cdots\times T_n\mid t_0=\pi_1(t_1)=\cdots=\pi_n(t_n)\}$
    and for every $i\in\{0,\ldots,n\}$, let $q_i:H\rightarrow T_i$ be the natural projection.
    By Theorem~\ref{thm:limitexists}, $H$ together with $q_i$ for $i\in\{0,\ldots,n\}$
    is an inverse limit of $\mathcal{X}$.
    Without loss of generality, we identify $G=H$ and $p_i=q_i$ for all $i\in\{0,\ldots,n\}$.
    Then 
    \[\ker(p_1)=\{(1,1,t_2,\ldots,t_n)\in T_0\times\cdots\times T_n\mid \forall\; i\in\{2,\ldots,n\}, t_i\in\ker(\pi_i)\}.\]
    Let $M$ be a subgroup of $\ker(\pi_1)$.
    Then \[p_1^{-1}(M)=\{(1,m,t_2,\ldots,t_n)\in T_0\times\cdots\times T_n\mid m\in M, \forall\; i\in\{2,\ldots,n\}, t_i\in\ker(\pi_i)\}.\]
    Let $M_1:=\{(1,m,1,\ldots,1)\in T_0\times\cdots\times T_n\mid m\in M\}$.
    Then it is clear that $p_1^{-1}(M)=\ker(p_1)M_1$.
    Also note that $M_1\cap \ker(p_1)=1$ and $[M_1,\ker(p_1)]=1$.
    Therefore, $p_1^{-1}(M)=M_1\times\ker(p_1)\cong M\times\ker(p_1)$. This completes the proof.
\end{proof}

\begin{definition}\label{def:goodwit}
    For every $\delta\in\{1,2\}$, let $L_\delta$ be a group and let 
    $N_\delta\le L_\delta$.
    If $(G,p_1,p_2)$ is a witness system for the compatibility 
    of $(L_1,L_2)$
    such that $p_\delta$ is trivially extendable at $N_\delta$ for every $\delta\in\{1,2\}$,
    then $(G,p_1,p_2)$ is called a \emph{good} witness system
    associated with $(N_1,N_2)$.
\end{definition}

\begin{example}\label{examp:goodwit}
  Let 
  \[\begin{aligned}
    L_1 &:= \mathbb{Z}_p^n 
          = \langle x_1\rangle \times \cdots \times \langle x_n\rangle,
    \\
    L_2 &:= \mathbb{Z}_{p^n}
          = \langle y\rangle,\\
      G &:= \mathbb{Z}_p \times \cdots \times \mathbb{Z}_{p^n}
         = \langle a_1\rangle \times \cdots \times \langle a_n\rangle.
  \end{aligned}  
  \]
  Let 
  \[
    p_1: G \longrightarrow L_1,
    \qquad
    a_i \mapsto x_i
    \quad(\forall\,i=1,\dots,n),
  \]
  and let
  \[
    p_2: G \longrightarrow L_2,
    \qquad
    a_i \mapsto
    \begin{cases}
      1, & i=1,\dots,n-1,\\
      y, & i=n.
    \end{cases}
  \]
  Then \((G,p_1,p_2)\) is a good witness system associated with $\langle x_1\rangle$ and $\langle y^{p^{n-1}}\rangle$.
  
  Let $L_1':=\langle x_2\rangle\times\cdots\times\langle x_n\rangle\cong\mathbb{Z}_p^{n-1}$, 
  let $L_2':=\langle y^{p}\rangle\cong\mathbb{Z}_{p^{n-1}}$,
  let $\pi_1:L_1\rightarrow L_1'$ be the natural projection
  and let $\pi_2:L_2\rightarrow L_2'$ be given by $y\mapsto y^p$.
  Then $\ker(\pi_1)=\langle x_1\rangle$ and $\ker(\pi_2)=\langle y^{p^{n-1}}\rangle$, and we have that
\((G,\pi_1\circ p_1,\pi_2\circ p_2)\) is a witness system for the compatibility of $(L_1',L_2')$.
  This is an important observation, as it shows that the good witness system
  \((G,p_1,p_2)\) can be used to construct witness systems for quotients of $L_1$ and $L_2$.
  In fact, this is the key idea of our method (see Lemma~\ref{lem:induction}). 
\end{example}

For the remainder of this section, we always assume $\delta\in\{1,2\}$. 
We also make the convention
that when we mention something indexed by $\delta$, it means we are discussing 
it for every $\delta\in\{1,2\}$. 
For example, in the above definition, 
the sentence `$p_\delta$ is trivially extendable at $N_\delta$ for every $\delta\in\{1,2\}$'
can be written as `$p_\delta$ is trivially extendable at $N_\delta$'.

\begin{lemma}\label{lem:induction}
    Let $S_\delta:=((S_{i;\delta})_{0\le i\le 3},(\pi_{i;\delta})_{1\le i\le 3})$ 
    be surjective group sequences such that
    \begin{itemize}
        \item [\rm (i)] $\ker(\pi_{3;1})\cong\ker(\pi_{3;2})$, and
        \item [\rm (ii)] $\pi_{3;\delta}$ is trivially extendable at $\ker(\pi_{2;\delta})$.
    \end{itemize}
    If there exists a good witness system $(G,p_1,p_2)$ for the 
    compatibility of $(S_{3;1},S_{3;2})$
    associated with $(\ker(\pi_{2;1}\circ\pi_{3;1}), 
    \ker(\pi_{2;2}\circ\pi_{3;2}))$, then
    $(G,\pi_{3;1}\circ p_1,\pi_{3;2}\circ p_2)$ is a good witness system for the compatibility of
    $(S_{2;1},S_{2;2})$ 
    associated with $(\ker(\pi_{2;1}),\ker(\pi_{2;2}))$.
\end{lemma}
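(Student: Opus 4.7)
The plan is to unwind the definition of \emph{good witness system} and verify the three required properties in turn: surjectivity of the composed projections, isomorphism of their kernels, and trivial extendability at $\ker(\pi_{2;\delta})$. Surjectivity is immediate since compositions of surjections are surjective, so the real work is in the last two items, and both reduce to carefully nesting preimages and applying the trivial extendability hypotheses.

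For the kernel isomorphism, I would write $\ker(\pi_{3;\delta}\circ p_\delta) = p_\delta^{-1}(\ker(\pi_{3;\delta}))$. Since $\ker(\pi_{3;\delta})\le\ker(\pi_{2;\delta}\circ\pi_{3;\delta})$ and $p_\delta$ is trivially extendable at $\ker(\pi_{2;\delta}\circ\pi_{3;\delta})$ (by the hypothesis that $(G,p_1,p_2)$ is good), it follows that
\[
\ker(\pi_{3;\delta}\circ p_\delta)\;\cong\;\ker(\pi_{3;\delta})\times\ker(p_\delta).
\]
Combining this with hypothesis (i), $\ker(\pi_{3;1})\cong\ker(\pi_{3;2})$, and the fact that $\ker(p_1)\cong\ker(p_2)$ (from $(G,p_1,p_2)$ being a witness system), gives $\ker(\pi_{3;1}\circ p_1)\cong\ker(\pi_{3;2}\circ p_2)$.

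For trivial extendability at $\ker(\pi_{2;\delta})$, let $M\le\ker(\pi_{2;\delta})$. Then $\pi_{3;\delta}^{-1}(M)\le\ker(\pi_{2;\delta}\circ\pi_{3;\delta})$, so the trivial extendability of $p_\delta$ at $\ker(\pi_{2;\delta}\circ\pi_{3;\delta})$ yields
\[
(\pi_{3;\delta}\circ p_\delta)^{-1}(M)\;=\;p_\delta^{-1}(\pi_{3;\delta}^{-1}(M))\;\cong\;\pi_{3;\delta}^{-1}(M)\times\ker(p_\delta).
\]
Applying hypothesis (ii), which says $\pi_{3;\delta}$ is trivially extendable at $\ker(\pi_{2;\delta})$, we further decompose $\pi_{3;\delta}^{-1}(M)\cong M\times\ker(\pi_{3;\delta})$. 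Chaining these two isomorphisms and using the kernel computation from the previous paragraph gives
\[
(\pi_{3;\delta}\circ p_\delta)^{-1}(M)\;\cong\;M\times\ker(\pi_{3;\delta})\times\ker(p_\delta)\;\cong\;M\times\ker(\pi_{3;\delta}\circ p_\delta),
\]
as required.

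The proof is essentially a bookkeeping exercise: all the real content is absorbed into the two trivial extendability assumptions, which act as a ``direct product splitting'' along two different maps, and the key observation is simply that $\ker(\pi_{3;\delta})$ and, more generally, any preimage of a subgroup of $\ker(\pi_{2;\delta})$ under $\pi_{3;\delta}$, lies inside the larger subgroup $\ker(\pi_{2;\delta}\circ\pi_{3;\delta})$ where $p_\delta$ behaves trivially. The only mild subtlety I anticipate is making sure the direct product decompositions combine correctly (i.e., that the two splittings are compatible in the sense that nesting them gives a single triple direct product), but this is automatic since both are obtained from the generic description of trivial extendability as an isomorphism of abstract groups, and no compatibility of embeddings is claimed or needed.
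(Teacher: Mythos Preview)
Your proposal is correct and follows essentially the same approach as the paper's proof: both argue surjectivity of $q_\delta:=\pi_{3;\delta}\circ p_\delta$ trivially, compute $\ker(q_\delta)\cong\ker(\pi_{3;\delta})\times\ker(p_\delta)$ using that $\ker(\pi_{3;\delta})\le\ker(\pi_{2;\delta}\circ\pi_{3;\delta})$, and then verify trivial extendability at $\ker(\pi_{2;\delta})$ by the same two-step nesting $q_\delta^{-1}(M)\cong\pi_{3;\delta}^{-1}(M)\times\ker(p_\delta)\cong M\times\ker(\pi_{3;\delta})\times\ker(p_\delta)\cong M\times\ker(q_\delta)$. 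The reasoning and its ordering match the paper almost line for line.
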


\begin{proof}
    Let $q_\delta:=\pi_{3;\delta}\circ p_\delta$. 
    It can be seen that $q_\delta$ is surjective.
    We first show that $(G,q_1,q_2)$ is a witness system for $(S_{2;1},S_{2;2})$.
    Notice that $p_\delta$ is surjective and $\ker(q_\delta)=p_\delta^{-1}(\ker(\pi_{3;\delta}))$.
    As $\ker(\pi_{3;\delta})\le\ker(\pi_{2;\delta}\circ\pi_{3;\delta})$ and
    $p_\delta$ is trivially extendable at $\ker(\pi_{2;\delta}\circ\pi_{3;\delta})$,
    we have 
    \[\ker(q_\delta)\cong \ker(\pi_{3;\delta})\times\ker(p_\delta).\]
    Hence $\ker(q_1)\cong\ker(q_2)$ by {\rm (i)}.
    Now let $M_\delta\le\ker(\pi_{2;\delta})$. 
    Since 
    $\pi^{-1}_{3;\delta}(M_\delta)\le\ker(\pi_{2;\delta}\circ\pi_{3;\delta})$,
    we have
    \[\begin{aligned}
        q_\delta^{-1}(M_\delta)&=p_\delta^{-1}(\pi^{-1}_{3;\delta}(M_\delta))\\
        &\cong \pi^{-1}_{3;\delta}(M_\delta)\times\ker(p_\delta)\\
        &\overset{\rm (ii)}{\cong} M_\delta\times\ker(\pi_{3;\delta})\times\ker(p_\delta)\\
        &\cong M_\delta\times\ker(q_\delta).
    \end{aligned}\]
    This completes the proof.
\end{proof}

Now we rephrases Example~\ref{examp:goodwit} in terms of Lemma~\ref{lem:induction}.
\begin{example}
    Using the notation in Example~\ref{examp:goodwit},
    let $S_{3;\delta}:=L_\delta$, 
    let $S_{2;\delta}:=L'_\delta$ and let $S_{1;\delta}=S_{2;\delta}$.
    Let $\pi_{3;\delta}=\pi_\delta$ and let $\pi_{2;\delta}={\rm id}_{L'_\delta}$.
    Since $\ker(\pi_{2;\delta})=1$,
    we have that $(G,p_1,p_2)$ is a good witness system for the compatibility of $(S_{3;1},S_{3;2})$
    associated with $(\ker(\pi_{2;1}\circ\pi_{3;1}), \ker(\pi_{2;2}\circ\pi_{3;2}))$
    and $\pi_{3;\delta}$ is trivially extendable at $\ker(\pi_{2;\delta})$.
    By Lemma~\ref{lem:induction}, $(G,\pi_{3;1}\circ p_1,\pi_{3;2}\circ p_2)$ is a witness system for the compatibility of $(S_{2;1},S_{2;2})$.
\end{example}

Lemma~\ref{lem:induction} motivates us to define the following concept.
\begin{definition}
    Let  $S_\delta:=((S_{i;\delta})_{0\le i\le \ell},(\pi_{i;\delta})_{1\le i\le\ell})$
    be a surjective group sequence.
    If $(G,p_1,p_2)$ is a good witness system for the compatibility of $(S_{\ell;1},S_{\ell;2})$
    associated with $(\ker(\pi_{\ell;1}),\ker(\pi_{\ell;2}))$,
    then $(G,p_1,p_2)$ is called a \emph{good witness system} associated with $(S_1,S_2)$.
\end{definition}

\begin{lemma}\label{lem:length2}
    If $S_1$ and $S_2$ are compatible surjective group sequences of length $2$,
    then
    there exists a good witness system associated with $(S_1,S_2)$.
\end{lemma}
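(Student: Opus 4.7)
The plan is to realise $G$ as the pullback (i.e.\ inverse limit of a two-arrow diagram) of $\pi_{2;1}$ and $\pi_{2;2}$ after identifying $S_{1;1}$ with $S_{1;2}$, and then to read off the required properties from Lemma~\ref{lem:goodproj} together with the compatibility hypothesis.

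First, unravel the definitions. Since $S_{0;\delta}=1$, the map $\pi_{1;\delta}\colon S_{1;\delta}\to S_{0;\delta}$ is trivial, so $\ker(\pi_{1;\delta})=S_{1;\delta}$. Compatibility of $S_1$ and $S_2$ therefore yields an isomorphism $\phi\colon S_{1;2}\to S_{1;1}$, and also the isomorphism $\ker(\pi_{2;1})\cong\ker(\pi_{2;2})$ which will be used at the very end.

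Next, consider the in-forest poset $I=\{a,b,c\}$ with $a<c$ and $b<c$, and define the inverse system $\mathcal{X}$ with $X_a=S_{2;1}$, $X_b=S_{2;2}$, $X_c=S_{1;1}$, transition maps $\pi_{2;1}\colon X_a\to X_c$ and $\phi\circ\pi_{2;2}\colon X_b\to X_c$. Since $\pi_{2;1}$ and $\pi_{2;2}$ are surjective and $\phi$ is an isomorphism, $\mathcal{X}$ is a surjective inverse system. Let $(G,(p_a,p_b,p_c))=\varprojlim(\mathcal{X})$; by Theorem~\ref{thm:projection}(iii), the projections $p_a\colon G\to S_{2;1}$ and $p_b\colon G\to S_{2;2}$ are surjective, and by Lemma~\ref{lem:goodproj}, $p_a$ is trivially extendable at $\ker(\pi_{2;1})$ and $p_b$ is trivially extendable at $\ker(\phi\circ\pi_{2;2})=\ker(\pi_{2;2})$.

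It remains to check $\ker(p_a)\cong\ker(p_b)$. Using the explicit description in Theorem~\ref{thm:limitexists},
\[
G=\{(x_1,x_2,y)\in S_{2;1}\times S_{2;2}\times S_{1;1}\mid \pi_{2;1}(x_1)=y=\phi(\pi_{2;2}(x_2))\},
\]
so $\ker(p_a)=\{(1,x_2,1)\mid x_2\in\ker(\pi_{2;2})\}\cong\ker(\pi_{2;2})$ and similarly $\ker(p_b)\cong\ker(\pi_{2;1})$. Compatibility then gives $\ker(p_a)\cong\ker(p_b)$. Setting $p_1:=p_a$ and $p_2:=p_b$, the triple $(G,p_1,p_2)$ is a good witness system for $(S_{2;1},S_{2;2})$ associated with $(\ker(\pi_{2;1}),\ker(\pi_{2;2}))$, which is exactly a good witness system associated with $(S_1,S_2)$.

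There is no real obstacle here beyond bookkeeping: the pullback construction, already foreshadowed in Section~\ref{subsec:l2}, does all the work, and the only nontrivial ingredients are the surjectivity of limit projections over in-forest posets (Theorem~\ref{thm:projection}(iii)) and the trivial extendability from the star-shaped diagram (Lemma~\ref{lem:goodproj}).
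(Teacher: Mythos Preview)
Your argument is essentially the paper's: build the pullback of $\pi_{2;1}$ and (an isomorph of) $\pi_{2;2}$ and invoke Lemma~\ref{lem:goodproj}. One notational slip: with the paper's conventions the transition map $f_{ij}$ goes from $X_j$ to $X_i$ when $i\le j$, so your poset should have $c<a$ and $c<b$ (equivalently, $c$ minimal); with $a<c$ and $b<c$ the set $\downarrow c=\{a,b,c\}$ is not a chain and $I$ fails to be an in-forest poset, so Theorem~\ref{thm:projection}(iii) would not apply as stated. Once this is corrected the proof goes through, and your direct computation of $\ker(p_a)$ and $\ker(p_b)$ from the explicit description of $G$ is a slightly more hands-on alternative to the paper's route via Theorem~\ref{thm:projection}(ii) and Theorem~\ref{thm:limpullback}(iii).
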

\begin{proof}
    Since $S_1$ and $S_2$ are compatible, there exists
    isomorphism $\sigma:\ker(\pi_{1;1})\rightarrow\ker(\pi_{1;2})$.
    Since $S_{0;\delta}=1$, we have $\ker(\pi_{1;\delta})=S_{1;\delta}$.
    Let $\mathcal{X}$ be the inverse system given by
    \[\xymatrix@=0.5em{
        S_{2;1}\ar[rd]^{\sigma\circ\pi_{2;1}}&\\
        &S_{1;2}\\
        S_{2;2}\ar[ru]_{\pi_{2;2}}
    }\]
    and let $G$ equipped with $p_1:G\rightarrow S_{2;1}$, $p_2:G\rightarrow S_{2;2}$ and
    $p:G\rightarrow S_{2;1}$ be the inverse limit of $\mathcal{X}$.
    Let $\mathcal{Y}_1$ be the inverse system given by
    \[\xymatrix@=0.5em{
        S_{2;1}\ar[rd]^{\sigma\circ\pi_{2;1}}&\\
        &S_{1;2}\\
        S_{1;2}\ar@{=}[ru]
    }\]
    By Theorem \ref{thm:projection} {\rm (ii)},
    $p_1$ is the inverse limit of the morphism from $\mathcal{X}$ to $\mathcal{Y}_1$
    defined as below:
    \[\xymatrix@=0.5em{
        S_{2;1}\ar[rd]^{\sigma\circ\pi_{2;1}}\ar@{=}[rrrr]&&&&S_{2;1}\ar[rd]^{\sigma\circ\pi_{2;1}}\\
        &S_{1;2}\ar@{=}[rrrr]&&&&S_{1;2}\\
        S_{2;2}\ar[ru]_{\pi_{2;2}}\ar[rrrr]_{\pi_{2;2}}&&&&S_{1;2}\ar@{=}[ru]
    }\]
    Then by Theorem \ref{thm:limpullback} {\rm (iii)},
    $\ker(p_1)\cong \ker(\pi_{2;2})$.
    Similarly, we deduce that
    $\ker(p_2)\cong \ker(\pi_{2;1})$.
    Therefore,
    $\ker(p_1)\cong\ker(p_2)$.
    It follows that $(G,p_1,p_2)$ is a witness system
    for the compatibility of $(S_{2;1},S_{2;2})$.
    By Lemma \ref{lem:goodproj}, 
    $p_1$ is surjective and trivially extendable
    at $\ker(\sigma\circ\pi_{2;1})=\ker(\pi_{2;1})$.
    We also have that $p_2$ is surjective and trivially
    extendable at $\ker(\pi_{2;2})$.
    Therefore, $(G,p_1,p_2)$ is a good witness system
    for the compatibility of $(S_{2;1},S_{2;2})$
    associated with $(\ker(\pi_{2;1}),\ker(\pi_{2;2}))$.
    This concludes the proof.
\end{proof}

By applying Lemma \ref{lem:induction}, we obtain the following lemma.

\begin{lemma}\label{lem:induction1}
    Let $\ell\ge 3$ and let $S_\delta:=((S_{i;\delta})_{0\le i\le \ell},(\pi_{i;\delta})_{1\le i\le\ell})$
    be surjective group sequences of length $\ell$.    
    Let $\mathcal{C}_{\ell-1}$ be a collection of pairs of surjective group sequences
    of length $\ell-1$.
    If
    \begin{itemize}
        \item [\rm (i)] for every $(T_1,T_2)\in \mathcal{C}_{\ell-1}$, there is a good witness system associated with
        $(T_1,T_2)$, and
        \item [\rm (ii)] there exists a group $S_{\ell+1;\delta}$ and a homomorphism $\pi_{\ell+1;\delta}:S_{\ell+1;\delta}\rightarrow S_{\ell;\delta}$ 
        such that
        \begin{itemize}
            \item [\rm (a)] $\ker(\pi_{\ell+1;1})\cong\ker(\pi_{\ell+1;2})$,
            \item [\rm (b)] $\pi_{\ell+1;\delta}$ is trivially extendable at $\ker(\pi_{\ell;\delta})$, and
            \item [\rm (c)] $\operatorname{Contra}^2(S_{\ell+1;1}\pi_{\ell+1;1}S_{1},S_{\ell+1;2}\pi_{\ell+1;2}S_{2})\in\mathcal{C}_{\ell-1}$,
        \end{itemize}
    \end{itemize}
    then there exists a good witness system associated with $(S_1,S_2)$.

\end{lemma}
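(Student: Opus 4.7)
The plan is a two-step reduction: first extract a good witness system from hypothesis (i) applied to the contracted sequences supplied by (ii)(c), and then feed that system into Lemma~\ref{lem:induction} to descend by one level to $(S_1,S_2)$. First, I would form, for each $\delta\in\{1,2\}$, the length-$(\ell+1)$ concatenation $T_\delta:=S_{\ell+1;\delta}\pi_{\ell+1;\delta}S_\delta$ that stacks the new layer on top of $S_\delta$. Unwinding the definition of $\operatorname{Contra}$, this operation preserves the top group $S_{\ell+1;\delta}$ while collapsing the two layers immediately below it; applied twice, the top map of $\operatorname{Contra}^2(T_\delta)$ becomes the three-fold composition $\pi_{\ell-1;\delta}\circ\pi_{\ell;\delta}\circ\pi_{\ell+1;\delta}$.

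By (ii)(c), $\operatorname{Contra}^2(T_1,T_2)\in\mathcal{C}_{\ell-1}$, so (i) yields a good witness system $(G,q_1,q_2)$ associated with this pair, which by definition is a good witness system for $(S_{\ell+1;1},S_{\ell+1;2})$ associated with the kernels of those three-fold compositions. Since $\ker(\pi_{\ell;\delta}\circ\pi_{\ell+1;\delta})\le\ker(\pi_{\ell-1;\delta}\circ\pi_{\ell;\delta}\circ\pi_{\ell+1;\delta})$ and trivial extendability at a subgroup automatically passes to any smaller subgroup, the same triple $(G,q_1,q_2)$ is a good witness system for $(S_{\ell+1;1},S_{\ell+1;2})$ associated with $(\ker(\pi_{\ell;1}\circ\pi_{\ell+1;1}),\ker(\pi_{\ell;2}\circ\pi_{\ell+1;2}))$ as well.

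Finally, I would apply Lemma~\ref{lem:induction} to the length-$3$ surjective group sequences whose groups are $1$, $S_{\ell-1;\delta}$, $S_{\ell;\delta}$, $S_{\ell+1;\delta}$ and whose maps are (from bottom up) the trivial map, $\pi_{\ell;\delta}$, and $\pi_{\ell+1;\delta}$. Its hypotheses hold by (ii)(a) and (ii)(b), and the required good witness system on top is exactly the one just produced. The lemma therefore outputs $(G,\pi_{\ell+1;1}\circ q_1,\pi_{\ell+1;2}\circ q_2)$ as a good witness system for $(S_{\ell;1},S_{\ell;2})$ associated with $(\ker(\pi_{\ell;1}),\ker(\pi_{\ell;2}))$, which is by definition a good witness system associated with $(S_1,S_2)$. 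The only real bookkeeping is tracing indices through $\operatorname{Contra}^2$ to confirm the three-fold composition and to check that the subgroup with which the witness system is associated can be shrunk; I do not anticipate a genuine obstacle, since the statement is essentially a controlled splicing of hypothesis (i) with Lemma~\ref{lem:induction}.
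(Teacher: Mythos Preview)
Your proposal is correct and follows essentially the same approach as the paper's proof: extract a good witness system for $(S_{\ell+1;1},S_{\ell+1;2})$ from hypotheses (i) and (ii)(c), shrink the associated subgroup from $\ker(\pi_{\ell-1;\delta}\circ\pi_{\ell;\delta}\circ\pi_{\ell+1;\delta})$ to $\ker(\pi_{\ell;\delta}\circ\pi_{\ell+1;\delta})$, and then invoke Lemma~\ref{lem:induction} on the length-$3$ truncation to descend to $(S_{\ell;1},S_{\ell;2})$. The bookkeeping you flag (tracing indices through $\operatorname{Contra}^2$ and monotonicity of trivial extendability) is exactly what the paper does as well.
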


\begin{proof}
    By {\rm (i)} and {\rm (ii.c)}, there exists a good witness system
    $(G,p_1:G\rightarrow S_{\ell+1;1},p_2:G\rightarrow S_{\ell+1;2})$ 
    associated with 
    $(\ker(\pi_{\ell-1;1}\circ\pi_{\ell;1}\circ\pi_{\ell+1;1}),
    \ker(\pi_{\ell-1;2}\circ\pi_{\ell;2}\circ\pi_{\ell+1;2}))$.
    This implies that 
    $(G,p_1,p_2)$ is good associated with
    $(\ker(\pi_{\ell;1}\circ\pi_{\ell+1;1}),\ker(\pi_{\ell;2}\circ\pi_{\ell+1;2}))$.

    By {\rm (ii.a)} and {\rm (ii.b)}, we can apply Lemma \ref{lem:induction} to
    \[\xymatrix{T_1:S_{\ell+1;1}\ar[r]^{\pi_{\ell+1;1}}&S_{\ell;1}\ar[r]^{\pi_{\ell;1}}&S_{\ell-1;1}\ar[r]&1}\]
    and
    \[\xymatrix{T_2:S_{\ell+1;2}\ar[r]^{\pi_{\ell+1;2}}&S_{\ell;2}\ar[r]^{\pi_{\ell;2}}&S_{\ell-1;2}\ar[r]&1}.\]
    Hence we deduce that $(G,\pi_{\ell+1;1}\circ p_1,\pi_{\ell+1;2}\circ p_2)$ 
    is a good witness system for the compatibility of $(S_{\ell;1},S_{\ell;2})$
    associated with $(\ker(\pi_{\ell;1}),\ker(\pi_{\ell;2}))$, which also implies that
    this is a good witness system associated with $(S_1,S_2)$.
\end{proof}

\begin{definition}
    Let $S:=((S_i)_{0\le i\le \ell},(\pi_i)_{1\le i\le\ell})$ and $T:=((T_i)_{0\le i\le \ell},(\rho_i)_{1\le i\le\ell})$ 
    be surjective group sequences.
    \begin{itemize}
        \item [\rm (i)] If  $S_i=T_i$ and $\pi_i=\rho_i$ for every $1\le i\le \ell-1$, 
        then we say that $S$ and $T$ are
        \emph{almost equal} and denote this by $S\approx T$.
        \item [\rm (ii)] Let $S\approx T$. 
        Let \[\xymatrix@=0.5em{
            &S_\ell\ar[rd]^{\pi_\ell}&\\
            S'_{\ell}:=\varprojlim&&S_{\ell-1}\\
            &T_\ell\ar[ru]_{\rho_\ell}
        }\]
        and let $\pi'_\ell:S'_\ell\rightarrow S_{\ell-1}$ be the inverse limit projection from
        $S'_\ell$ to $S_{\ell-1}$.
        We define a new group sequence
        \[\xymatrix{
            S\# T:S'_\ell\ar[r]^{\pi'_\ell}&S_{\ell-1}\ar[r]^{\pi_{\ell-1}}&\cdots\ar[r]^{\pi_{2}}&S_1\ar[r]^{\pi_1}&1}
        \]
        \item [\rm (iii)] Let $\mathcal{C}$ be a collection of pairs of group sequences of length $\ell$.
        If for every $(S_1,S_2),(T_1,T_2)\in\mathcal{C}$ such that $S_\delta\approx T_\delta$,
        we have \[(S_1\# T_1,S_2\# T_2)\in\mathcal{C},\] then we say that $\mathcal{C}$
        is \emph{closed} under $\#$.
        
    \end{itemize}
    
\end{definition}

\begin{definition}
    Let $S_\delta:=((S_{i;\delta})_{0\le i\le\ell},(\pi_{i;\delta})_{1\le i\le\ell})$
    be surjective group sequences and let $(S_1,S_2)$ be compatible.
    \begin{itemize}
        \item [\rm (i)] If $\operatorname{Contra}(S_1,S_2)$ is compatible,
        we call $(S_1,S_2)$ \emph{reducible}.
        \item [\rm (ii)] Let $\mathcal{C}_\ell$ be a collection of pairs of compatible surjective group sequences of length $\ell$
        and let $\mathcal{C}_{\ell-1}$ be a collection of pairs of compatible surjective group sequences of length $\ell-1$.
        If for every reducible $(S_1,S_2)\in\mathcal{C}_\ell$, we have $\operatorname{Contra}(S_1,S_2)\in\mathcal{C}_{\ell-1}$,
        then we say that $\mathcal{C}_\ell$ \emph{can be reduced to} $\mathcal{C}_{\ell-1}$ and denote this by $\mathcal{C}_{\ell}\searrow\mathcal{C}_{\ell-1}$.
    \end{itemize}
    
\end{definition}

\begin{lemma}
    \label{lem:induction3}
    Let $\ell\ge 3$, let $\mathcal{C}_\ell$ be a collection of pairs of compatible group sequences of length $\ell$ which is closed under $\#$,
    and let $\mathcal{C}_{\ell-1}$ be a collection of pairs of compatible surjective group sequences
    of length $\ell-1$ such that $\mathcal{C}_\ell\searrow\mathcal{C}_{\ell-1}$.
    Recall that $\delta$ runs through $\{1,2\}$. 
    Let $\bar{\delta}:=3-\delta\in\{1,2\}$.
    Let $(S_1,S_2)\in\mathcal{C}_\ell$ with $S_\delta:=((S_{i;\delta})_{0\le i\le \ell},(\pi_{i;\delta})_{1\le i\le\ell})$.
    If
    \begin{itemize}
        \item [\rm (i)] for every $(T_1,T_2)\in \mathcal{C}_{\ell-1}$, there is a good witness system associated with
        $(T_1,T_2)$;
        \item [\rm (ii)] there exist a group $G_\delta$ and a surjective homomorphism $\rho_{\delta}:G_\delta\rightarrow S_{\ell;\delta}$ 
        such that $\rho_{\delta}$ is trivially extendable at $\ker(\pi_{\ell;\delta})$;
        \item [\rm (iii)] there exist a group $H_\delta$, a homomorphism $\phi_{\delta}:H_\delta\rightarrow S_{\ell-1;\delta}$ 
        and isomorphisms $\sigma_{\ell-1}:\ker(\pi_{\ell-1;1})\rightarrow \ker(\pi_{\ell-1;2}$), 
        $\eta_\delta:\ker(\pi_{\ell-1;\delta}\circ\phi_\delta)\rightarrow \ker(\pi_{\ell-1;\bar{\delta}}\circ\pi_{\ell;\bar{\delta}}\circ \rho_{\bar{\delta}})$
        such that the following diagrams commute 
        \[\xymatrix{
            \ker(\pi_{\ell-1;1}\circ\phi_1)\ar[rr]^{\phi_1}\ar[d]^{\eta_1}&&\ker(\pi_{\ell-1;1})\ar[d]^{\sigma_{\ell-1}}\\
            \ker(\pi_{\ell-1;2}\circ\pi_{\ell;2}\circ\rho_2)\ar[rr]^{\pi_{\ell;2}\circ\rho_2}&&\ker(\pi_{\ell-1;2})
        }\]
        \[\xymatrix{
            \ker(\pi_{\ell-1;2}\circ\phi_2)\ar[rr]^{\phi_2}\ar[d]^{\eta_2}&&\ker(\pi_{\ell-1;2})\ar[d]^{\sigma^{-1}_{\ell-1}}\\
            \ker(\pi_{\ell-1;1}\circ\pi_{\ell;1}\circ\rho_1)\ar[rr]^{\pi_{\ell;1}\circ\rho_1}&&\ker(\pi_{\ell-1;1})
        }\]
        \item [\rm (iv)] $(S_1',S_2'), (S_1'',S_2'')\in\mathcal{C}_\ell$ where
        \[\xymatrix{
        S_1':G_1\ar[r]^{\pi_{\ell;1}\circ\rho_1}&S_{\ell-1;1}\ar[r]^{\pi_{\ell-1;1}}&\cdots\ar[r]^{\pi_{1;1}}&1
    }\]
    \[\xymatrix{S_1'':H_1\ar[r]^{\phi_1}&S_{\ell-1;1}\ar[r]^{\pi_{\ell-1;1}}&\cdots\ar[r]^{\pi_{1;1}}&1}\]
    \[\xymatrix{S_2':H_2\ar[r]^{\phi_2}&S_{\ell-1;2}\ar[r]^{\pi_{\ell-1;2}}&\cdots\ar[r]^{\pi_{1;2}}&1}\]
    \[\xymatrix{
        S_2'':G_2\ar[r]^{\pi_{\ell;2}\circ\rho_2}&S_{\ell-1;2}\ar[r]^{\pi_{\ell-1;2}}&\cdots\ar[r]^{\pi_{1;2}}&1
    }\]
    \end{itemize}
    then there exists a good witness system associated with $(S_1,S_2)$.
\end{lemma}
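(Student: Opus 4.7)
The plan is to apply Lemma~\ref{lem:induction1} to $(S_1,S_2)$ with a pair of extensions $(S_{\ell+1;\delta},\pi_{\ell+1;\delta})$ built from the hypotheses via the $\#$ operation.

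First I would set $T_\delta := S_\delta' \# S_\delta''$. Since $S_\delta' \approx S_\delta''$ and $\mathcal{C}_\ell$ is closed under $\#$, hypothesis (iv) yields $(T_1,T_2)\in\mathcal{C}_\ell$. Let $W_\delta$ denote the top group of $T_\delta$ and $\pi'_{\ell;\delta}\colon W_\delta\to S_{\ell-1;\delta}$ its first transition map, so that $W_1=G_1\times_{S_{\ell-1;1}}H_1$ (along $\pi_{\ell;1}\circ\rho_1$ and $\phi_1$) and $W_2=H_2\times_{S_{\ell-1;2}}G_2$ (along $\phi_2$ and $\pi_{\ell;2}\circ\rho_2$). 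I then take $S_{\ell+1;\delta}:=W_\delta$ and $\pi_{\ell+1;\delta}:=\rho_\delta\circ\pi_{G_\delta}$, where $\pi_{G_\delta}\colon W_\delta\to G_\delta$ is the pullback projection. A direct pullback-kernel computation gives $\ker(\pi_{\ell+1;\delta})\cong\ker(\rho_\delta)\times\ker(\phi_\delta)$, and the pullback identity gives $\pi_{\ell;\delta}\circ\pi_{\ell+1;\delta}=\pi'_{\ell;\delta}$, so $\operatorname{Contra}^2(W_\delta\,\pi_{\ell+1;\delta}\,S_\delta)=\operatorname{Contra}(T_\delta)$.

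Conditions (ii.a) and (ii.b) of Lemma~\ref{lem:induction1} are then routine. For (ii.a), combining $\ker(\pi_{\ell;1}\circ\rho_1)\cong\ker(\phi_2)$ and $\ker(\phi_1)\cong\ker(\pi_{\ell;2}\circ\rho_2)$ (from $(S_1',S_2'),(S_1'',S_2'')\in\mathcal{C}_\ell$) with the splitting $\ker(\pi_{\ell;\delta}\circ\rho_\delta)\cong\ker(\pi_{\ell;\delta})\times\ker(\rho_\delta)$ (from (ii)) and $\ker(\pi_{\ell;1})\cong\ker(\pi_{\ell;2})$ (from $(S_1,S_2)\in\mathcal{C}_\ell$) yields $\ker(\pi_{\ell+1;1})\cong\ker(\pi_{\ell+1;2})$. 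For (ii.b), if $M\le\ker(\pi_{\ell;\delta})$, one checks $\pi_{\ell+1;\delta}^{-1}(M)=\rho_\delta^{-1}(M)\times\ker(\phi_\delta)\cong M\times\ker(\rho_\delta)\times\ker(\phi_\delta)=M\times\ker(\pi_{\ell+1;\delta})$, again invoking (ii).

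The main obstacle is condition (ii.c), which by the identity above amounts to showing $\operatorname{Contra}(T_1,T_2)\in\mathcal{C}_{\ell-1}$. Because $\mathcal{C}_\ell\searrow\mathcal{C}_{\ell-1}$, this reduces to showing $(T_1,T_2)$ is reducible, i.e.\ that $\ker(\pi_{\ell-1;1}\circ\pi'_{\ell;1})\cong\ker(\pi_{\ell-1;2}\circ\pi'_{\ell;2})$ (compatibility at lower levels being inherited from $(T_1,T_2)\in\mathcal{C}_\ell$). Each of these top kernels admits a natural pullback description: with $A_\delta:=\ker(\pi_{\ell-1;\delta}\circ\pi_{\ell;\delta}\circ\rho_\delta)$ and $B_\delta:=\ker(\pi_{\ell-1;\delta}\circ\phi_\delta)$, one gets $\ker(\pi_{\ell-1;1}\circ\pi'_{\ell;1})\cong A_1\times_{\ker(\pi_{\ell-1;1})}B_1$ and $\ker(\pi_{\ell-1;2}\circ\pi'_{\ell;2})\cong B_2\times_{\ker(\pi_{\ell-1;2})}A_2$. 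Transporting the second along $\sigma_{\ell-1}^{-1}$ and then substituting via $\eta_1\colon B_1\to A_2$ and $\eta_2\colon B_2\to A_1$, the two commutative squares of (iii) guarantee that both expressions become identified with $A_1\times_{\ker(\pi_{\ell-1;1})}A_2$ (taken along $\pi_{\ell;1}\circ\rho_1$ and $\sigma_{\ell-1}^{-1}\circ\pi_{\ell;2}\circ\rho_2$). This matching of two pullbacks through the three isomorphisms of (iii) is the technical heart of the argument; once it is established, Lemma~\ref{lem:induction1} delivers the desired good witness system associated with $(S_1,S_2)$.
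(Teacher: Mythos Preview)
Your proposal is correct and follows essentially the same approach as the paper: build $S_{\ell+1;\delta}$ as the fibre product $G_\delta\times_{S_{\ell-1;\delta}}H_\delta$, verify conditions (ii.a)--(ii.c) of Lemma~\ref{lem:induction1}, and use the commutative squares of (iii) together with closure under $\#$ and $\mathcal{C}_\ell\searrow\mathcal{C}_{\ell-1}$ to establish reducibility. The only cosmetic differences are that the paper phrases everything in inverse-limit language (applying Theorem~\ref{thm:limpullback} and Lemma~\ref{lem:limiso} to morphisms of inverse systems) rather than explicit pullback computations, and derives $\ker(\phi_1)\cong\ker(\pi_{\ell;2}\circ\rho_2)$ from the isomorphism $\eta_1$ in (iii) rather than from the compatibility hypothesis in (iv) as you do; both routes are valid.
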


\begin{proof}
    Let $\mathcal{X}_\delta$ be the inverse system
    \[\xymatrix@=0.5em{
        G_\delta\ar[rd]^{\pi_{\ell;\delta}\circ\rho_\delta}&\\
        &S_{\ell-1;\delta}\\
        H_\delta\ar[ru]_{\phi_\delta}
    }\] 
    and let $S_{\ell+1;\delta}:=\varprojlim\mathcal{X}_\delta$.
    Let $\mathcal{Y}_\delta$ be the inverse system
    \[\xymatrix@=0.5em{
        S_{\ell;\delta}\ar[rd]^{\pi_{\ell;\delta}}\\
        &S_{\ell-1;\delta}\\
        S_{\ell-1;\delta}\ar@{=}[ru]
    }\]
    Notice that $S_{\ell;\delta}=\varprojlim(\mathcal{Y}_\delta)$.    
    We let $\pi_{\ell+1;\delta}:S_{\ell+1;\delta}\rightarrow S_{\ell;\delta}$ be the 
    inverse limit of the morphism $X_\delta\rightarrow Y_\delta$ defined as follows
    \[\xymatrix@=0.5em{
        G_\delta\ar[rd]_{\pi_{\ell;\delta}\circ\rho_\delta}\ar[rr]^{\rho_\delta}&&S_{\ell;\delta}\ar[rd]^{\pi_{\ell;\delta}}\\
        &S_{\ell-1;\delta}\ar@{=}[rr]&&S_{\ell-1;\delta}\\
        H_\delta\ar[ru]^{\phi_\delta}\ar[rr]^{\phi_\delta}&&S_{\ell-1;\delta}\ar@{=}[ru]
    }\]
    
    Now we compute $\ker(\pi_{\ell+1;\delta})$.
    By Theorem \ref{thm:limpullback} {\rm (iii)}, $\ker(\pi_{\ell+1;\delta})$
    is the inverse limit of
    \[\xymatrix@=0.5em{
        \ker(\rho_\delta)\ar[rd]\\
        &1\\
        \ker(\phi_\delta)\ar[ru]
    }\]
    which implies that $\ker(\pi_{\ell+1;\delta})\cong\ker(\rho_\delta)\times\ker(\phi_\delta)$.
    By {\rm (iii)}, $\ker(\phi_1)\cong \ker(\pi_{\ell;2}\circ\rho_2)$
    and $\ker(\phi_2)\cong \ker(\pi_{\ell;1}\circ\rho_1)$.
    By {\rm (ii)}, 
    \[\ker(\pi_{\ell;\delta}\circ\rho_\delta)=\rho_\delta^{-1}(\ker(\pi_{\ell;\delta}))\cong \ker(\pi_{\ell;\delta})\times\ker(\rho_\delta).\]
    Also notice that $\ker(\pi_{\ell;1})\cong\ker(\pi_{\ell;2})$ by the
    compatibility of $S_1$ and $S_2$. Thus, we have 
    \[\begin{aligned}
        \ker(\pi_{\ell+1;1})&\cong \ker(\rho_1)\times\ker(\phi_1)\\
        &\cong \ker(\rho_1)\times\ker(\pi_{\ell;2}\circ\rho_1)\\
        &\cong \ker(\rho_1)\times\ker(\pi_{\ell;2})\times \ker(\rho_2)\\
        &\cong \ker(\rho_1)\times\ker(\pi_{\ell;1})\times\ker(\rho_2)\\
        &\cong \ker(\pi_{\ell;1}\circ\rho_1)\times\ker(\rho_2)\\
        &\cong \ker(\phi_2)\times \ker(\rho_2)\\
        &\cong \ker(\pi_{\ell+1;2})
    \end{aligned}
    \]
    This implies that $S_{\ell+1;\delta}$ and $\pi_{\ell+1;\delta}$
    satisfy Lemma \ref{lem:induction1} {\rm (ii.a)}.
    Let $M_\delta\le \ker(\pi_{\ell;\delta})$.
    We now compute $\pi_{\ell+1;\delta}^{-1}(M_\delta)$.
    Note that $M_\delta$ is the inverse limit
    of
    \[\xymatrix@=0.5em{
        M_\delta\ar[rd]\\
        &1\\
        1\ar[ru]
    }\]
    which is a subsystem of $\mathcal{Y}_\delta$.
    Then by Theorem \ref{thm:limpullback} {\rm (ii)}, $\pi^{-1}_{\ell+1;\delta}(M_\delta)$
    is the inverse limit of 
    \[\xymatrix@=0.5em{
        \rho_\delta^{-1}(M_\delta)\ar[rd]\\
        &1\\
        \ker(\phi_\delta)\ar[ru]
    }\]
    which implies that
    $\pi^{-1}_{\ell+1;\delta}(M_\delta)\cong\rho^{-1}_{\delta}(M_\delta)\times\ker(\phi_\delta)$.
    By {\rm (ii)}, 
    \[\pi^{-1}_{\ell+1;\delta}(M_\delta)\cong M_\delta\times \ker(\rho_\delta)\times\ker(\phi_\delta)\cong M_\delta\times\ker(\pi_{\ell+1;\delta}).\]
    We therefore derive that $S_{\ell+1;\delta}$ and $\pi_{\ell+1;\delta}$
    satisfy Lemma \ref{lem:induction1} {\rm (ii.b)}.
    
    Now we turn to condition {\rm (ii.c)} in Lemma \ref{lem:induction1}. 
    Let 
    \[\xymatrix{
        T_1: S_{\ell+1;1}\ar[r]^{\pi_{\ell;1}\circ\pi_{\ell+1;1}}&S_{\ell-1;1}\ar[r]^{\pi_{\ell-1;1}}&\cdots\ar[r]&1
    }\]
    and let
    \[\xymatrix{
        T_2: S_{\ell+1;2}\ar[r]^{\pi_{\ell;2}\circ\pi_{\ell+1;2}}&S_{\ell-1;2}\ar[r]^{\pi_{\ell-1;2}}&\cdots\ar[r]&1.
    }\]
    Then $(T_1,T_2)=\operatorname{Contra}(S_{\ell+1;1}\pi_{\ell+1;1}S_1,S_{\ell+1;2}\pi_{\ell+1;2}S_2)$.
    By our previous discussion, 
    \[
        \ker(\pi_{\ell;1}\circ\pi_{\ell+1;1})\cong\ker(\pi_{\ell;1})\times\ker(\pi_{\ell+1;1})\cong\ker(\pi_{\ell;2})\times\ker(\pi_{\ell+1;2})\cong\ker(\pi_{\ell;2}\circ\pi_{\ell+1;2})
    \]
    which implies that $T_1$ and $T_2$ are compatible.
    Let $\mathcal{Z}_\delta$ be the inverse system
    \[\xymatrix@=0.5em{
        S_{\ell-1;\delta}\ar@{=}[rd]\\
        &S_{\ell-1;\delta}\\
        S_{\ell-1;\delta}\ar@{=}[ru]
    }\]
    Note that $S_{\ell-1;\delta}=\varprojlim(\mathcal{Z}_\delta)$ and
    $\pi_{\ell;\delta}:S_{\ell;\delta}\rightarrow S_{\ell-1;\delta}$ is the inverse limit of the morphism $\mathcal{Y}_\delta\rightarrow\mathcal{Z}_\delta$
    defined as
    \[\xymatrix@=0.5em{
        S_{\ell;\delta}\ar[rd]_{\pi_{\ell;\delta}}\ar[rrrr]^{\pi_{\ell;\delta}}&&&&S_{\ell-1;\delta}\ar@{=}[rd]\\
        &S_{\ell-1;\delta}\ar@{=}[rrrr]&&&&S_{\ell-1;\delta}\\
        S_{\ell-1;\delta}\ar@{=}[ru]\ar@{=}[rrrr]&&&&S_{\ell-1;\delta}\ar@{=}[ru]
    }\]
    We then derive that
    $\pi_{\ell;\delta}\circ\pi_{\ell+1;\delta}:S_{\ell+1;\delta}\rightarrow S_{\ell-1;\delta}$ is the inverse limit of the morphism 
    $\Pi_\delta:\mathcal{X}_\delta\rightarrow\mathcal{Z}_\delta$
    defined as
    \[\xymatrix@=0.5em{
        G_\delta\ar[rd]_{\pi_{\ell;\delta}\circ\rho_\delta}\ar[rrrr]^{\pi_{\ell;\delta}\circ\rho_\delta}&&&&S_{\ell-1;\delta}\ar@{=}[rd]\\
        &S_{\ell-1;\delta}\ar@{=}[rrrr]&&&&S_{\ell-1;\delta}\\
        H_\delta\ar[ru]^{\phi_\delta}\ar[rrrr]^{\phi_\delta}&&&&S_{\ell-1;\delta}\ar@{=}[ru]
    }\]
    Let $\mathcal{W}_\delta$ be the inverse system
    \[\xymatrix@=0.5em{
        S_{\ell-2;\delta}\ar@{=}[rd]\\
        &S_{\ell-2;\delta}\\
        S_{\ell-2;\delta}\ar@{=}[ru]
    }\]
    It can be seen that $\pi_{\ell-1;\delta}$ is the inverse limit of the
    morphism $\mathcal{Z}_\delta\rightarrow\mathcal{W}_\delta$ defined as
    \[\xymatrix@=0.5em{
        S_{\ell-1;\delta}\ar@{=}[rd]\ar[rrrr]^{\pi_{\ell-1;\delta}}&&&&S_{\ell-2;\delta}\ar@{=}[rd]\\
        &S_{\ell-1;\delta}\ar[rrrr]^{\pi_{\ell-1;\delta}}&&&&S_{\ell-2;\delta}\\
        S_{\ell-1;\delta}\ar@{=}[ru]\ar[rrrr]^{\pi_{\ell-1;\delta}}&&&&S_{\ell-2;\delta}\ar@{=}[ru]
    }\]
    Hence $\pi_{\ell-1;\delta}\circ\pi_{\ell;\delta}\circ\pi_{\ell+1;\delta}:S_{\ell+1;\delta}\rightarrow S_{\ell-1;\delta}$
    is the inverse limit of the morphism $\mathcal{X}_\delta\rightarrow\mathcal{W}_\delta$
    defined as
    \[\xymatrix@=0.5em{
        G_\delta\ar[rd]_{\pi_{\ell;\delta}\circ\rho_\delta}\ar[rrrr]^{\pi_{\ell-1;\delta}\circ\pi_{\ell;\delta}\circ\rho_\delta}&&&&S_{\ell-2;\delta}\ar@{=}[rd]\\
        &S_{\ell-1;\delta}\ar[rrrr]^{\pi_{\ell-1;\delta}}&&&&S_{\ell-2;\delta}\\
        H_\delta\ar[ru]^{\phi_\delta}\ar[rrrr]^{\pi_{\ell-1;\delta}\circ\phi_\delta}&&&&S_{\ell-2;\delta}\ar@{=}[ru]
    }\]
    By Corollary \ref{thm:limpullback}, $\ker(\pi_{\ell-1;\delta}\circ\pi_{\ell;\delta}\circ\pi_{\ell+1;\delta})$
    is the inverse limit of the inverse system $\mathcal{K}_\delta$ defined as
    \[\xymatrix@=0.5em{
        \ker(\pi_{\ell-1;\delta}\circ\pi_{\ell;\delta}\circ\rho_\delta)\ar[rd]_{\pi_{\ell;\delta}\circ\rho_\delta}\\
        &\ker(\pi_{\ell-1;\delta})\\
        \ker(\pi_{\ell-1;\delta}\circ\phi_\delta)\ar[ru]^{\phi_\delta}
    }\]
    Then by {\rm (iii)}, there is an isomorphism from $\ker(\pi_{\ell-1;1}\circ\pi_{\ell;1}\circ\pi_{\ell+1;1})$
    to $\ker(\pi_{\ell-1;2}\circ\pi_{\ell;2}\circ\pi_{\ell+1;2})$
    defined as the inverse limit of the following morphism $\mathcal{K}_1\rightarrow\mathcal{K}_2$:
    \[\xymatrix@=0.5em{
        \ker(\pi_{\ell-1;1}\circ\pi_{\ell;1}\circ\rho_1)\ar[rd]_{\pi_{\ell;1}\circ\rho_1}\ar[rr]^{\eta_2^{-1}}&&\ker(\pi_{\ell-1;2}\circ\phi_2)\ar[rd]^{\phi_2}\\
        &\ker(\pi_{\ell-1;1})\ar[rr]^{\sigma_{\ell-1}}&&\ker(\pi_{\ell-1;2})\\
        \ker(\pi_{\ell-1;1}\circ\phi_1)\ar[ru]^{\phi_1}\ar[rr]^{\eta_1}&&\ker(\pi_{\ell-1;2}\circ\pi_{\ell;2}\circ\rho_2)\ar[ru]_{\ \ \ \ \ \ \pi_{\ell;2}\circ\rho_2}
    }\]
    Therefore,
    \[\ker(\pi_{\ell-1;1}\circ\pi_{\ell;1}\circ\pi_{\ell+1;1})\cong\ker(\pi_{\ell-1;2}\circ\pi_{\ell;2}\circ\pi_{\ell+1;2})\]
    which implies that $(T_1,T_2)$ is reducible.
    Let $\pi_\delta$ be the limit projection
    from $S_{\ell+1;\delta}$ to $S_{\ell-1;\delta}$.
    By Theorem \ref{thm:projection}, $\pi_\delta:S_{\ell+1;\delta}\rightarrow S_{\ell-1;\delta}$
    is also the inverse limit of $\Pi_\delta$.
    Then by the definition of inverse limits, $\pi_\delta=\pi_{\ell;\delta}\circ\pi_{\ell+1;\delta}$,
    which implies that $S'_1\# S''_1=T_1$ and $S'_2\#S''_2=T_2$.
    Since $\mathcal{C}_\ell$ is closed under $\#$, it follows that $(T_1,T_2)\in\mathcal{C}_\ell$ by {\rm (iv)}. 
    Also recall that $\mathcal{C}_\ell\searrow\mathcal{C}_{\ell-1}$.
    We have \[\operatorname{Contra}^2(S_{\ell+1;1}\pi_{\ell+1;1}S_1,S_{\ell+1;2}\pi_{\ell+1;2}S_2)=\operatorname{Contra}(T_1,T_2)\in\mathcal{C}_{\ell-1}.\]
    Then Lemma \ref{lem:induction1} {\rm (ii.c)} is satisfied.
    This completes the proof.
\end{proof}

Lemma \ref{lem:induction3} indicates that if we could construct groups
$G_\delta$ and $H_\delta$ satisfying {\rm (ii)}, {\rm (iii)}, {\rm (iv)} 
for every $\ell$,
then we could prove inductively that if two groups have compatible group sequences then
they have a good witness system.
So far, we can only construct such an operator for compatible
sequences satisfying certain extra hypotheses.

\subsection{Proofs of main results}
\begin{definition}
    Let $G_1$ and $G_2$ be groups. 
    Let $f:G_1\rightarrow G_2$ be a homomorphism.
    If $f$ is surjective, a \emph{transversal} with respect to $f$
    is a map $\tau:G_2\rightarrow G_1$ such that $f\circ \tau=\mathrm{id}_{G_2}$.
    If $f$ is an isomorphism,
    then we define $f_\bullet$ as follows:
    $f_\bullet:\Aut(G_1)\rightarrow \Aut(G_2),\sigma\mapsto f\circ\sigma\circ f^{-1}$.
    Note that $f_\bullet$ is an isomorphism.

\end{definition}

\begin{definition}
    Let $G$ be a group and let $H$ be a subgroup of $G$.
    Let $\sigma$ be an automorphism of $G$. 
    If $\sigma(H)=H$, then
    we denote $\sigma|_{H,H}$ as $\sigma^H$.
    Let $A\le \Aut(G)$. Define $A_H:=\{\sigma\in A\mid \sigma(H)=H\}$ 
    and $A^H:=\{\sigma^H\in\Aut(H)\mid \sigma\in A_H\}$.
\end{definition}

\begin{definition}\label{def:Comp}
    Let $\mathbf{Comp}_2$ be the collection of all pairs of compatible group sequences.
    For $\ell\ge 3$, let $\mathbf{Comp}_\ell$ be the collection of all pairs
    of compatible group sequences $(((S_{i;1}),(\pi_{i;1})),((S_{i;2}),(\pi_{i;2})))$ of length $\ell$ 
    satisfying the following condition:
    for every $i\in\{2,\ldots,\ell-1\}$, there exists an isomorphism 
    $\sigma_i:\ker(\pi_{i;1})\rightarrow \ker(\pi_{i;2})$ such that
    \begin{equation}\label{eqn:addrestriction}(\sigma_i^{\bar{\delta}-\delta})_\bullet(\Inn(S_{i;\delta})^{\ker(\pi_{i;\delta})})\le \Aut(S_{i;\bar{\delta}})^{\ker(\pi_{i;\bar{\delta}})}\end{equation}
    where $\bar{\delta}=3-\delta$.
\end{definition}

\begin{lemma}\label{lem:comp_l}
    Let $\ell\ge 3$ and let $(S_1,S_2):=(((S_{i;1}),(\pi_{i;1})),((S_{i;2}),(\pi_{i;2})))$
    be a pair of compatible group sequences of length $\ell$. 
    The following two statements are equivalent.
    \begin{itemize}
        \item [\rm (i)] $(S_1,S_2)\in \mathbf{Comp}_\ell$.
        \item [\rm (ii)] Let $\bar{\delta}=3-\delta$. For every $ i\in\{2,\ldots,\ell-1\}$, there exist an isomorphism $\sigma_i:\ker(\pi_{i;1})\rightarrow \ker(\pi_{i;2})$, 
        a map $\alpha_{i;\bar{\delta}}:S_{i-1;\delta}\rightarrow \Aut(S_{i;\bar{\delta}})_{\ker(\pi_{i;\bar{\delta}})}$, 
        and a transversal $\tau_{i;\delta}:S_{i-1;\delta}\rightarrow S_{i;\delta}$ with respect to $\pi_{i;\delta}$
        such that 
        \[(\sigma_{i}^{\bar{\delta}-\delta})_\bullet(\Inn(\tau_{i;\delta}(x)^{-1})^{\ker(\pi_{i;\delta})})=\alpha_{i;\bar{\delta}}(x)^{\ker(\pi_{i;\bar{\delta}})},\mbox{ for all }x\in S_{i-1;\delta}.\]
    \end{itemize}
\end{lemma}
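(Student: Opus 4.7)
The plan is to prove both implications directly by unpacking the definitions and exploiting the fact that every element of $S_{i;\delta}$ decomposes uniquely as a kernel element times a transversal representative.

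For (i) $\Rightarrow$ (ii), I would fix $i$ and use the isomorphism $\sigma_i$ supplied by (i). Since $\pi_{i;\delta}:S_{i;\delta}\to S_{i-1;\delta}$ is surjective, choose any set-theoretic transversal $\tau_{i;\delta}:S_{i-1;\delta}\to S_{i;\delta}$. For each $x\in S_{i-1;\delta}$, the element $\Inn(\tau_{i;\delta}(x)^{-1})^{\ker(\pi_{i;\delta})}$ is well-defined because $\ker(\pi_{i;\delta})\trianglelefteq S_{i;\delta}$, and it lies in $\Inn(S_{i;\delta})^{\ker(\pi_{i;\delta})}$. By the inclusion in (i), its image under $(\sigma_i^{\bar{\delta}-\delta})_\bullet$ lies in $\Aut(S_{i;\bar{\delta}})^{\ker(\pi_{i;\bar{\delta}})}$, so by definition of the latter set I can pick some $\alpha_{i;\bar{\delta}}(x)\in \Aut(S_{i;\bar{\delta}})_{\ker(\pi_{i;\bar{\delta}})}$ whose restriction realises this automorphism. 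Iterating over $x$ (finitely many choices) defines the map $\alpha_{i;\bar{\delta}}$, and the required equation holds by construction.

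For (ii) $\Rightarrow$ (i), fix $i$ and let $s\in S_{i;\delta}$. Write $x:=\pi_{i;\delta}(s)$ and $k:=s\tau_{i;\delta}(x)^{-1}\in\ker(\pi_{i;\delta})$, so that $s=k\cdot\tau_{i;\delta}(x)$. Then
\[
\Inn(s)^{\ker(\pi_{i;\delta})}=\Inn(k)^{\ker(\pi_{i;\delta})}\circ\Inn(\tau_{i;\delta}(x))^{\ker(\pi_{i;\delta})}.
\]
Applying $(\sigma_i^{\bar{\delta}-\delta})_\bullet$, the second factor becomes $\bigl(\alpha_{i;\bar{\delta}}(x)^{-1}\bigr)^{\ker(\pi_{i;\bar{\delta}})}$ by (ii), since $\Inn(\tau_{i;\delta}(x))=\Inn(\tau_{i;\delta}(x)^{-1})^{-1}$ and $(\sigma_i^{\bar{\delta}-\delta})_\bullet$ is a homomorphism. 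The first factor is an inner automorphism of $\ker(\pi_{i;\delta})$ by $k$, and conjugating by $\sigma_i^{\bar{\delta}-\delta}$ turns it into the inner automorphism of $\ker(\pi_{i;\bar{\delta}})$ by $\sigma_i^{\bar{\delta}-\delta}(k)$, which equals $\Inn(\sigma_i^{\bar{\delta}-\delta}(k))^{\ker(\pi_{i;\bar{\delta}})}\in \Inn(S_{i;\bar{\delta}})^{\ker(\pi_{i;\bar{\delta}})}\le \Aut(S_{i;\bar{\delta}})^{\ker(\pi_{i;\bar{\delta}})}$. Since $\Aut(S_{i;\bar{\delta}})^{\ker(\pi_{i;\bar{\delta}})}$ is a subgroup of $\Aut(\ker(\pi_{i;\bar{\delta}}))$, the product of the two images lies in it, establishing the desired inclusion in \eqref{eqn:addrestriction}.

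The argument is mostly notational bookkeeping; there is no substantive obstacle, because the transversal decomposition $s=k\tau_{i;\delta}(x)$ reduces the inclusion in (i) to two cases (kernel elements and transversal elements) that are handled separately. The only mild subtlety to get right is the direction of the inverses in both the $\tau_{i;\delta}(x)^{-1}$ appearing in (ii) and in the exponent $\bar{\delta}-\delta\in\{\pm 1\}$ that swaps the roles of the two sides; once these sign conventions are tracked carefully the equivalence drops out.
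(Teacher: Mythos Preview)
Your proof is correct and follows essentially the same approach as the paper: in both directions you (and the paper) choose an arbitrary transversal, push inner automorphisms through $(\sigma_i^{\bar\delta-\delta})_\bullet$, and for (ii)$\Rightarrow$(i) decompose a general $s\in S_{i;\delta}$ as a kernel element times a transversal representative. The only cosmetic difference is that the paper writes $s=k\,\tau_{i;\delta}(x)^{-1}$ (noting that inverses of transversal elements also form a set of coset representatives) so that $\Inn(\tau_{i;\delta}(x)^{-1})$ appears directly, whereas you write $s=k\,\tau_{i;\delta}(x)$ and then invert to match the hypothesis---both work.
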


\begin{proof}
    Let $K_{i;\delta}:=\ker(\pi_{i;\delta})$.
    First, we prove that {\rm (i)} implies {\rm (ii)}.
    For $i\in\{2,\ldots,\ell-1\}$, let $\sigma_i:K_{i;1}\rightarrow K_{i;2}$ 
    be the isomorphism satisfying Condition (\ref{eqn:addrestriction}) in Definition \ref{def:Comp}.
    For $i\in\{2,\ldots,\ell-1\}$, 
    let $\tau_{i;\delta}:S_{i-1;\delta}\rightarrow S_{i;\delta}$ be a transversal with respect to $\pi_{i;\delta}$. 
    For every $x\in S_{i-1;\delta}$, we have $\Inn(\tau_{i;\delta}(x)^{-1})^{K_{i;\delta}}\in\Inn(S_{i;\delta})^{K_{i;\delta}}$,
    therefore $(\sigma_i^{\bar{\delta}-\delta})_\bullet(\Inn(\tau_{i;\delta}(x)^{-1})^{K_{i;\delta}})\in \Aut(S_{i;{\bar{\delta}}})^{K_{i;{\bar{\delta}}}}$.
    This implies that for every $x\in S_{i-1;\delta}$,
    there exists $y\in\Aut(S_{i;\bar{\delta}})_{K_{i;\bar{\delta}}}$
    such that
    $(\sigma_{i}^{\bar{\delta}-\delta})_\bullet(\Inn(\tau_{i;\delta}(x)^{-1})^{K_{i;\delta}})=y^{K_{i;\bar{\delta}}}.$
    Let $\alpha_{i;\bar{\delta}}:S_{i-1;\delta}\rightarrow \Aut(S_{i;\bar{\delta}})^{K_{i;\bar{\delta}}}$
    be given by $\alpha_{i;\bar{\delta}}(x)=y$. 

    Now we show that {\rm (ii)} implies {\rm (i)}.
    Let $s\in S_{i;\delta}$. 
    It can be noted that $\{\tau_{i;\delta}(x)\mid x\in S_{i-1;\delta}\}$
    is a set of representatives of the coset space $[S_{i;\delta}:K_{i;\delta}]$. 
    Then so is the set $\{\tau_{i;\delta}(x)^{-1}\mid x\in S_{i-1;\delta}\}$.
    Therefore, $s=k\tau_{i;\delta}(x)^{-1}$
    for some $k\in K_{i;\delta}$ and $x\in S_{i-1;\delta}$.
    Hence
    \[\begin{aligned}
        (\sigma_i^{\bar{\delta}-\delta})_\bullet(\Inn(s)^{K_{i;\delta}})&=(\sigma_i^{\bar{\delta}-\delta})_\bullet(\Inn(k)^{K_{i;\delta}}\circ \Inn(\tau_{i;\delta}(x)^{-1})^{K_{i;\delta}})\\
        &=(\sigma_i^{\bar{\delta}-\delta})_\bullet(\Inn(k)^{K_{i;\delta}})\circ (\sigma_i^{\bar{\delta}-\delta})_\bullet(\Inn(\tau_{i;\delta}(x)^{-1})^{K_{i;\delta}})
    \end{aligned}
    \]
    Note that
    $(\sigma_i^{\bar{\delta}-\delta})_\bullet(\Inn(k)^{K_{i;\delta}})=\Inn(\sigma_i^{\bar{\delta}-\delta}(k))^{K_{i;\delta}}\in\Aut(S_{i;\bar{\delta}})^{K_{i;\bar{\delta}}}$
    and by our assumption, 
    $(\sigma_i^{\bar{\delta}-\delta})_\bullet(\Inn(\tau_{i;\delta}(x)^{-1})^{K_{i;\delta}})\in \Aut(S_{i;{\bar{\delta}}})^{K_{i;\bar{\delta}}}$.
    Therefore, we deduce that 
    \[(\sigma_i^{\bar{\delta}-\delta})_\bullet(\Inn(s)^{K_{i;\delta}})\in \Aut(S_{i;{\bar{\delta}}})^{K_{i;\bar{\delta}}}\]
    which completes the proof.
\end{proof}

\begin{lemma}\label{lem:Comp}
    For $\ell\ge 3$, the collection $\mathbf{Comp}_\ell$ is closed under $\#$ and $\mathbf{Comp}_{\ell}\searrow\mathbf{Comp}_{\ell-1}$. 
\end{lemma}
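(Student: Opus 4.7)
The plan is to handle the two claims separately, exploiting that both $\#$ and $\operatorname{Contra}$ modify only the top of a group sequence while leaving the interior levels intact. Since the extra condition defining $\mathbf{Comp}_\ell$ (Equation~\eqref{eqn:addrestriction}) lives purely at the interior indices $\{2,\ldots,\ell-1\}$, the witnessing isomorphisms $\sigma_i$ there can simply be reused; what remains is to verify compatibility at the modified top level.

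For closure under $\#$, I would take $(S_1,S_2),(T_1,T_2)\in\mathbf{Comp}_\ell$ with $S_\delta\approx T_\delta$, and write $S_\delta=((S_{i;\delta}),(\pi_{i;\delta}))$, $T_\delta=((T_{i;\delta}),(\rho_{i;\delta}))$. The ``V'' diagram defining the new top group $S'_{\ell;\delta}$ of $S_\delta\#T_\delta$ is indexed by an in-forest poset, so by Theorem~\ref{thm:limpullback}(iii) applied to the trivial subsystem of the bottom, the kernel of the new top projection satisfies $\ker(\pi'_{\ell;\delta})\cong\ker(\pi_{\ell;\delta})\times\ker(\rho_{\ell;\delta})$. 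Applying compatibility of $(S_1,S_2)$ and of $(T_1,T_2)$ to the two factors separately yields $\ker(\pi'_{\ell;1})\cong\ker(\pi'_{\ell;2})$. All lower-index kernels and all interior data are unchanged, so Equation~\eqref{eqn:addrestriction} persists via the original $\sigma_i$ at every $i\in\{2,\ldots,\ell-1\}$, establishing $(S_1\#T_1,S_2\#T_2)\in\mathbf{Comp}_\ell$.

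For $\mathbf{Comp}_\ell\searrow\mathbf{Comp}_{\ell-1}$, I would take a reducible $(S_1,S_2)\in\mathbf{Comp}_\ell$. Compatibility of $\operatorname{Contra}(S_1,S_2)$ at indices $1,\ldots,\ell-2$ is inherited directly from $(S_1,S_2)$, while compatibility at the new top index $\ell-1$, where the map is $\pi_{\ell-1;\delta}\circ\pi_{\ell;\delta}$, is exactly the reducibility hypothesis. If $\ell=3$ then $\mathbf{Comp}_{\ell-1}=\mathbf{Comp}_2$ has no additional requirement, so we are done. If $\ell\ge 4$, the additional condition of $\mathbf{Comp}_{\ell-1}$ is to be checked at $i\in\{2,\ldots,\ell-2\}$, which is a subset of $\{2,\ldots,\ell-1\}$; for each such $i$, the group $S_{i;\delta}$, map $\pi_{i;\delta}$, and kernel $\ker(\pi_{i;\delta})$ appearing in $\operatorname{Contra}(S_\delta)$ coincide with those in $S_\delta$, so the original $\sigma_i$ transfers verbatim to witness Equation~\eqref{eqn:addrestriction} in the contracted sequence.

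The only genuinely substantive step is the pullback computation of $\ker(\pi'_{\ell;\delta})$ in the $\#$ case, which is an immediate application of Theorem~\ref{thm:limpullback}(iii); everything else is bookkeeping on indices. I therefore do not anticipate real technical obstacles, only the need to track carefully which data the two operations preserve and which they modify.
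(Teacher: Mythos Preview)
Your proposal is correct and follows essentially the same approach as the paper. Both arguments exploit that $\#$ and $\operatorname{Contra}$ alter only the top level of a sequence, so the interior data witnessing Equation~\eqref{eqn:addrestriction} at indices $2,\ldots,\ell-1$ (respectively $2,\ldots,\ell-2$) carries over verbatim; and both compute $\ker(\pi'_{\ell;\delta})\cong\ker(\pi_{\ell;\delta})\times\ker(\rho_{\ell;\delta})$ via Theorem~\ref{thm:limpullback}(iii), the paper being slightly more explicit in first invoking Theorem~\ref{thm:projection}(ii) to realise $\pi'_{\ell;\delta}$ as an inverse limit of a morphism.
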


\begin{proof}
    Let $S_\delta:=((S_{i;\delta})_{0\le i\le\ell},(\pi_{i;\delta})_{1\le i\le\ell})$
    and $T_\delta:=((T_{i;\delta})_{0\le i\le\ell},(\rho_i)_{1\le i\le\ell})$ such that $S\approx T$.
    Let $(S_1,S_2),(T_1,T_2)\in \mathbf{Comp}_\ell$.
    We first show that $\mathbf{Comp}_\ell$ is closed under $\#$.
    Note that $S_\delta\# T_\delta\approx S_\delta$.
    Hence $(S_1\# T_1,S_2\# T_2)$ also satisfies Condition
    (\ref{eqn:addrestriction}) in Definition \ref{def:Comp}.
    Then by the definition of $\mathbf{Comp}_{\ell}$ for $\ell\ge 3$, 
    it is sufficient to show that $S_1\# T_1$ and $S_2\# T_2$ are compatible.
    Let $S'_{\ell;\delta}$ be the inverse limit of the inverse system $\mathcal{A}_\delta$ defined as
    \[\xymatrix@=0.5em{
        S_{\ell;\delta}\ar[rd]^{\pi_{\ell;\delta}}&\\
        &S_{\ell-1;\delta}\\
        T_{\ell;\delta}\ar[ru]_{\rho_{\ell;\delta}}
    }\]
    and let $\pi'_{\ell;\delta}:S'_{\ell;\delta}\rightarrow S_{\ell-1;\delta}$
    be the limit projection from $S'_{\ell;\delta}$ to $S_{\ell-1;\delta}$.
    By the definition of $\#$, 
    \[\xymatrix{
        S_\delta\# T_\delta=(S'_{\ell;\delta}\ar[r]^{\pi'_{\ell;\delta}}&S_{\ell-1;\delta}\ar[r]^{\pi_{\ell-1;\delta}}&\cdots\ar[r]^{\pi_{2;\delta}}&S_{1;\delta}\ar[r]^{\pi_{1;\delta}}&1).
    }\]
    Let $\mathcal{B}_\delta$ be the inverse system
    \[\xymatrix@=0.5em{
        S_{\ell-1;\delta}\ar@{=}[rd]\\
        &S_{\ell-1;\delta}\\
        S_{\ell-1;\delta}\ar@{=}[ru]
    }\] 
    Note that $S_{\ell-1;\delta}=\varprojlim(\mathcal{B}_\delta)$.
    By Theorem \ref{thm:projection} {\rm (ii)}, $\pi'_{\ell;\delta}$
    is the inverse limit of the morphism $\mathcal{A}_\delta\rightarrow\mathcal{B}_\delta$
    defined as
    \[\xymatrix@=0.5em{
        S_{\ell;\delta}\ar[rd]^{\pi_{\ell;\delta}}\ar[rrrr]^{\pi_{\ell;\delta}}&&&&S_{\ell-1;\delta}\ar@{=}[rd]\\
        &S_{\ell-1;\delta}\ar@{=}[rrrr]&&&&S_{\ell-1;\delta}\\
        T_{\ell;\delta}\ar[ru]^{\rho_{\ell;\delta}}\ar[rrrr]^{\rho_{\ell;\delta}}&&&&S_{\ell-1;\delta}\ar@{=}[ru]
    }\]
    Then by Theorem \ref{thm:limpullback} {\rm (iii)}, $\ker(\pi'_{\ell;\delta})$ is the inverse limit of the inverse system given by
    \[\xymatrix@=0.5em{
        \ker(\pi_{\ell;\delta})\ar[rd]&\\
        &1\\
        \ker(\rho_{\ell;\delta})\ar[ru]
    }\]
    Since $S_1$ and $S_2$ are compatible, we have that $\ker(\pi_{\ell;1})\cong\ker(\pi_{\ell;2})$.
    Similarly, we have that $\ker(\rho_{\ell;1})\cong\ker(\rho_{\ell;2})$.
    Hence
    \[\ker(\pi'_{\ell;1})\cong \ker(\pi_{\ell;1})\times\ker(\rho_{\ell;1})\cong\ker(\pi_{\ell;2})\times\ker(\rho_{\ell;2})\cong\ker(\pi'_{\ell;2})\]
    which implies that $S_1\# T_1$ and $S_2\# T_2$ are compatible. 

    Now we turn to the reducibility of $\mathbf{Comp}_\ell$ to $\mathbf{Comp}_{\ell-1}$.
    Let $(S_1,S_2)\in \mathbf{Comp}_\ell$ be reducible
    and for $2\le i\le\ell-1$, let $\sigma_i:\ker(\pi_{i;1})\rightarrow\ker(\pi_{i;2})$
    such that 
    \[(\sigma_i^{\bar{\delta}-\delta})_\bullet(\Inn(S_{i;\delta})^{\ker(\pi_{i;\delta})})\le \Aut(S_{i;\bar{\delta}})^{\ker(\pi_{i;\bar{\delta}})}\]
    where ${\bar{\delta}}=3-\delta$.
    Let $((S'_{i;\delta}),(\pi'_{i;\delta})):=\operatorname{Contra}(S_\delta)$
    and let $\ell':=\ell-1$.
    Note that for $2\le i\le \ell'-1$, $S'_{i;\delta}=S_{i;\delta}$ and 
    $\pi'_{i;\delta}=\pi_{i;\delta}$.
    We therefore let $\sigma'_i:=\sigma_i$ and we have 
    \[(\sigma'_i)^{\bar{\delta}-\delta}_\bullet(\Inn(S'_{i;\delta})^{\ker(\pi'_{i;\delta})})\le \Aut(S'_{i;\bar{\delta}})^{\ker(\pi'_{i;\bar{\delta}})}\]
    for $2\le i\le\ell'-1$, so $\operatorname{Contra}(S_1,S_2)\in \mathbf{Comp}_{\ell-1}$. 
    This completes the proof.
\end{proof}

\begin{lemma}
    \label{lem:almostequal}
    Let $\ell\ge 2$ and let $(S_1,S_2)$ and $(T_1,T_2)$ be pairs of compatible surjective sequences of length $\ell$. 
    If $(S_1,S_2)\in\mathbf{Comp}_\ell$ and $T_\delta\approx S_\delta$,
    then $(T_1,T_2)\in\mathbf{Comp}_\ell$.
\end{lemma}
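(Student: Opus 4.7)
The plan is to observe that membership in $\mathbf{Comp}_\ell$ for $\ell\ge 3$ is controlled entirely by data at indices $i\in\{2,\ldots,\ell-1\}$, which is precisely the portion of a sequence preserved under $\approx$. Consequently the isomorphisms witnessing $(S_1,S_2)\in\mathbf{Comp}_\ell$ witness $(T_1,T_2)\in\mathbf{Comp}_\ell$ as well.

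First I would dispose of the case $\ell=2$: by the definition of $\mathbf{Comp}_2$, it contains every pair of compatible surjective group sequences of length $2$, so the hypothesis that $(T_1,T_2)$ is compatible immediately gives $(T_1,T_2)\in\mathbf{Comp}_2$.

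For $\ell\ge 3$, write $S_\delta := ((S_{i;\delta}),(\pi_{i;\delta}))$ and $T_\delta := ((T_{i;\delta}),(\rho_{i;\delta}))$. By the definition of $\approx$, we have $T_{i;\delta}=S_{i;\delta}$ and $\rho_{i;\delta}=\pi_{i;\delta}$ for all $1\le i\le\ell-1$; in particular $\ker(\rho_{i;\delta})=\ker(\pi_{i;\delta})$ for every $i\in\{2,\ldots,\ell-1\}$. Since $(S_1,S_2)\in\mathbf{Comp}_\ell$, for each such $i$ there exists an isomorphism $\sigma_i\colon\ker(\pi_{i;1})\to\ker(\pi_{i;2})$ with
\[(\sigma_i^{\bar\delta-\delta})_\bullet(\Inn(S_{i;\delta})^{\ker(\pi_{i;\delta})})\le\Aut(S_{i;\bar\delta})^{\ker(\pi_{i;\bar\delta})},\]
where $\bar\delta=3-\delta$. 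Since the groups $S_{i;\delta}=T_{i;\delta}$, the subgroups $\ker(\pi_{i;\delta})=\ker(\rho_{i;\delta})$, and therefore the subgroups $\Inn(S_{i;\delta})^{\ker(\pi_{i;\delta})}$ and $\Aut(S_{i;\bar\delta})^{\ker(\pi_{i;\bar\delta})}$, as well as the map $(\sigma_i^{\bar\delta-\delta})_\bullet$, all coincide with their counterparts computed from $T_\delta$, the same isomorphisms $\sigma_i$ verify the condition~(\ref{eqn:addrestriction}) for $T_\delta$. Combined with the compatibility of $T_1$ and $T_2$ granted by hypothesis, this yields $(T_1,T_2)\in\mathbf{Comp}_\ell$.

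I do not expect any substantive obstacle: the lemma is a bookkeeping observation that the ``extra restriction'' in Definition~\ref{def:Comp} depends only on the truncated data $((S_{i;\delta})_{0\le i\le\ell-1},(\pi_{i;\delta})_{1\le i\le\ell-1})$, which is exactly what the relation $\approx$ preserves. The only point requiring care is to confirm that every ingredient appearing in~(\ref{eqn:addrestriction})---the source $\Inn(S_{i;\delta})^{\ker(\pi_{i;\delta})}$, the target $\Aut(S_{i;\bar\delta})^{\ker(\pi_{i;\bar\delta})}$, and the conjugation map $(\sigma_i^{\bar\delta-\delta})_\bullet$---is manifestly determined by that initial segment, which is immediate from their definitions.
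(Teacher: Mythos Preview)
Your proposal is correct and follows essentially the same approach as the paper's proof: both observe that the extra condition in Definition~\ref{def:Comp} depends only on the data at indices $1\le i\le\ell-1$, which $\approx$ preserves, so the same isomorphisms $\sigma_i$ work for $(T_1,T_2)$. Your explicit treatment of the $\ell=2$ case is a minor addition; the paper leaves it implicit since the range $2\le i\le\ell-1$ is then empty.
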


\begin{proof}
    Write $S_\delta=((S_{i;\delta})_{0\le i\le\ell},(\pi_{i;\delta})_{1\le i\le\ell})$
    and $T_\delta=((T_{i;\delta})_{0\le i\ell},(\rho_{i;\delta})_{1\le i\le\ell-1})$.
    Since $(S_1,S_2)\in\mathbf{Comp}_\ell$, 
    there exists $\sigma_i$ for $2\le i\le\ell-1$ such that 
    \[(\sigma_i^{\bar{\delta}-\delta})_\bullet(\Inn(S_{i;\delta})^{\ker(\pi_{i;\delta})})\le \Aut(S_{i;\bar{\delta}})^{\ker(\pi_{i;\bar{\delta}})}.\]
    Note that $T_{i;\delta}=S_{i;\delta}$ and $\pi_{i;\delta}=\rho_{i;\delta}$ 
    for every $1\le i\le\ell-1$.
    We then have 
    \[(\sigma_i^{\bar{\delta}-\delta})_\bullet(\Inn(T_{i;\delta})^{\ker(\rho_{i;\delta})})\le \Aut(T_{i;\bar{\delta}})^{\ker(\rho_{i;\bar{\delta}})}.\]
    Hence $(T_1,T_2)\in\mathbf{Comp}_\ell$.
\end{proof}

\begin{theorem}\label{thm:comp_l}
    Let $\ell\ge 2$. If $S_\delta:=((S_{i;\delta})_{0\le i\le \ell},(\pi_{i;\delta})_{1\le i\le\ell})$
    are surjective group sequences of length $\ell$ and $(S_1,S_2)\in\mathbf{Comp}_\ell$,
    then there exists a good witness system associated with $(S_1,S_2)$. 
    In particular, $S_{\ell;1}$ and $S_{\ell;2}$ are compatible.
    
\end{theorem}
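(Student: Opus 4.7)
The plan is to prove the theorem by induction on $\ell$. The base case $\ell = 2$ is exactly Lemma \ref{lem:length2}. For the inductive step with $\ell \ge 3$, I would assume the result for all sequences of length $\ell - 1$ and take $(S_1, S_2) \in \mathbf{Comp}_\ell$. My strategy is to invoke Lemma \ref{lem:induction3} with $\mathcal{C}_\ell := \mathbf{Comp}_\ell$ and $\mathcal{C}_{\ell-1} := \mathbf{Comp}_{\ell-1}$. By Lemma \ref{lem:Comp}, $\mathbf{Comp}_\ell$ is closed under $\#$ and $\mathbf{Comp}_\ell \searrow \mathbf{Comp}_{\ell-1}$, so the structural hypotheses of Lemma \ref{lem:induction3} hold, and the inductive hypothesis supplies condition (i) of that lemma.

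The substantive task is to produce groups $G_\delta$, $H_\delta$, homomorphisms $\rho_\delta$, $\phi_\delta$, and isomorphisms $\sigma_{\ell-1}$, $\eta_\delta$ satisfying (ii)--(iv) of Lemma \ref{lem:induction3}. Following the sketch in Section \ref{subsec:l3+} and modelling on Example \ref{examp:hw}, I would take $G_\delta$ to be the base subgroup of a hybrid wreath product $\operatorname{HW}(S_{\ell;\delta}, T_{\bar\delta}, \theta_\delta)$, where $T_{\bar\delta}$ encodes the last two terms of $S_{\bar\delta}$ and $\theta_\delta$ is the composite $S_{\ell;\delta}\to S_{\ell-1;\delta}\to T_{\bar\delta}$ built using the isomorphism $S_{\ell-1;1}\to S_{\ell-1;2}$ coming from the compatibility of $S_1$ and $S_2$. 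Lemma \ref{lem:bw} presents $G_\delta$ as an inverse limit; I would set $\rho_\delta$ to be an evaluation map $p_\nu$, and the product structure of the base subgroup combined with Lemma \ref{lem:goodproj} then yields (ii), namely trivial extendability of $\rho_\delta$ at $\ker(\pi_{\ell;\delta})$. The group $H_\delta$ and the map $\phi_\delta$ are defined symmetrically with $\delta$ and $\bar\delta$ swapped; the twisting automorphisms used in each hybrid wreath product are precisely those furnished by the transversal $\tau_{\ell-1;\delta}$ and the map $\alpha_{\ell-1;\bar\delta}$ of Lemma \ref{lem:comp_l}(ii).

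Condition (iv) should then reduce, via Lemma \ref{lem:almostequal}, to compatibility of the pairs $(S_1', S_2')$ and $(S_1'', S_2'')$, because each such sequence is almost equal to either $S_1$ or $S_2$. Compatibility at the top level follows from computing $\ker(\pi_{\ell;\delta} \circ \rho_\delta)$ and $\ker(\phi_\delta)$ using Theorem \ref{thm:limpullback}(iii) together with the isomorphism $\ker(\pi_{\ell;1}) \cong \ker(\pi_{\ell;2})$ provided by the compatibility of $S_1$ and $S_2$. For (iii), I would take $\sigma_{\ell-1}$ to be the isomorphism furnished by Definition \ref{def:Comp} at index $\ell-1$, and build the $\eta_\delta$ from the direct-product decompositions of the two kernels appearing in the respective diagrams.

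The main obstacle is verifying (iii), and specifically checking that the two squares commute. This is where the defining condition $(\sigma_{\ell-1}^{\bar\delta-\delta})_\bullet(\operatorname{Inn}(S_{\ell-1;\delta})^{\ker(\pi_{\ell-1;\delta})}) \le \operatorname{Aut}(S_{\ell-1;\bar\delta})^{\ker(\pi_{\ell-1;\bar\delta})}$ from Definition \ref{def:Comp} becomes indispensable: it guarantees that the conjugation-twist automorphisms used on the $\delta$-side of the hybrid wreath product are matched, after transport by $\sigma_{\ell-1}$, by automorphisms of $S_{\ell-1;\bar\delta}$ that preserve $\ker(\pi_{\ell-1;\bar\delta})$, which is precisely what is needed to align the kernel-level maps $\pi_{\ell;\bar\delta}\circ\rho_{\bar\delta}$ and $\phi_\delta$. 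Without this condition the two inverse-limit computations of the kernels on the two sides of the diagrams would not match, the $\eta_\delta$ could not be defined, and the induction would break down.
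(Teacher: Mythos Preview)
Your overall architecture is exactly right: induct on $\ell$, use Lemma~\ref{lem:length2} for the base case, and for $\ell\ge 3$ feed everything into Lemma~\ref{lem:induction3} with $\mathcal{C}_\ell=\mathbf{Comp}_\ell$, quoting Lemma~\ref{lem:Comp} for the closure and reducibility hypotheses and the inductive hypothesis for condition~(i). That part matches the paper precisely.

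The gap is in your construction of $G_\delta$, $H_\delta$. You build $\theta_\delta$ as a composite $S_{\ell;\delta}\to S_{\ell-1;\delta}\to T_{\bar\delta}$ ``using the isomorphism $S_{\ell-1;1}\to S_{\ell-1;2}$ coming from the compatibility of $S_1$ and $S_2$''. No such isomorphism exists: compatibility of group sequences (Definition~\ref{def:groupsequence+surjectivesequence}) only supplies isomorphisms $\sigma_i:\ker(\pi_{i;1})\to\ker(\pi_{i;2})$ of the \emph{factors}, not of the groups $S_{i;\delta}$ themselves (think of $\mathbb{Z}_4$ versus $\mathbb{Z}_2^2$). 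Consequently your $\theta_\delta$, and hence your $G_\delta$, is not well-defined.

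Relatedly, $G_\delta$ and $H_\delta$ cannot be obtained from one another by ``swapping $\delta$ and $\bar\delta$''. In the paper's proof they are genuinely different objects. The group $G_\delta$ is the inverse limit of $n=|S_{\ell-2;\delta}|$ copies of $S_{\ell;\delta}$ over $S_{\ell-1;\delta}$ along the twisted maps $\alpha_\delta(x_{i;\bar\delta})\circ\pi_{\ell;\delta}$, with $\rho_\delta$ the projection to the first copy; everything here lives on side $\delta$, and only the \emph{indexing} of the twists comes from $S_{\ell-2;\bar\delta}$. By contrast $H_\delta=\operatorname{HW}(T_{\bar\delta},S_{\ell-1;\delta},\theta_\delta)$ is the \emph{full} hybrid wreath product (not its base subgroup), where $T_{\bar\delta}=\ker(\pi_{\ell-1;\bar\delta}\circ\pi_{\ell;\bar\delta})\le S_{\ell;\bar\delta}$ and $\theta_\delta:T_{\bar\delta}\to\ker(\pi_{\ell-1;\bar\delta})\xrightarrow{\sigma_{\ell-1}^{\delta-\bar\delta}}\ker(\pi_{\ell-1;\delta})\hookrightarrow S_{\ell-1;\delta}$ uses only the kernel isomorphism $\sigma_{\ell-1}$. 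Taking $\phi_\delta$ to be the standard map then surjects onto $S_{\ell-1;\delta}$ as Lemma~\ref{lem:induction3} requires, and $\ker(\pi_{\ell-1;\delta}\circ\phi_\delta)=\operatorname{BW}(T_{\bar\delta},S_{\ell-1;\delta},\theta_\delta)$ is, by Lemma~\ref{lem:bw}(iii), an inverse limit of $n$ copies of $T_{\bar\delta}$ over $\ker(\pi_{\ell-1;\delta})$. This is exactly the shape of $\ker(\pi_{\ell-1;\bar\delta}\circ\pi_{\ell;\bar\delta}\circ\rho_{\bar\delta})$ (computed via Theorem~\ref{thm:limpullback}), and the matching of the two inverse systems---hence the definition of $\eta_\delta$ and the commutativity in (iii)---comes down to the equation of Lemma~\ref{lem:comp_l}(ii) at $i=\ell-1$. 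Your instinct that the $\mathbf{Comp}_\ell$ condition is what makes (iii) go through is correct, but it enters via this asymmetric pairing of an inverse-limit $G_\delta$ with a hybrid-wreath $H_\delta$, not through a symmetric construction.
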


\begin{proof}
    We prove this by induction on $\ell$.
    For $\ell=2$, this follows from Lemma \ref{lem:length2}. Let $\bar{\delta}=3-\delta$.
    By  Definition~\ref{def:groupsequence+surjectivesequence}, all group sequences in $\mathbf{Comp}_\ell$ are surjective.
    For $\ell\ge 3$, by the induction hypothesis and Lemma \ref{lem:Comp},
    we only need to construct
    $G_\delta$ and $H_\delta$ with $\rho_\delta$, $\phi_\delta$ and $\eta_\delta$
    satisfying the conditions {\rm (ii)}, {\rm (iii)}, {\rm (iv)} in Lemma \ref{lem:induction3} with $\mathcal{C}_\ell=\mathbf{Comp}_\ell$.
    Following the notation in Lemma \ref{lem:comp_l}, for $2\le i\le\ell-1$,
    we let $\sigma_i:\ker(\pi_{i;1})\rightarrow\ker(\pi_{i;2})$ be isomorphisms,
    $\alpha_{i;\bar{\delta}}:S_{i-1;\delta}\rightarrow \Aut(S_{i;\bar{\delta}})_{\ker(\pi_{i;\bar{\delta}})}$ be maps,
    and $\tau_{i;\delta}:S_{i-1;\delta}\rightarrow S_{i;\delta}$ be transversals with respect to $\pi_{i;\delta}:S_{i;\delta}\rightarrow S_{i-1;\delta}$
    such that
    \[(\sigma_{i}^{\bar{\delta}-\delta})_\bullet(\Inn(\tau_{i;\delta}(x)^{-1})^{\ker(\pi_{i;\delta})})=\alpha_{i;\bar{\delta}}(x)^{\ker(\pi_{i;\bar{\delta}})},\mbox{ for all }x\in S_{i-1;\delta}.\]
    Let $n:=|S_{\ell-2;1}|$ ($=|S_{\ell-2;2}|$) and 
    $S_{\ell-2;\delta}:=\{x_{1;\delta},\ldots,x_{n;\delta}\}$ where $x_{1;\delta}:=1$.
    To be convenient, 
    we also let $\alpha_{\delta}:=\alpha_{\ell-1;\delta}$.
    Let $\mathcal{X}_\delta$ be the inverse system 
    \[\xymatrix@=0.5em{
        S_{\ell;\delta}\ar@{.}[dd]_n\ar[rd]^{\alpha_\delta(x_{1;\bar{\delta}})\circ\pi_{\ell;\delta}}&\\
        &S_{\ell-1;\delta}\\
        S_{\ell;\delta}\ar[ru]_{\alpha_\delta(x_{n;\bar{\delta}})\circ\pi_{\ell;\delta}}
    }\]
    and $G_\delta:=\varprojlim(\mathcal{X}_\delta)$.
    Let $\rho_\delta:G_\delta\rightarrow S_{\ell;\delta}$ 
    be the limit projection from
    $G_\delta$ to the first $S_{\ell;\delta}$ in the column shown in the above diagram.
    Let $T_\delta:=\ker(\pi_{\ell-1;\delta}\circ\pi_{\ell;\delta})$, let 
    $\iota_\delta:\ker(\pi_{\ell-1;\delta})\rightarrow S_{\ell-1;\delta}$ be the inclusion,
    and let $\theta_\delta:=\iota_\delta\circ\sigma^{\delta-\bar{\delta}}_{\ell-1}\circ (\pi_{\ell;\bar{\delta}}|_{T_{\bar{\delta}},\ker(\pi_{\ell-1;\bar{\delta}})})$.
    Define $H_\delta:=\operatorname{HW}(T_{\bar{\delta}},S_{\ell-1;\delta},\theta_\delta).$
    Let $\phi_\delta:H_\delta\rightarrow S_{\ell-1;\delta}$ 
    be the standard map of $H_\delta$.

    Now we compute $\ker(\pi_{\ell-1;\delta}\circ\phi_\delta)$
    and $\ker(\pi_{\ell-1;\delta}\circ\pi_{\ell;\delta}\circ\rho_\delta)$
    so that we can construct $\eta_\delta$. 
    Note that $\ker(\pi_{\ell-1;\delta})=\theta_\delta(T_{\bar{\delta}})$.
    Then
   $\ker(\pi_{\ell-1;\delta}\circ\phi_\delta)=\phi^{-1}_\delta(\ker(\pi_{\ell-1;\delta}))=\operatorname{BW}(T_{\bar{\delta}},S_{\ell-1;\delta},\theta_\delta)$.
    Let $\mathcal{Y}_\delta$ be the inverse system
    \[\xymatrix@=0.5em{
        T_{\bar{\delta}}\ar@{.}[dd]_{n}\ar[rd]^{\Inn(x_{1;\delta}^{-1})\circ\theta_\delta}\\
        &\ker(\pi_{\ell-1;\delta})\\
        T_{\bar{\delta}}\ar[ru]_{\Inn(x_{n;\delta}^{-1})\circ\theta_\delta}
    }\]
    By Lemma \ref{lem:bw} {\rm (iii)}, $\ker(\pi_{\ell-1;\delta}\circ\phi_\delta)=\varprojlim\mathcal{Y}_\delta$.
    As for $\ker(\pi_{\ell-1;\delta}\circ\pi_{\ell;\delta}\circ\rho_\delta)$,
    notice that $\pi_{\ell;\delta}\circ\rho_\delta$ is the inverse limit
    projection from $G_\delta$ to $S_{\ell-1;\delta}$.
    Let $\mathcal{S}_{\ell-1;\delta}$ be the inverse system
    \[\xymatrix@=0.5em{
        S_{\ell-1;\delta}\ar@{.}[dd]_n\ar@{=}[rd]&\\
        &S_{\ell-1;\delta}\\
        S_{\ell-1;\delta}\ar@{=}[ru]
    }\]
    Then by Theorem \ref{thm:projection} {\rm (ii)}, $\pi_{\ell;\delta}\circ\rho_\delta$
    is the inverse limit of the morphism $\mathcal{X}_\delta\rightarrow \mathcal{S}_{\ell-1;\delta}$
    defined as
    \[\xymatrix@=0.5em{
        S_{\ell;\delta}\ar@{.}[dd]_n\ar[rd]^{\alpha_\delta(x_{1;\bar{\delta}})\circ\pi_{\ell;\delta}}\ar[rrrrrr]^{\alpha_\delta(x_{1;\bar{\delta}})\circ\pi_{\ell;\delta}}&&&&&&S_{\ell-1;\delta}\ar@{=}[rd]\ar@{.}[dd]_(.3)n\\
        &S_{\ell-1;\delta}\ar@{=}[rrrrrr]&&&&& &S_{\ell-1;\delta}\\
        S_{\ell;\delta}\ar[ru]_{\alpha_\delta(x_{n;\bar{\delta}})\circ\pi_{\ell;\delta}}\ar[rrrrrr]_{\alpha_\delta(x_{n;\bar{\delta}})\circ\pi_{\ell;\delta}}&&&&&&S_{\ell-1;\delta}\ar@{=}[ru]
    }\]
    Let $\mathcal{S}_{\ell-2;\delta}$ be the inverse system
    \[\xymatrix@=0.5em{
        S_{\ell-2;\delta}\ar@{.}[dd]_n\ar@{=}[rd]&\\
        &S_{\ell-2;\delta}\\
        S_{\ell-2;\delta}\ar@{=}[ru]
    }\]
    Then $\pi_{\ell-1;\delta}$ is the inverse limit of the morphism $\mathcal{S}_{\ell-1;\delta}\rightarrow\mathcal{S}_{\ell-2;\delta}$
    defined as
    \[\xymatrix@=0.5em{
        S_{\ell-1;\delta}\ar@{.}[dd]_{n}\ar@{=}[rd]\ar[rrrrrr]^{\pi_{\ell-1;\delta}}&&&&&&S_{\ell-2;\delta}\ar@{.}[dd]_(.3)n\ar@{=}[rd]\\
        &S_{\ell-2;\delta}\ar[rrrrrr]^{\pi_{\ell-1;\delta}}&&&&& &S_{\ell-2;\delta}\\
        S_{\ell-1;\delta}\ar@{=}[ru]\ar[rrrrrr]^{\pi_{\ell-1;\delta}}&&&&&&S_{\ell-2;\delta}\ar@{=}[ru]
    }\]
    and $\pi_{\ell-1;\delta}\circ\pi_{\ell;\delta}\circ\rho_\delta$
    is the inverse limit of the morphism $\mathcal{X}_\delta\rightarrow \mathcal{S}_{\ell-2;\delta}$
    defined as
    \[\xymatrix@=0.5em{
        S_{\ell;\delta}\ar@{.}[dd]_n\ar[rd]^{\alpha_\delta(x_{1;\bar{\delta}})\circ\pi_{\ell;\delta}}\ar[rrrrrr]^{\pi_{\ell-1;\delta}\circ\alpha_\delta(x_{1;\bar{\delta}})\circ\pi_{\ell;\delta}}&&&&&&S_{\ell-2;\delta}\ar@{.}[dd]_(.3)n\ar@{=}[rd]\\
        &S_{\ell-1;\delta}\ar[rrrrrr]^{\pi_{\ell-1;\delta}}&&&&& &S_{\ell-1;\delta}\\
        S_{\ell;\delta}\ar[ru]_{\alpha_\delta(x_{n;\bar{\delta}})\circ\pi_{\ell;\delta}}\ar[rrrrrr]_{\pi_{\ell-1;\delta}\circ\alpha_\delta(x_{n;\bar{\delta}})\circ\pi_{\ell;\delta}}&&&&&&S_{\ell-2;\delta}\ar@{=}[ru]
    }\]
    Let $\mathcal{Z}_\delta$ be the inverse system
    \[\xymatrix@=0.5em{
        T_\delta\ar@{.}[dd]_n\ar[rd]^{\alpha_\delta(x_{1;\bar{\delta}})\circ\pi_{\ell;\delta}}&\\
        &\ker(\pi_{\ell-1;\delta})\\
        T_\delta\ar[ru]_{\alpha_\delta(x_{n;\bar{\delta}})\circ\pi_{\ell;\delta}}
    }\]
    By Theorem \ref{thm:limpullback} {\rm (iii)},   $\ker(\pi_{\ell-1;\delta}\circ\pi_{\ell;\delta}\circ \rho_\delta)=\varprojlim\mathcal{Z}_\delta$.
    
    Let $\eta_\delta:\ker(\pi_{\ell-1;\delta}\circ\phi_\delta)\rightarrow \ker(\pi_{\ell-1;\bar{\delta}}\circ\pi_{\ell;\bar{\delta}}\circ\rho_{\bar{\delta}})$ 
    be the inverse limit of the morphism $\mathcal{Y}_\delta\rightarrow \mathcal{Z}_{\bar\delta}$ defined as follows
    \[\xymatrix@=0.5em{
        T_{\bar\delta}\ar@{.}[dddd]_n\ar[rrdd]^{\Inn(x_{1;\delta}^{-1})\circ\theta_\delta}\ar@{=}[rrrrrr]&&&&&&T_{\bar\delta}\ar@{.}[dddd]^(.3)n\ar[rrdd]^{\alpha_{\bar{\delta}}(x_{1;\delta})\circ\pi_{\ell;{\bar{\delta}}}}\\
        &\\
        &&\ker(\pi_{\ell-1;\delta})\ar[rrrrrr]^{\sigma_{\ell-1}^{{\bar{\delta}}-\delta}\ \ \ \ \ }&&&&&&\ker(\pi_{\ell-1;{\bar{\delta}}})\\
        &\\
        T_{\bar{\delta}}\ar[rruu]_{\Inn(x_{n;\delta}^{-1})\circ\theta_\delta}\ar@{=}[rrrrrr]&&&&&&T_{\bar{\delta}}\ar[rruu]_{\alpha_{\bar{\delta}}(x_{n;\delta})\circ\pi_{\ell;{\bar{\delta}}}}
    }\]
    By Lemma \ref{lem:limiso} {\rm (ii)}, both $\eta_1$ and $\eta_2$ are isomorphisms.

    Now we prove that
    $G_\delta$, $H_\delta$, $\rho_\delta$, $\phi_\delta$ and $\eta_\delta$  
    satisfy Lemma \ref{lem:induction3} {\rm (ii)}, {\rm (iii)} and {\rm (iv)},
    which will complete the proof.
    Note that $\ker(\alpha_\delta(x_{1;{\bar{\delta}}})\circ \pi_{\ell;\delta})=\ker(\pi_{\ell;\delta})$.
    Part {\rm (ii)} holds by Lemma \ref{lem:goodproj}, where $\rho_\delta$ plays the role of $p_1$ and 
    $\mathcal{X}_\delta$ plays the role of $\mathcal{X}$.
    Using the notation in Lemma \ref{lem:induction3} {\rm (iv)},
    it can be seen that both $S'_\delta\approx S_\delta$ and $S''_\delta\approx S_\delta$.
    Note that 
    $(S_1,S_2)\in\mathbf{Comp}_\ell$.
    By Lemma \ref{lem:almostequal},
    $(S'_1,S'_2),(S''_1,S''_2)\in\mathbf{Comp}_\ell$. Hence part {\rm (iv)} holds.
    Now we turn to {\rm (iii)}.
    By Lemma \ref{lem:bw} , $\phi_\delta|_{\ker(\pi_{\ell-1;\delta}\circ\phi_\delta),\ker(\pi_{\ell-1;\delta})}$
    is the limit projection from $\ker(\pi_{\ell-1;\delta}\circ\phi_\delta)$ to $\ker(\pi_{\ell-1;\delta})$.
    Also by Theorem \ref{thm:limpullback} {\rm (ii)}, 
    $\pi_{\ell;{\bar{\delta}}}\circ\rho_{\bar{\delta}}|_{\ker(\pi_{\ell-1;\delta}\circ\pi_{\ell;\delta}\circ\rho_\delta),\ker(\pi_{\ell-1;\delta})}$
    is the limit projection from 
    $\ker(\pi_{\ell-1;\delta}\circ\pi_{\ell;\delta}\circ\rho_\delta)$
    to $\ker(\pi_{\ell-1;\delta})$.
    Then by the definition of $\eta_\delta$,
    we have the commutative diagram
    \[\xymatrix{
        \ker(\pi_{\ell-1;\delta}\circ\pi_{\ell;\delta}\circ\rho_\delta)\ar[rrr]^{\pi_{\ell;\delta}\circ\rho_\delta}\ar[d]^{\eta_\delta}&&&\ker(\pi_{\ell-1;\delta})\ar[d]^{\sigma^{{\bar{\delta}}-\delta}_{\ell-1}}\\
        \ker(\pi_{\ell-1;{\bar{\delta}}}\circ\phi_{\bar{\delta}})\ar[rrr]^{\phi_{\bar{\delta}}}&&&\ker(\pi_{\ell-1;{\bar{\delta}}})
    }\]
    This completes the proof.
\end{proof}

\begin{corollary}\label{cor:compnormalseries}
    Let $L_1$ and $L_2$ be groups with compatible normal series
    \[L_{0;1}:=1\trianglelefteq L_{1;1}\trianglelefteq \cdots\trianglelefteq L_{\ell;1}=:L_1\]
    and
    \[L_{0;2}:=1\trianglelefteq L_{1;2}\trianglelefteq \cdots\trianglelefteq L_{\ell;2}=:L_2.\]
    Let ${\bar{\delta}}:=3-\delta$.
    If for $2\le i\le \ell-1$, there exists an isomorphism $\sigma_i:L_{i;1}/L_{i-1;1}\rightarrow L_{i;2}/L_{i-1;2}$
    such that 
    \[(\sigma_i^{{\bar{\delta}}-\delta})_\bullet(\Inn(L_{\delta}/L_{i-1;\delta})^{L_{i;\delta}/L_{i-1;\delta}})\le \Aut(L_{\bar{\delta}}/L_{i-1;{\bar{\delta}}})^{L_{i;{\bar{\delta}}}/L_{i-1;{\bar{\delta}}}},\]
    then $L_1$ and $L_2$ are compatible.
\end{corollary}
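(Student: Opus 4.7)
The plan is to reduce the statement to Theorem \ref{thm:comp_l} by translating the normal series setup into the group sequence language. The bulk of the work in this paper is already done in Theorem \ref{thm:comp_l}; the corollary is essentially a re-indexing exercise.

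First, I would construct the corresponding group sequences. For $\delta \in \{1,2\}$ and $0 \le j \le \ell$, set $S_{j;\delta} := L_\delta/L_{\ell-j;\delta}$, and let $\pi_{j;\delta} : S_{j;\delta} \to S_{j-1;\delta}$ be the canonical projection induced by the inclusion $L_{\ell-j;\delta} \trianglelefteq L_{\ell-j+1;\delta}$. Each $\pi_{j;\delta}$ is surjective with $\ker(\pi_{j;\delta}) \cong L_{\ell-j+1;\delta}/L_{\ell-j;\delta}$. Since the normal series are compatible, these factors are pairwise isomorphic, so $\ker(\pi_{j;1}) \cong \ker(\pi_{j;2})$ for all $j$, making $S_\delta := ((S_{j;\delta}),(\pi_{j;\delta}))$ compatible surjective group sequences of length $\ell$ with $S_{\ell;\delta} = L_\delta$.

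Next, I would verify $(S_1, S_2) \in \mathbf{Comp}_\ell$. This is the heart of the translation. The substitution $j := \ell - i + 1$ is a bijection from $\{2,\ldots,\ell-1\}$ to itself, and under it $L_{i;\delta}/L_{i-1;\delta}$ corresponds to $\ker(\pi_{j;\delta})$ while $L_\delta/L_{i-1;\delta}$ corresponds to $S_{j;\delta}$. Thus each hypothesised isomorphism $\sigma_i : L_{i;1}/L_{i-1;1} \to L_{i;2}/L_{i-1;2}$ becomes an isomorphism $\tilde\sigma_j : \ker(\pi_{j;1}) \to \ker(\pi_{j;2})$, and the condition
\[(\sigma_i^{\bar\delta-\delta})_\bullet(\Inn(L_{\delta}/L_{i-1;\delta})^{L_{i;\delta}/L_{i-1;\delta}}) \le \Aut(L_{\bar\delta}/L_{i-1;\bar{\delta}})^{L_{i;\bar{\delta}}/L_{i-1;\bar{\delta}}}\]
is identical to the condition (\ref{eqn:addrestriction}) defining $\mathbf{Comp}_\ell$ with $\tilde\sigma_j$ in place of $\sigma_j$.

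Finally, I would apply Theorem \ref{thm:comp_l} to $(S_1, S_2) \in \mathbf{Comp}_\ell$ to conclude that $S_{\ell;1} = L_1$ and $S_{\ell;2} = L_2$ are compatible. The edge cases $\ell \le 2$, where the hypothesis on the $\sigma_i$ is vacuous, are handled separately: for $\ell \le 1$ the groups are isomorphic, and for $\ell = 2$ compatibility of the normal series together with Lemma \ref{lem:length2} already yields the conclusion. There is no substantive obstacle here; the main thing to be careful about is the direction of indexing (the group sequence $S_\delta$ ascends from the top quotient $L_\delta/L_{\ell-1;\delta}$ down to the whole group, while the normal series ascends the opposite way), which is precisely why the $i \leftrightarrow \ell-i+1$ swap produces a bijection rather than a shift.
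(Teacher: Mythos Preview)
Your proposal is correct and matches the paper's own proof essentially line for line: define $S_{j;\delta}=L_\delta/L_{\ell-j;\delta}$ with the natural projections, reindex the isomorphisms via $j\leftrightarrow \ell-j+1$ (the paper writes $\sigma'_j:=\sigma_{\ell-j+1}$), verify that this places $(S_1,S_2)$ in $\mathbf{Comp}_\ell$, and invoke Theorem~\ref{thm:comp_l}. Your explicit handling of the $\ell\le 2$ edge cases is a small addition, but not a departure.
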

\begin{proof}
    Let $S_{i;\delta}:=L_{\ell;\delta}/L_{\ell-i;\delta}$, let 
    $\pi_{i;\delta}:S_{i;\delta}\rightarrow S_{i-1;\delta}$ be given by 
    $xL_{\ell-i;\delta}\mapsto xL_{\ell-i+1;\delta}$ and let $S_\delta:=((S_{i;\delta}),(\pi_{i;\delta}))$.
    Since $L_{\ell-i;\delta}\le L_{\ell-i+1;\delta}$, we have that $\pi_{i;\delta}$ is well-defined.
    Observe that $\ker(\pi_{i;\delta})=L_{\ell-i+1;\delta}/L_{\ell-i;\delta}$.
    We have that $S_1$ and $S_2$ are compatible surjective group sequences.
    For $2\le i\le\ell-1$, 
    let $\sigma'_i:=\sigma_{\ell-i+1}:\ker(\pi_{i;1})\rightarrow \ker(\pi_{i;2})$.
    Then we have 
    \[(\sigma'_i)^{{\bar{\delta}}-\delta}_\bullet(\Inn(S_{i;\delta})^{\ker(\pi_{i;\delta})})\le \Aut(S_{i;{\bar{\delta}}})^{\ker(\pi_{i;{\bar{\delta}}})}.\]
    Therefore, $(S_1,S_2)\in\mathbf{Comp}_\ell$.
    By Theorem \ref{thm:comp_l}, this completes the proof.
\end{proof}

\begin{lemma}\label{lem:nilpotentcentral}
    Nilpotent groups of the same order have compatible central series.
\end{lemma}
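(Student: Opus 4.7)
My plan is to reduce the problem to the case of $p$-groups via the Sylow decomposition, then build a central series of each $p$-group with every factor of order $p$, and finally concatenate.

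First, recall that a finite group is nilpotent if and only if it is the direct product of its Sylow subgroups. Since $L_1$ and $L_2$ are nilpotent of the same order, they have the same set of prime divisors $\{p_1,\dots,p_k\}$ and, for each such prime $p_j$, their Sylow $p_j$-subgroups $P_{j;1}\le L_1$ and $P_{j;2}\le L_2$ satisfy $|P_{j;1}|=|P_{j;2}|=p_j^{a_j}$. Thus $L_\delta=P_{1;\delta}\times\cdots\times P_{k;\delta}$ for $\delta\in\{1,2\}$.

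Next, I would produce, for each $p$-group $P$ of order $p^a$, a central series
\[
1=Q_0\trianglelefteq Q_1\trianglelefteq\cdots\trianglelefteq Q_a=P
\]
with every factor $Q_i/Q_{i-1}\cong\mathbb{Z}_p$. This is standard: a nontrivial $p$-group has a nontrivial centre, so one inductively picks a central element of order $p$ in $P/Q_{i-1}$ and lets $Q_i/Q_{i-1}$ be the subgroup it generates; the resulting series is central because each $Q_i/Q_{i-1}\le Z(P/Q_{i-1})$. Apply this to each $P_{j;\delta}$ to obtain central series
\[
1=Q_{j,0;\delta}\trianglelefteq Q_{j,1;\delta}\trianglelefteq\cdots\trianglelefteq Q_{j,a_j;\delta}=P_{j;\delta}
\]
with all factors isomorphic to $\mathbb{Z}_{p_j}$.

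Finally, I would concatenate these through the direct product: set
\[
L_{j,i;\delta}:=P_{1;\delta}\times\cdots\times P_{j-1;\delta}\times Q_{j,i;\delta}\times 1\times\cdots\times 1,
\]
and form the series $1\trianglelefteq L_{1,1;\delta}\trianglelefteq\cdots\trianglelefteq L_{1,a_1;\delta}\trianglelefteq L_{2,1;\delta}\trianglelefteq\cdots\trianglelefteq L_{k,a_k;\delta}=L_\delta$. Because distinct Sylow factors of $L_\delta$ commute, this is in fact a central series of $L_\delta$: the commutator of $L_{j,i+1;\delta}$ with $L_\delta$ is contained in the product of $[Q_{j,i+1;\delta},P_{j;\delta}]$ with the already-present Sylow factors, which lies in $L_{j,i;\delta}$. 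Since the factors are $\mathbb{Z}_{p_1},\dots,\mathbb{Z}_{p_1},\mathbb{Z}_{p_2},\dots,\mathbb{Z}_{p_k},\dots,\mathbb{Z}_{p_k}$ for both $L_1$ and $L_2$ in exactly the same order, the two central series are compatible. There is no real obstacle here; the only small thing to verify carefully is that concatenating central series of the Sylow factors across the direct product produces a genuine central series of $L_\delta$, and this is immediate from $[P_{j;\delta},P_{j';\delta}]=1$ for $j\ne j'$.
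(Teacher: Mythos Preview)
Your proof is correct. Both your argument and the paper's rest on the same basic fact (a nontrivial nilpotent group has a central element of prime order), but you organize the construction differently. The paper proceeds by a single induction on $|L_1|$: at each step it picks any prime $p\mid|L_\delta|$, takes a central subgroup of order $p$ in each $L_\delta$, quotients, and applies the inductive hypothesis to the quotients---no Sylow decomposition is invoked. Your route first splits each $L_\delta$ into its Sylow factors, refines each Sylow $p$-subgroup to a central series with $\mathbb{Z}_p$ factors, and then concatenates across the direct product. Your approach makes the resulting series more explicit (factors grouped prime by prime), at the cost of the extra bookkeeping of verifying that the concatenation across direct factors is again central; the paper's induction avoids that verification entirely but is less informative about the shape of the series produced.
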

\begin{proof}
    Let $L_1$ and $L_2$ be nilpotent groups of the same order.
    We use induction on $|L_1|$.
    Let $p$ be a prime such that $p\mid |L_1|$.
    It is known that if $G$ is nilpotent and $p\mid |G|$, then $p\mid |\operatorname{Z}(G)|$ (see \cite[Exercise 5.2.1]{robinson1996course}).
    Then by Cauchy's lemma, $L_{\delta}$ has a central subgroup $L_{1;\delta}$ of order $p$.
    Let $\pi_\delta:L_\delta\rightarrow L_\delta/L_{1;\delta}$ be the natural projection.
    By induction, there exist compatible central series,
    $1=M_{0;1}\trianglelefteq M_{1;1}\trianglelefteq \cdots\trianglelefteq M_{\ell;1}=L_1/L_{1;1}$
    and $1=M_{0;2}\trianglelefteq M_{1;2}\trianglelefteq \cdots\trianglelefteq M_{\ell;2}=L_2/L_{1;2}$.
    For every $i\in\{1,\ldots,\ell\}$, let $L_{i+1;\delta}$ be the full preimage of $M_{i;\delta}$ with respect to $\pi_\delta$.
    Therefore, $L_1$ and $L_2$ have central series:
    \[1=L_{0;1}\trianglelefteq L_{1;1}\trianglelefteq \cdots\trianglelefteq L_{\ell+1;1}=L_{1}\]
    and
    \[1=L_{0;2}\trianglelefteq L_{1;2}\trianglelefteq \cdots\trianglelefteq L_{\ell+1;2}=L_{2}.\]
    Note that $L_{1;1}\cong \mathbb{Z}_p\cong L_{1;2}$ 
    and for every $i\in\{2,\ldots,\ell+1\}$,
    \[L_{i;1}/L_{i-1;1}\cong M_{i-1;1}/M_{i-2;1}\cong M_{i-1;2}/M_{i-2;2}\cong L_{i;2}/L_{i-1;2}.\]
    Hence the central series above are compatible.
    This completes the proof.
\end{proof}

\begin{corollary}
    \label{cor:nilpotent}
    Let $L_1$ and $L_2$ be groups of the same order.
    If there exist $N_1\trianglelefteq L_1$ and $N_2\trianglelefteq L_2$
    such that $N_1\cong N_2$, and both $L_1/N_1$ and $L_2/N_2$ are nilpotent,
    then $L_1$ and $L_2$ are compatible. In particular, nilpotent groups of the same order are compatible.
\end{corollary}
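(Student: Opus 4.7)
The plan is to assemble Lemma~\ref{lem:nilpotentcentral} and Corollary~\ref{cor:compnormalseries}. The guiding observation is that in any central series, each factor $L_{i;\delta}/L_{i-1;\delta}$ is central in $L_\delta/L_{i-1;\delta}$, so every inner automorphism of $L_\delta/L_{i-1;\delta}$ restricts trivially to it. This forces $\Inn(L_\delta/L_{i-1;\delta})^{L_{i;\delta}/L_{i-1;\delta}}$ to be trivial, and hence the technical hypothesis of Corollary~\ref{cor:compnormalseries} holds vacuously, no matter how the isomorphism $\sigma_i$ is chosen.

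First I would construct the normal series. Since $|L_1|=|L_2|$ and $|N_1|=|N_2|$, the nilpotent quotients $L_\delta/N_\delta$ have equal order, so Lemma~\ref{lem:nilpotentcentral} supplies compatible central series in $L_1/N_1$ and $L_2/N_2$. Pulling these back through the projections $L_\delta \to L_\delta/N_\delta$ and prepending $1 \trianglelefteq N_\delta$ at the bottom yields, for each $\delta \in \{1,2\}$, a normal series
$$1 = L_{0;\delta} \trianglelefteq L_{1;\delta} \trianglelefteq L_{2;\delta} \trianglelefteq \cdots \trianglelefteq L_{\ell;\delta} = L_\delta$$
with $L_{1;\delta} = N_\delta$. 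These two series are compatible: the bottom factor is $N_1 \cong N_2$, and the remaining factors are precisely those of the compatible central series on $L_\delta/N_\delta$.

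Next I would verify the hypothesis of Corollary~\ref{cor:compnormalseries}. For each $2 \le i \le \ell-1$, the third isomorphism theorem gives $L_\delta/L_{i-1;\delta} \cong (L_\delta/N_\delta)/(L_{i-1;\delta}/N_\delta)$, and under this identification $L_{i;\delta}/L_{i-1;\delta}$ corresponds to a factor of the chosen central series of $L_\delta/N_\delta$. That factor is central in the ambient quotient, so $\Inn(L_\delta/L_{i-1;\delta})^{L_{i;\delta}/L_{i-1;\delta}}$ is the trivial subgroup of $\Aut(L_{i;\delta}/L_{i-1;\delta})$. Choosing any isomorphism $\sigma_i: L_{i;1}/L_{i-1;1} \to L_{i;2}/L_{i-1;2}$, the inclusion demanded by Corollary~\ref{cor:compnormalseries} then holds automatically, and that corollary yields the compatibility of $L_1$ and $L_2$. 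The ``in particular'' clause follows by specialising to $N_1 = N_2 = 1$. I do not anticipate any substantive obstacle; the only point to check carefully is the degenerate range $\ell \le 2$, where the index set $\{2,\dots,\ell-1\}$ is empty and the hypothesis of Corollary~\ref{cor:compnormalseries} is vacuously satisfied.
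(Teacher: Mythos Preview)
Your proposal is correct and follows essentially the same approach as the paper: invoke Lemma~\ref{lem:nilpotentcentral} to obtain compatible central series on the nilpotent quotients, pull them back through the projections $L_\delta\to L_\delta/N_\delta$ to build compatible normal series of $L_1$ and $L_2$ with bottom term $N_\delta$, observe that centrality forces $\Inn(L_\delta/L_{i-1;\delta})^{L_{i;\delta}/L_{i-1;\delta}}=1$ for the relevant range of $i$, and conclude via Corollary~\ref{cor:compnormalseries}. Your explicit remark on the degenerate range $\ell\le 2$ is a nice touch that the paper leaves implicit.
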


\begin{proof}
    Let $L_{1;1}:=N_1$ and let $L_{1;2}:=N_2$.
    Since $L_1/N_1$ and $L_2/N_2$ are nilpotent groups of the same order, 
    then by Lemma \ref{lem:nilpotentcentral},
    there exist compatible central series,
    $1=M_{0;1}\trianglelefteq M_{1;1}\trianglelefteq \cdots\trianglelefteq M_{\ell;1}=L_1/N_1$
    and
    $1=M_{0;2}\trianglelefteq M_{1;2}\triangleleft \cdots \trianglelefteq M_{\ell;2}=L_2/N_2$.
    Let $\pi_\delta:L_\delta\rightarrow L_\delta/N_\delta$ be the natural projection.
    For every $i\in \{1,\ldots,n\}$, let $L_{i+1;\delta}$ be the full preimage
    of $M_{i;\delta}$ with respect to $\pi_\delta$.
    Therefore, there exist normal series of $L_1$ and $L_2$:
    \[1=L_{0;1}\trianglelefteq L_{1;1}\trianglelefteq \cdots\trianglelefteq L_{\ell+1;1}=:L_1\]
    and
    \[1=L_{0;2}\trianglelefteq L_{1;2}\trianglelefteq \cdots\trianglelefteq L_{\ell+1;2}=:L_2.\]
    Note that $L_{1;1}=N_1\cong N_2=L_{1;2}$ and for every $i\in\{2,\ldots,\ell+1\}$,
    \[L_{i;1}/L_{i-1;1}\cong M_{i-1;1}/M_{i-2;1}\cong M_{i-1;2}/M_{i-2;2}\cong L_{i;2}/L_{i-1;2}.\]
    Hence the normal series above are compatible.
    Also note that \[\Inn(L_\delta/L_{i-1;\delta})^{L_{i;\delta}/L_{i-1;\delta}}=1\]
    for $2\le i\le \ell$.
    By Corollary \ref{cor:compnormalseries}, we deduce that $L_1$ and $L_2$ are compatible.
\end{proof}

\begin{lemma}\label{lem:abelianautomorphism}
    Let $P$ and $Q$ be groups,
    let $A:=\Aut(P)$, let $\varphi:Q\rightarrow A$ be a homomorphism and let $G:=P\rtimes_\varphi Q$ be the semi-direct product of $P$ by $Q$.
    Then $\operatorname{C}_A(\varphi(Q))\le\Aut(G)^P$. 
    In particular, if $A$ is abelian, then $\Aut(G)^P=A$.
\end{lemma}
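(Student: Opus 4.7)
The plan is to exhibit an explicit extension of each $\alpha \in \operatorname{C}_A(\varphi(Q))$ to an automorphism of $G$ that preserves $P$ setwise and restricts to $\alpha$ on $P$. Given such an $\alpha$, I would define $\sigma:G\to G$ by
\[\sigma(pq) := \alpha(p)q \quad\text{for all } p\in P,\ q\in Q,\]
using the fact that every element of $G=P\rtimes_\varphi Q$ has a unique expression of the form $pq$. The bijectivity of $\sigma$ is immediate since $\alpha$ is bijective on $P$ and $\sigma$ acts as the identity on the $Q$-coordinate; its inverse is given by the same formula with $\alpha^{-1}$ in place of $\alpha$. Clearly $\sigma(P)=P$ and $\sigma|_{P,P}=\alpha$, so once $\sigma$ is shown to be a homomorphism it lies in $\Aut(G)_P$ and witnesses $\alpha\in\Aut(G)^P$.

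The key calculation is to check the homomorphism property. Unwinding $(p_1q_1)(p_2q_2)=p_1\varphi(q_1)(p_2)\,q_1q_2$, one has
\[\sigma\bigl((p_1q_1)(p_2q_2)\bigr)=\alpha(p_1)\,\alpha(\varphi(q_1)(p_2))\,q_1q_2,\]
whereas
\[\sigma(p_1q_1)\sigma(p_2q_2)=\alpha(p_1)q_1\alpha(p_2)q_2=\alpha(p_1)\,\varphi(q_1)(\alpha(p_2))\,q_1q_2.\]
Equality of these two expressions reduces to $\alpha\circ\varphi(q_1)=\varphi(q_1)\circ\alpha$ for all $q_1\in Q$, which is precisely the hypothesis $\alpha\in\operatorname{C}_A(\varphi(Q))$. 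This establishes the inclusion $\operatorname{C}_A(\varphi(Q))\le\Aut(G)^P$.

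For the \emph{in particular} clause, if $A$ is abelian then $\operatorname{C}_A(\varphi(Q))=A$, so the containment just proved gives $A\le\Aut(G)^P$. The reverse inclusion $\Aut(G)^P\le A$ is automatic from the definition: any $\sigma\in\Aut(G)_P$ restricts to an automorphism of $P$, so $\sigma^P\in\Aut(P)=A$. Combining the two inclusions yields $\Aut(G)^P=A$.

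There is no real obstacle here; the only point requiring care is the centralising identity in the homomorphism verification, and that is exactly what the hypothesis supplies. The construction is essentially the standard way of lifting an automorphism of $P$ to a semidirect product, with the centralising condition being the precise obstruction to this lifting in general.
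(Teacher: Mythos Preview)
Your proof is correct and follows essentially the same approach as the paper: both define the extension of $\alpha$ to $G$ by acting via $\alpha$ on the $P$-coordinate and trivially on the $Q$-coordinate, verify the homomorphism property using precisely the centralising hypothesis, and deduce the abelian case from the trivial reverse inclusion $\Aut(G)^P\le A$. The only difference is notational (you write elements as $pq$, the paper as pairs $(p,q)$).
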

\begin{proof}
    Let $\sigma\in\operatorname{C}_A(\varphi(Q))$.
    We define a map $\tilde{\sigma}:G\rightarrow G$ by 
    \[\tilde{\sigma}((p,q)):=(\sigma(p),q).\]
    Let $(p_1,q_1),(p_2,q_2)\in G$.
    We have
    \[\begin{aligned}
        \tilde{\sigma}((p_1,q_1)(p_2,q_2))&=\tilde{\sigma}((p_1p_2^{q_1^{-1}},q_1q_2))\\
        &=(\sigma(p_1p_2^{q_1^{-1}}),q_1q_2)\\
        &=(\sigma(p_1)\sigma(p_2^{q_1^{-1}}),q_1q_2)
    \end{aligned}
    \]
    Since $\sigma\in\operatorname{C}_A(\varphi(Q))$, we have $\sigma(p_2^{q_1^{-1}})=(\sigma(p_2))^{q_1^{-1}}$.
    Therefore,
    \[\begin{aligned}
        \tilde{\sigma}((p_1,q_1)(p_2,q_2))&=(\sigma(p_1)\sigma(p_2)^{q_1^{-1}},q_1q_2)\\
        &=(\sigma(p_1),q_1)(\sigma(p_2),q_2)\\
        &=\tilde{\sigma}((p_1,q_1))\tilde{\sigma}((p_2,q_2)).
    \end{aligned}\]
    Hence $\tilde{\sigma}$ is a homomorphism.
    It is clear that $\tilde{\sigma}$ is bijective so $\tilde{\sigma}\in\Aut(G)$.
    Also note that $P$ is invariant under $\tilde{\sigma}$ and $\tilde{\sigma}^{P}=\sigma$.
    Therefore, we deduce that $\operatorname{C}_A(\varphi(Q))\le\Aut(G)^P$.
    In the case where $A$ is abelian, we have $\operatorname{C}_A(\varphi(Q))=A$.
    This implies that $A\le\Aut(G)^P$. It is clear that $\Aut(G)^P\le A$. Therefore $\Aut(G)^P=A$.
\end{proof}

\begin{corollary}\label{cor:abelianautomorphism}
    Let $P_1,\ldots,P_\ell$ be groups with abelian automorphism groups.
    Let $L_1$ and $L_2$ be groups of the same order.
    If there exist $N_1\trianglelefteq L_1$ and $N_2\trianglelefteq L_2$
    such that $N_1\cong N_2$
    and
    \[L_1/N_1\cong P_1\rtimes_{\phi_{1;1}}(P_2\rtimes_{\phi_{2;1}} (P_3\rtimes_{\phi_{3;1}}(\cdots(P_{\ell-1}\rtimes_{\phi_{\ell-1;1}} P_\ell))\cdots))\]
    and
    \[L_2/N_2\cong P_1\rtimes_{\phi_{1;2}}(P_2\rtimes_{\phi_{2;2}} (P_3\rtimes_{\phi_{3;2}}(\cdots(P_{\ell-1}\rtimes_{\phi_{\ell-1;2}} P_\ell))\cdots)),\]
    then $L_1$ and $L_2$ are compatible.
\end{corollary}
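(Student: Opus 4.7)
The plan is to build compatible normal series for $L_1$ and $L_2$ from the iterated semidirect product structure, then apply Corollary~\ref{cor:compnormalseries}, with Lemma~\ref{lem:abelianautomorphism} collapsing the delicate automorphism-compatibility condition to a triviality.

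First, for each $\delta\in\{1,2\}$ we set $Q_\delta := P_1\rtimes_{\phi_{1;\delta}}(P_2\rtimes_{\phi_{2;\delta}}(\cdots\rtimes_{\phi_{\ell-1;\delta}} P_\ell)\cdots)$ and let $\pi_\delta\colon L_\delta\to L_\delta/N_\delta\cong Q_\delta$ be the natural projection. The nested semidirect product structure supplies a chain of normal subgroups $H_{j;\delta}\trianglelefteq Q_\delta$ corresponding to the ``outer bases'' $P_1\rtimes\cdots\rtimes P_j$, with $H_{j;\delta}/H_{j-1;\delta}\cong P_j$ and $Q_\delta/H_{j;\delta}\cong P_{j+1}\rtimes\cdots\rtimes P_\ell$. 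Pulling these back via $\pi_\delta$ and prepending $N_\delta$, we obtain a normal series
\[1=L_{0;\delta}\trianglelefteq L_{1;\delta}\trianglelefteq\cdots\trianglelefteq L_{\ell+1;\delta}=L_\delta\]
with $L_{1;\delta}=N_\delta$ and $L_{j;\delta}/L_{j-1;\delta}\cong P_{j-1}$ for $j\ge 2$. Since $N_1\cong N_2$ and the remaining factors agree, these are compatible normal series of length $\ell+1$.

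Next, to invoke Corollary~\ref{cor:compnormalseries} we need an isomorphism $\sigma_i\colon L_{i;1}/L_{i-1;1}\to L_{i;2}/L_{i-1;2}$ for each $2\le i\le\ell$ satisfying the stated inclusion of restricted-automorphism groups. The key structural observation is that $L_\delta/L_{i-1;\delta}$ corresponds under $\pi_\delta$ to $Q_\delta/H_{i-2;\delta}$, which is isomorphic to the semidirect product
\[P_{i-1}\rtimes_{\phi_{i-1;\delta}}(P_i\rtimes_{\phi_{i;\delta}}(\cdots\rtimes P_\ell)\cdots),\]
with normal factor $L_{i;\delta}/L_{i-1;\delta}\cong P_{i-1}$. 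Because $\Aut(P_{i-1})$ is abelian, Lemma~\ref{lem:abelianautomorphism} gives
\[\Aut\bigl(L_\delta/L_{i-1;\delta}\bigr)^{L_{i;\delta}/L_{i-1;\delta}}=\Aut(P_{i-1}).\]
Fixing compatible identifications $L_{i;\delta}/L_{i-1;\delta}\cong P_{i-1}$ for both $\delta$, we let $\sigma_i$ be the induced composite isomorphism. Then $(\sigma_i^{\bar{\delta}-\delta})_\bullet$ is an isomorphism from $\Aut(L_{i;\delta}/L_{i-1;\delta})$ onto $\Aut(L_{i;\bar{\delta}}/L_{i-1;\bar{\delta}})$, and by the display above both sides of the required inclusion equal the full group $\Aut(P_{i-1})$, so the condition holds automatically.

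The main (and essentially only substantive) step is the structural identification of the successive quotients $L_\delta/L_{i-1;\delta}$ as semidirect products with normal factor $P_{i-1}$, which follows immediately from the nested semidirect product decomposition of $Q_\delta$. Once this is in hand, Lemma~\ref{lem:abelianautomorphism} collapses the restricted-automorphism subgroup to the full $\Aut(P_{i-1})$, rendering the otherwise delicate hypothesis of Corollary~\ref{cor:compnormalseries} vacuous; the remaining work is careful index bookkeeping rather than any genuine difficulty, and Corollary~\ref{cor:compnormalseries} then yields the compatibility of $L_1$ and $L_2$.
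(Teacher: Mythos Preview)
Your proposal is correct and follows essentially the same route as the paper: build compatible normal series of length $\ell+1$ by pulling back the nested semidirect product structure through $\pi_\delta$, then invoke Lemma~\ref{lem:abelianautomorphism} to show the right-hand side of the inclusion in Corollary~\ref{cor:compnormalseries} is all of $\Aut(P_{i-1})$, rendering the hypothesis automatic. One small overstatement: you write that ``both sides of the required inclusion equal the full group $\Aut(P_{i-1})$,'' but the left side $(\sigma_i^{\bar\delta-\delta})_\bullet(\Inn(L_\delta/L_{i-1;\delta})^{L_{i;\delta}/L_{i-1;\delta}})$ is generally only a subgroup of $\Aut(P_{i-1})$; the inclusion holds because the right side is all of $\Aut(P_{i-1})$, which is what the paper observes.
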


\begin{proof}
    Let $L_{1;1}:=N_1$ and let $L_{1;2}:=N_2$.
    Let $\pi_\delta:L_{\delta}\rightarrow L_\delta/L_{1;\delta}$.
    Let $L_{2;\delta}$ be the full preimage of $P_1$ with respect to $\pi_\delta$.
    For every $2\le i\le \ell+1$, 
    let $L_{i;\delta}$ be the full preimage of \[P_1\rtimes_{\phi_{1;\delta}}(P_2\rtimes_{\phi_{2;\delta}}(\cdots\rtimes(P_{i-1}\rtimes P_{i}))\cdots).\]
    Therefore, we have compatible normal series
    \[L_{0;1}:=1\trianglelefteq L_{1;1}\trianglelefteq \cdots\trianglelefteq L_{\ell+1;1}=:L_1\]
    and
    \[L_{0;2}:=1\trianglelefteq L_{1;2}\trianglelefteq \cdots\trianglelefteq L_{\ell+1;2}=:L_2.\]
    For $1\le i\le\ell+1$, let $\sigma_i:L_{i;1}/L_{i-1;1}\rightarrow L_{i;2}/L_{i-1;2}$
    be an isomorphism.
    Note that for every $2\le i\le\ell$,
    \[(\sigma_i^{{\bar{\delta}}-\delta})_\bullet(\Inn(L_\delta/L_{i-1;\delta})^{L_{i;\delta}/L_{i-1;\delta}})\le\Aut(L_{i;{\bar{\delta}}}/L_{i-1;{\bar{\delta}}}).\]
    By Lemma \ref{lem:abelianautomorphism}, 
    \[\Aut(L_{{\bar{\delta}}}/L_{i-1;{\bar{\delta}}})^{L_{i;{\bar{\delta}}}/L_{i-1;{\bar{\delta}}}}=\Aut(L_{i;{\bar{\delta}}}/L_{i-1;{\bar{\delta}}}).\]
    Then by Corollary \ref{cor:compnormalseries}, $L_1$ and $L_2$ are compatible.
\end{proof}

We are almost ready to prove that groups of the same square-free order are compatible. First, we need the following lemma about these groups. 
This lemma is well known (see, for example, \cite[10.1.10]{robinson1996course}).

\begin{lemma}\label{lem:square-free}
    Let $G$ be a group of square-free order.
    If $p$ is the largest prime dividing $|G|$, then $G$ has a normal Sylow p-subgroup.
\end{lemma}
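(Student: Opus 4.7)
The plan is to proceed by strong induction on $|G|$. The base case $|G|=1$ is trivial, so assume the result for all groups of smaller square-free order, let $p$ be the largest prime dividing $|G|$, and suppose for contradiction that $G$ has no normal Sylow $p$-subgroup. My first reduction will be to rule out the existence of any proper nontrivial normal subgroup $N$ of $G$ with $p\mid|N|$: in that case, since $|N|$ is square-free and $p$ is still its largest prime divisor, induction would give a normal Sylow $p$-subgroup of $N$, which is characteristic in $N$ (being the unique Sylow $p$-subgroup) and hence normal in $G$; since $p\mid|N|$ and $|G|$ is square-free, a Sylow $p$-subgroup of $N$ is also a Sylow $p$-subgroup of $G$, contradicting our assumption.

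The second reduction handles proper nontrivial normal $N\trianglelefteq G$ with $p\nmid|N|$: then $p\mid|G/N|$ and $p$ is the largest prime dividing $|G/N|$, so induction gives a normal Sylow $p$-subgroup $\bar P\trianglelefteq G/N$. Its preimage $H\trianglelefteq G$ has order $p|N|$ with $p$ still the largest prime divisor. If $H<G$, induction applied to $H$ again produces a normal Sylow $p$-subgroup of $H$ which is characteristic in $H$, hence normal in $G$; contradiction. So $H=G$, forcing $[G:N]=p$. Combining both reductions: either $G$ is simple, or every proper nontrivial normal subgroup of $G$ has index exactly $p$.

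The simple case is immediate: a simple group of square-free order must be cyclic of prime order (e.g.\ by Burnside's $p^aq^b$ theorem, or directly from Sylow applied to the smallest prime divisor), and then $G$ itself is its own normal Sylow $p$-subgroup. So we may assume there is $N\trianglelefteq G$ with $[G:N]=p$. Applying induction to $N$ (with largest prime divisor $q\le p$, but in fact $q<p$ since $p\nmid|N|$), $N$ has a normal Sylow $q$-subgroup $Q$ where $q$ is the largest prime dividing $|N|$. Then $Q$ is characteristic in $N$, hence normal in $G$, hence a proper nontrivial normal subgroup of $G$, so $[G:Q]=p$, i.e., $Q=N$. Thus $N$ is a $q$-group of square-free order, so $|N|=q$ and $|G|=pq$ with $p>q$. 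But then Sylow's theorem directly forces $n_p\mid q$ and $n_p\equiv 1\pmod p$, giving $n_p=1$; this contradicts our standing assumption and completes the proof.

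The main technical point will be carefully tracking which inductive hypothesis to invoke in each reduction and verifying that the Sylow $p$-subgroup found inductively remains a Sylow $p$-subgroup after transferring to $G$; this is where the square-freeness of $|G|$ is crucial, since it ensures $p$ appears to exactly the first power and hence Sylow $p$-subgroups in normal subgroups or preimages match up cleanly. No genuinely difficult step is expected beyond bookkeeping.
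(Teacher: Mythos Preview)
The paper does not prove this lemma; it merely cites Robinson's textbook and moves on. Your inductive argument is correct and is one of the standard routes to this well-known fact, so there is no meaningful comparison of approaches to make.

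One minor imprecision: in the simple case you invoke ``Burnside's $p^aq^b$ theorem'', but that result only covers groups whose order has at most two prime divisors and does not directly handle an arbitrary square-free order. What is actually needed is that every group of square-free order is solvable, and the usual proof of \emph{that} uses Burnside's normal $p$-complement (transfer) theorem at the smallest prime $q$: a Sylow $q$-subgroup has order $q$, and the $N/C$ lemma together with $\gcd(|G|,q-1)=1$ forces it to be central in its normalizer, yielding a normal $q$-complement. Your alternative phrase ``directly from Sylow applied to the smallest prime divisor'' points in the right direction but elides the transfer step. This is a naming and precision issue rather than a logical gap; the rest of your reduction is clean.
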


\begin{corollary}\label{cor:square-free}
    Groups of the same square-free order are compatible with each other.
\end{corollary}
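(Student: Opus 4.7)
The plan is to reduce Corollary~\ref{cor:square-free} to Corollary~\ref{cor:abelianautomorphism} by establishing that every group of square-free order admits an iterated semidirect product decomposition whose factors are cyclic of prime order, with the same sequence of primes depending only on the order.

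First, I would prove by induction on the number of prime factors that if $L$ is a group of square-free order $p_1 p_2 \cdots p_\ell$ with $p_1 > p_2 > \cdots > p_\ell$ distinct primes, then
\[
L \cong \mathbb{Z}_{p_1} \rtimes_{\phi_1} \bigl( \mathbb{Z}_{p_2} \rtimes_{\phi_2} \bigl( \cdots \rtimes_{\phi_{\ell-1}} \mathbb{Z}_{p_\ell} \bigr) \cdots \bigr)
\]
for suitable homomorphisms $\phi_i$. The base case $\ell = 1$ is trivial. For the inductive step, Lemma~\ref{lem:square-free} supplies a normal Sylow $p_1$-subgroup $P \trianglelefteq L$, and since $|P|=p_1$ we have $P \cong \mathbb{Z}_{p_1}$. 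Because $\gcd(p_1, p_2 \cdots p_\ell)=1$, the Schur-Zassenhaus theorem provides a complement $Q \le L$ of $P$, so $L \cong P \rtimes Q$. The group $Q$ has square-free order $p_2 \cdots p_\ell$, and the induction hypothesis yields the claimed decomposition of $Q$, which in turn yields the claimed decomposition of $L$.

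Now let $L_1$ and $L_2$ be any two groups of the same square-free order $n$, and write $n = p_1 p_2 \cdots p_\ell$ with $p_1 > p_2 > \cdots > p_\ell$. Applying the inductive result above to each of $L_1$ and $L_2$ yields decompositions of the form required by Corollary~\ref{cor:abelianautomorphism}, where the common factors $P_i = \mathbb{Z}_{p_i}$ are cyclic of prime order. Since $\Aut(\mathbb{Z}_{p_i})$ is cyclic (indeed of order $p_i-1$), each $\Aut(P_i)$ is abelian. Taking $N_1 = N_2 = 1$ (which are trivially isomorphic and normal), all hypotheses of Corollary~\ref{cor:abelianautomorphism} are satisfied, so $L_1$ and $L_2$ are compatible.

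There is no real obstacle here: once the structure theorem for groups of square-free order (a classical consequence of Lemma~\ref{lem:square-free} combined with Schur-Zassenhaus) is in hand, Corollary~\ref{cor:abelianautomorphism} applies directly. The substantive compatibility work has already been packaged into the earlier corollary, and the only content of this proof is the standard observation that the automorphism groups of prime-order cyclic groups are abelian.
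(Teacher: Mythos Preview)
Your proposal is correct and follows essentially the same route as the paper's proof: use Lemma~\ref{lem:square-free} together with Schur--Zassenhaus inductively to write each group as an iterated semidirect product of prime-order cyclic groups (in decreasing order of primes), note that $\Aut(\mathbb{Z}_{p_i})$ is abelian, and invoke Corollary~\ref{cor:abelianautomorphism} with trivial $N_\delta$. The paper states the induction more tersely, but the argument is the same.
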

\begin{proof}
    Let $L_1$ and $L_2$ be groups such that
    $|L_1|=|L_2|=p_1p_2\cdots p_\ell$ where $p_1> p_2>\cdots> p_\ell$ are primes.
    Using Lemma \ref{lem:square-free}, the Schur-Zassenhaus Theorem and 
    applying induction,
    we get that both $L_1$ and $L_2$ are of form 
    $\mathbb{Z}_{p_1}\rtimes(\mathbb{Z}_{p_2}\rtimes(\cdots\rtimes(\mathbb{Z}_{p_{\ell-1}}\rtimes \mathbb{Z}_{p_{\ell}}))\cdots)$.
    Note that cyclic groups have abelian automorphism groups.
    Then by Corollary \ref{cor:abelianautomorphism}, $L_1$ and $L_2$ are compatible.
\end{proof}

To show the limits of our approach, we end the paper with an example of two groups that have compatible normal series but none of these series satisfy the hypothesis of Corollary \ref{cor:compnormalseries}.

\begin{example}\label{ex:counterexample}
The alternating group $A_5$ has a natural transitive action on a set $\Omega_1$ of size $5$. It also admits a transitive action on a set $\Omega_2$ of size $6$.
Let $L_1'=(A_5\wr_{\Omega_1} A_5)\times A_5$ and let $L_2'=A_5\wr_{\Omega_2} A_5$.
Note that $A_5$ has an irreducible representation of degree $4$ over $\mathbb{F}_7$, so we can view $A_5$ as an irreducible subgroup of $\GL(4,7)$. This induces a natural irreducible representation of degree $4\cdot 5$ of $A_5\wr_{\Omega_1}A_5$ which in turn induces a faithful representation $\varphi_1:L_1'\to \GL(4\cdot 6,7)$. Similarly, we also get a faithful irreducible representation $\varphi_2:L_2'\to \GL(4\cdot 6,7)$.
For $i\in\{1,2\}$, let $L_i:=\mathbb{Z}_7^{4\cdot 6}\rtimes_{\varphi_i}L_i'$.
Then $L_1$ and $L_2$ have a unique compatible normal series (with nontrivial factors), namely:
\[1\trianglelefteq \mathbb{Z}_7^{4\cdot 6}\trianglelefteq \mathbb{Z}_7^{4\cdot 6}\rtimes_{\varphi_1}A_5^6\trianglelefteq L_1\]
and
\[1\trianglelefteq \mathbb{Z}_7^{4\cdot 6}\trianglelefteq \mathbb{Z}_7^{4\cdot 6}\rtimes_{\varphi_2}A_5^6\trianglelefteq L_2.\]

Now, the hypothesis of Corollary~\ref{cor:compnormalseries} applied with $\ell=3$, $i=2$ and $\delta=2$ is that
\[\Inn(L_{2}/L_{1;2})^{L_{2;2}/L_{1;2}}\le \Aut(L_{1}/L_{1;1})^{L_{2;1}/L_{1;1}},\]
where the isomorphism $\sigma_2$ between the copies of $A_5^6$ in $L_1'$ and $L_2'$ is implicit. In other words, 
\[\Inn(L_2')^{A_5^6}\le \Aut(L_1')^{A_5^6}.\]
Note that $\Aut(L_1')=\Aut(A_5\wr_{\Omega_1} A_5)\times\Aut(A_5)$ so $\Aut(L_1')^{A_5^6}$ fixes one of the six $A_5$ factors of $A_5^6$.
On the other hand, $\Inn(L_2')^{A_5^6}$ is transitive on the six $A_5$ factors, so $\Inn(L_2')^{A_5^6}\nleq \Aut(L_1')^{A_5^6}$. Therefore, our methods cannot be applied to determine the compatibility of $L_1$ and $L_2$.
\end{example}

\section*{Acknowledgements}
This paper is based on the first author's PhD's thesis at the University of Auckland, which was financially supported by the Marsden Fund
(via grant 18-UOA-197) and the University of Auckland. 

\bibliographystyle{plain}
\bibliography{bib}

\begin{thebibliography}{10}

\bibitem{cameron1972Permutation}
P.~J. Cameron.
\newblock Permutation groups with multiply transitive suborbits.
\newblock {\em Proceedings of the London Mathematical Society}, 3:427--440, 1972.

\bibitem{ding2025necessaryconditionscompatibilitygroups}
Z.~Ding and G.~Verret.
\newblock Some necessary conditions for compatibility of groups.
\newblock \url{https://arxiv.org/abs/2501.08529}.

\bibitem{dixon1996Permutation}
J.~D. Dixon and B.~Mortimer.
\newblock {\em Permutation {{Groups}}}, volume 163 of {\em Graduate {{Texts}} in {{Mathematics}}}.
\newblock Springer New York, New York, NY, 1996.

\bibitem{giudici2019Arctransitive}
M.~Giudici, S.P. Glasby, C.~H. Li, and G.~Verret.
\newblock Arc-transitive digraphs with quasiprimitive local actions.
\newblock {\em Journal of Pure and Applied Algebra}, 223(3):1217--1226, 2019.

\bibitem{khukhro2024unsolvedproblemsgrouptheory}
E.~I. Khukhro and V.~D. Mazurov.
\newblock Unsolved problems in group theory. the kourovka notebook, 2024.

\bibitem{knapp1973point}
W.~Knapp.
\newblock On the point stabilizer in a primitive permutation group.
\newblock {\em Mathematische Zeitschrift}, 133(2):137--168, 1973.

\bibitem{Leinster_2014}
T.~Leinster.
\newblock {\em Basic Category Theory}.
\newblock Cambridge Studies in Advanced Mathematics. Cambridge University Press, 2014.

\bibitem{quirin1971Primitive}
W.~Quirin.
\newblock Primitive {{Permutation Groups}} with {{Small Orbitals}}.
\newblock {\em Mathematische Zeitschrift}, 122:267--274, 1971.

\bibitem{robinson1996course}
D.J.S. Robinson.
\newblock {\em A Course in the Theory of Groups}.
\newblock Number~80 in Graduate Texts in Mathematics. Springer-Verlag, New York, 2nd ed edition, 1996.

\bibitem{schroder2016Ordered}
B.~Schr{\"o}der.
\newblock {\em Ordered {{Sets}}}.
\newblock Springer International Publishing, Cham, 2016.

\bibitem{groupoverflow}
Daniel Sebald.
\newblock Pair of short exact sequences of groups.
\newblock MathOverflow.
\newblock \url{https://mathoverflow.net/q/425920} (version: 2022-07-03).

\bibitem{sims1967Graphs}
C.~C. Sims.
\newblock Graphs and finite permutation groups.
\newblock {\em Mathematische Zeitschrift}, 95(1):76--86, 1967.

\bibitem{groupexchange}
volcanrb.
\newblock Which pairs of groups are quotients of some group by isomorphic subgroups?
\newblock Mathematics Stack Exchange.
\newblock \url{https://math.stackexchange.com/q/4295186} (version: 2021-11-03).

\bibitem{wielandt1964finite}
H.~Wielandt.
\newblock {\em Finite Permutation Groups. {{Translated}} from the German by {{R}}. {{Bercov}}}.
\newblock Academic Paperbacks. {{Mathematics}}. Academic Press, 1964.

\end{thebibliography}

\end{document}